\def\startmodif{\begingroup\color{black}}
\def\stopmodif{\endgroup}
\newcommand\encircle[1]{%
  \tikz[baseline=(X.base)] 
    \node (X) [draw, shape=circle, inner sep=0] {\strut #1};}
\renewcommand{\le}{\leqslant}
\renewcommand{\ge}{\geqslant}
\renewcommand{\geq}{\geqslant}
\newcommand{\lang}{\left\langle}
\newcommand{\rang}{\right\rangle}
\newcommand{\R}{\mathbb{R}}
\newcommand{\@syst}[1]{%
\[ 
\left\{
\begin{aligned}
#1
\end{aligned}
\right.
\]
}
\newcommand{\@systn}[1]{%
\begin{equation}
\left\{
\begin{aligned}
#1
\end{aligned}
\right.
\end{equation}
}
\newcommand{\SO}{{SO}} %special orthogonal Lie group
\newcommand{\controlset}{{\mathcal{U}}} %the control set
\newcommand{\FF}{F} %first vector field of the affine control problem
\newcommand{\GG}{G} %second vector field of the affine control problem
\newcommand{\UU}{U} %the control variable
\newcommand{\qq}{X} %point on the manifold
\newcommand{\pp}{P} %covector to X
\newcommand{\px}{{p}} %covector to x, p1
\newcommand{\py}{{q}} %covector to y, p2
\newcommand{\umax}{u_{\max}}
\newcommand{\vmin}{v_{\min}}
\newcommand{\vmax}{v_{\max}}
\newcommand{\rmin}{r_{\min}}
\newcommand{\pzero}{\lambda}
\newcommand{\phiu}{\phi_u} %u-switching function
\newcommand{\phiv}{\phi_v} %v-switching function
\newcommand{\uv}{(u,v)}
\newcommand{\DA}{\Delta_{A}}
\newcommand{\DBu}{\Delta_{Bu}}
\newcommand{\DBv}{\Delta_{Bv}}
\newcommand{\Pdub}{\bf ($\mathbf{P_{\!0}}$)} %the different considered optimal problems
\newcommand{\Pone}{\bf ($\mathbf{P_{\!1}}$)}
\newcommand{\Ptwo}{\bf ($\mathbf{P_{\!2}}$)}
\newcommand{\Pgen}{\bf ($\mathbf{R}$)}
\newcommand{\qzero}{X_0}
\newcommand{\qfinal}{{X_f}}
\newcommand{\xt}{\tilde x}
\newcommand{\yt}{\tilde y}
\newcommand{\Xzerot}{{\tilde X_0}}
\newcommand{\qqt}{\tilde X}
\newcommand{\tsing}{{t_{\mathrm{sing}}}} %singular time
\newcommand{\tsw}{\tau} %switching time
\newcommand{\Interval}{t_0,t_1} %the time intervall
\newcommand{\target}{\mathcal{C}} %the target manifold
\newcommand{\UAV}{{UAV }}
\newcommand{\drone}{{drone }}
\newcommand{\cm}{m}
\newcommand{\cp}{p}
\newcommand{\cM}{M}
\newcommand{\cP}{P}
\newcommand{\cs}{s}
\newcommand{\traj}[1]{\gamma^{#1}}
\newcommand{\Tsw}[1]{T^{#1}}
\newcommand{\zphiu}[1]{\zeta^{#1}_{\phiu}}
\newcommand{\zphiv}[1]{\zeta^{#1}_{\phiv}}
\newcommand{\xtswc}[1]{\xt_{\mathrm{sw}}^{#1}}
\newcommand{\ytswc}[1]{\yt_{\mathrm{sw}}^{#1}}
\newcommand{\VM}{\p} %{\vmax}
\newcommand{\vm}{1} %{\vmin}
\newcommand{\VMvmVM}{\frac{\p+1}{\p}} %{\frac{\vmax+\vmin}{\vmax}}
\newcommand{\VMvmVMcos}{\VMvmVM\cos\alpha}
\newcommand{\asing}{\alpha_{\mathrm{sing}}}
\newcommand{\sqrtacos}{\sqrt{1-\left(\VMvmVM\cos\alpha\right)^2}}
\newcommand{\acos}{\arccos\left(\VMvmVM\cos\alpha\right)}
\newcommand{\asin}{\arcsin\left(\VMvmVM\cos\alpha\right)}
\newcommand{\atraj}[1]{\alpha^{#1}}
\newcommand{\VMCmvmCVMcos}{\frac{(\p^2-1)}{\p}\cos\alpha}
\newcommand{\p}{\eta} %notre parametre
\newtheorem{theorem}{Theorem}[section]
\newtheorem{lemma}[theorem]{Lemma}
\newtheorem{proposition}[theorem]{Proposition}
\theoremstyle{definition}
\newtheorem{definition}[theorem]{Definition}
\newtheorem{remark}[theorem]{Remark}
\theoremstyle{definition}
\numberwithin{equation}{section} % Pour une bonne numerotion
\title[Minimal time synthesis for a kinematic drone model]{Minimal time synthesis for a kinematic drone model}
\author[M.-A. Lagache, U. Serres and V. Andrieu]{}
\subjclass{93D15, 49J15, 34H05, 49K15, 93C15, 93C10, 34D23.}
 \keywords{Nonlinear control systems, optimal control, optimal synthesis, Pontryagin Maximum Principle, feedback stabilization.}
 \email{marc-aurele-lagache@etud-univ-tln.fr}
 \email{ulysse.serres@univ-lyon1.fr}
 \email{vandrieu@lagep.univ-lyon1.fr}
\thanks{The authors were partially supported by the Grant ANR-12-BS03-0005 LIMICOS of the ANR.}
\thanks{$^*$ Corresponding author.}
\begin{document}

%==========================
%
% Abstract
%
%==========================
\begin{abstract}
In this paper,
we consider a (rough) kinematic model for a UAV
flying at constant altitude moving forward with positive lower and upper bounded linear velocities and positive minimum turning radius.
For this model,
we consider the problem of minimizing the time travelled by the UAV starting from a
general configuration to connect a specified target being a fixed circle of minimum turning radius.
The time-optimal synthesis is presented as
a partition of the state space which defines a unique optimal path such that the target can be reached optimally.
\end{abstract}

% \date{}
\maketitle

% Enter the first author's name and address:
\centerline{\scshape Marc-Aur\`ele Lagache}
\medskip

% \centerline{
% \begin{minipage}{0.4\textwidth}
{\footnotesize
% please put the address of the first author
 \centerline{Universit\'e de Toulon,}
   \centerline{CNRS, LSIS, UMR 7296,}
   \centerline{F-83957 La Garde, France}
% }
% \end{minipage}
% \begin{minipage}{0.4\textwidth}
% {\footnotesize
\smallskip
\centerline{\&}
\smallskip
\centerline{Univ. Lyon,}
 \centerline{Universit\'e Claude Bernard Lyon 1,}
   \centerline{CNRS, LAGEP, UMR 5007}
   \centerline{43 bd du 11 novembre 1918, }
   \centerline{F-69100 Villeurbanne, France}
}
% \end{minipage}}
%

% Do not forget to end the {\footnotesize by the sign }

\medskip

\centerline{\scshape Ulysse Serres$^*$ and Vincent Andrieu}
\medskip
{\footnotesize
% please put the address of the second author
\centerline{Univ. Lyon,}
 \centerline{Universit\'e Claude Bernard Lyon 1,}
   \centerline{CNRS, LAGEP, UMR 5007}
   \centerline{43 bd du 11 novembre 1918, }
   \centerline{F-69100 Villeurbanne, France}
} % Do not forget to end the {\footnotesize by the sign }

\bigskip

% The name of the associate editor will be entered by an editorial staff
% "Communicated by the associate editor name" is not needed for special issue.
 \centerline{(Communicated by the associate editor name)}

%%%%%%% BEGIN OLD LAYOUT %%%%%%%%%%
% \noindent
% \textbf{Keywords:}
% Nonlinear control sytems, optimal control, optimal synthesis, feedback stabilization

% \medskip
% \noindent %http://www.ams.org/msc/msc2010.html
%\textbf{Mathematics Subject Classifications (2010):} % <- faire un choix 
% \textbf{AMS classification:} 93D15, 49J15, 34H05, 49K15, 93C15, 93C10, 34D23
%%%%%%% END OLD LAYOUT %%%%%%%%%%

%-----------------------------------------------------------------------------------
%-----------------------------------------------------------------------------------
%
% Introduction
%
%-----------------------------------------------------------------------------------
%-----------------------------------------------------------------------------------
\section{Introduction}\label{sec:Intro}

The purpose of this study is to determine the fastest way (in time) to steer
a kinematic \UAV (or \drone\!\!) flying at a constant altitude from any starting point to
a fixed horizontal circle of minimum turning radius.

The problem is only described from a kinematic point of view.
In particular, we do not take into account the inertia of the \drone.
We consider that the \drone velocities are controlled parameters.
In consequence, they are allowed to vary arbitrarily fast.

From the kinematic point of view, a rough \drone that flies at a constant altitude
is governed by the standard Dubins equations
(see e.g. \cite{BalluchiCDC2000}, \cite{Dubins1957}):
\begin{equation}\label{eq:dubins}
  \left\{
  \begin{aligned}
      \dot{x} &= v\cos\theta \\
      \dot{y} &= v\sin\theta \\
      \dot{\theta} &= u,
  \end{aligned}
  \right.
\end{equation}
with
$(x,y,\theta) \in{\R}^2\times {\mathbb{S}}^1$ being the state
(where $(x,y) \in \R^2$ is the UAV's coordinates in the constant altitude plane,
and $\theta$ the yaw angle),
and
$u\in[-\umax,\umax]$,
$v\in[\vmin,\vmax]$ being the control variables.
Note that the yaw angle $\theta$ is the angle between 
the aircraft direction and the $x$-axis.

\startmodif
In this paper, only system \eqref{eq:dubins} is studied. Despite the fact that our motivation is UAVs, one could apply our results to other control problems modelled by system \eqref{eq:dubins}.
\stopmodif

These equations express that the \drone moves on a perfect plane (perfect constant altitude)
in the direction of its velocity vector, and is able to turn right and left.

We assume that the controls on the \drone kinematics are
its angular velocity $u$ and its linear velocity $v$.

Moreover, we make the assumptions 
that the linear velocity $v$ has a positive lower bound $\vmin$
and a positive upper bound $\vmax$ and that
the time derivative $u$ of the \drone yaw angle is 
constrained by an upper positive bound $\umax$.

The above assumptions imply in particular that
no stationary or quasi-stationary flights are allowed
and
that
the \drone is kinematically restricted by its minimum turning radius $\rmin={\vmin}/{\umax}>0$.

A similar problem with a constant linear velocity has already been addressed in \cite{MaillotBoscainGauthierSerres2015}.
The purpose of this paper is to study the influence of a non-constant linear velocity. \startmodif
A preliminary version of this work as been published in \cite{LagacheSerresAndrieuCDC15}.
\stopmodif

%-----------------------------------------------------------------------------------
%-----------------------------------------------------------------------------------
%
% Minimum time problem under consideration
%
%-----------------------------------------------------------------------------------
%-----------------------------------------------------------------------------------
\section{Minimum time problem under consideration}\label{sec:drone-problem}

%-----------------------------------------------------------------------------------
%
% Optimal control problem
%
%-----------------------------------------------------------------------------------
\subsection{Optimal control problem}

We aim to steer a \UAV driven by system (\ref{eq:dubins}) in minimum time
from any given initial position point to the target manifold $\target$ which is defined to be the counterclockwise-oriented circular trajectory of minimum turning radius centered at the origin.
In the $(x,y,\theta)$-coordinates,  $\target$ is given by
\[
\target
=\left\{ \left(x,y,\theta\right) \mid x=\rmin\sin\theta,\, y=-\rmin\cos\theta\right\}.
\]
More precisely, we consider the following optimal control problem:
\begin{itemize}
  \item[\textrm{\Pdub}]
  For every $(x_0,y_0,\theta_0)\in\R^2\times \mathbb{S}^1$ find a pair trajectory-control  joining $(x_0,y_0,\theta_0)$ to $\target$, which is  time-optimal for the control system (\ref{eq:dubins}).
\end{itemize}

%-----------------------------------------------------------------------------------
%
% Existence of solutions
%
%-----------------------------------------------------------------------------------
\subsection{Existence of solutions}

The following two propositions are well-known and stated without proof (see, e.g. \cite{Agrachev-Sachkov-2004}).
\begin{proposition}[\startmodif
Controllability\stopmodif]
System (\ref{eq:dubins}) is controllable provided that $u\in[-\umax,\umax]$ and $v\in[\vmin,\vmax]$ for any choice of $0<\umax\le+\infty$ and $0<\vmin\le\vmax\le+\infty$.
\end{proposition}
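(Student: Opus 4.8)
The plan is to establish controllability of system \eqref{eq:dubins} by showing that the accessible set from any point is all of $\R^2\times\mathbb{S}^1$, which for this kind of driftless-like analytic system reduces to verifying the Lie algebra rank condition (Chow--Rashevskii theorem) together with a recurrence/accessibility argument to upgrade bracket-generation to full controllability. First I would rewrite the dynamics in the form $\dot q = v\,g_1(q) + u\,g_2(q)$ where $q=(x,y,\theta)$, $g_1(q) = (\cos\theta,\sin\theta,0)^{\mathrm{T}}$ is the forward-motion field and $g_2(q) = (0,0,1)^{\mathrm{T}}$ is the rotation field. The controls lie in the box $u\in[-\umax,\umax]$, $v\in[\vmin,\vmax]$, and since the lower bound $\vmin$ may be strictly positive this is not a symmetric driftless system, which is precisely the point I flag below.

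The key computational step is to exhibit bracket-generation. I would compute the Lie bracket
\[
[g_2,g_1](q) = \frac{\partial g_1}{\partial\theta}\,g_2 - \frac{\partial g_2}{\partial\theta}\,g_1 = (-\sin\theta,\cos\theta,0)^{\mathrm{T}},
\]
and observe that $g_1$, $g_2$, and $[g_2,g_1]$ are linearly independent at every $q$ (their determinant equals $1$), so $\dim\,\mathrm{Lie}_q(g_1,g_2)=3$ everywhere and the rank condition holds on all of $\R^2\times\mathbb{S}^1$. If the velocity constraints were symmetric around zero, Chow--Rashevskii would immediately give controllability; the content of allowing $0<\vmin\le\vmax\le+\infty$ (and likewise $\umax\le+\infty$) is that the admissible velocity ranges are nonempty intervals, so the directions $\pm g_2$ and a forward cone around $g_1$ are all available as admissible velocities and the bracket-generating property still yields a full-dimensional accessible set.

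The main obstacle is the positive lower bound $v\ge\vmin>0$: because one cannot stop or reverse, the system has an effective drift and one must argue controllability rather than mere accessibility. The standard remedy is to note that the Dubins vector field $g_1$ together with $\pm g_2$ generates motions that are recurrent on the compact factor $\mathbb{S}^1$, and that turning fully around (integrating $g_2$ over a full $2\pi$) returns $\theta$ to its starting value while translating the planar position; combining forward arcs with turns of both signs, one produces closed loops in the $(x,y)$-plane, which means the positive orbit is symmetric enough that the forward accessible set coincides with the full orbit. Since the orbit is open (by the rank condition) and the state manifold $\R^2\times\mathbb{S}^1$ is connected, the forward accessible set from any point is the whole space. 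Because the result is quoted as well known and cited to \cite{Agrachev-Sachkov-2004}, I would present this as a concise verification of the rank condition plus a one-line recurrence argument rather than a from-scratch proof.
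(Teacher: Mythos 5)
The paper gives you nothing to compare against here: the proposition is explicitly ``stated without proof,'' with a pointer to \cite{Agrachev-Sachkov-2004}, so your sketch supplies what the authors delegated to the literature. Your route is the standard one, and its two ingredients are the right ones: the rank condition (your bracket computation is correct; $g_1$, $g_2$ and $[g_2,g_1]=(-\sin\theta,\cos\theta,0)$ span $\R^3$ at every point) plus an extra argument to overcome the fact that $v\ge\vmin>0$ makes the system non-symmetric, so Chow--Rashevskii alone gives only accessibility. However, the recurrence step is misstated and should be repaired: you claim that integrating the rotation over a full turn ``returns $\theta$ to its starting value while translating the planar position.'' The pure rotation field $g_2$ is not admissible (it would require $v=0$), and for any admissible constant control $(u,v)$ with $u\neq0$ the trajectory is a circle of radius $v/|u|$, so after one period $2\pi/|u|$ \emph{both} $\theta$ and $(x,y)$ return to their initial values --- there is no net translation. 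This is in fact exactly the property you need, stated correctly: every admissible constant-control flow with $u\neq0$ is periodic (Poisson stable), hence its inverse is realized by forward-time motion, and the forward attainable set built from such arcs is the orbit of the \emph{group} generated by these flows. Since already the two fields $\vmin g_1\pm\umax g_2$ (or any finite nonzero value of $u$ when $\umax=+\infty$) bracket-generate the full Lie algebra, the Orbit Theorem makes this orbit open; orbits partition the connected manifold $\R^2\times\mathbb{S}^1$, so the orbit is everything. Note also that your parenthetical ``the forward accessible set is open by the rank condition'' is not a valid step in general for systems with drift (Krener's theorem only gives a dense interior); it is the periodicity-of-loops argument that makes attainable set and orbit coincide, and openness then comes for free. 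With that substitution your proof is complete and is the same mechanism invoked for such statements in \cite{Agrachev-Sachkov-2004}.
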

Also, Filippov's theorem %(see, e.g. \cite{Agrachev-Sachkov-2004)} 
gives.
\begin{proposition}[Existence of minimizers]
For any point $(x_0,y_0,\theta_0)\in\R^2\times \mathbb{S}^1$, there exists a time-optimal trajectory
joining $(x_0,y_0,\theta_0)$ to $\target$.
\end{proposition}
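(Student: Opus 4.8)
The plan is to verify the hypotheses of Filippov's existence theorem (see, e.g., \cite{Agrachev-Sachkov-2004}): the admissible velocity set is compact and convex at every state, the right-hand side satisfies a sub-linear growth condition, the target is closed, and at least one admissible trajectory reaches the target in finite time. First I would fix the state $q=(x,y,\theta)$ and write the right-hand side of \eqref{eq:dubins} as $f(q,u,v)=(v\cos\theta,\,v\sin\theta,\,u)$, defined on the control set
\[
\controlset=[-\umax,\umax]\times[\vmin,\vmax],
\]
which is compact and convex.

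Next I would check the Filippov convexity condition on the velocity set $F(q)=\{f(q,u,v):(u,v)\in\controlset\}$. Since $f$ is affine (indeed linear) in $(u,v)$ at fixed $q$, the set $F(q)$ is the image of the convex set $\controlset$ under a linear map, hence convex, and it is compact as the image of a compact set under a continuous map. Concretely,
\[
F(q)=\{(v\cos\theta,v\sin\theta):v\in[\vmin,\vmax]\}\times[-\umax,\umax]
\]
is a planar rectangle sitting in $\R^3$, manifestly convex. The growth condition is then immediate because the dynamics are \emph{globally} bounded: $|\dot x|,|\dot y|\le\vmax$ and $|\dot\theta|\le\umax$, so no trajectory can blow up in finite time and, over any bounded interval $[0,T]$, every admissible trajectory issued from $q_0=(x_0,y_0,\theta_0)$ remains in the compact set $\{\,|x-x_0|\le\vmax T,\ |y-y_0|\le\vmax T\,\}\times\mathbb{S}^1$.

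By the preceding controllability proposition, $q_0$ can be joined to $\target$ by at least one admissible trajectory, so the set of transfer times is nonempty and its infimum $T^\ast$ is finite. I would then take a minimizing sequence of trajectory--control pairs with transfer times $T_n\downarrow T^\ast$: the uniform velocity bounds confine all of them to a single compact set and furnish equicontinuity, so an Arzel\`a--Ascoli argument combined with Filippov's closure lemma (which is exactly where the convexity of $F(q)$ is used) extracts a subsequence converging uniformly to an admissible trajectory reaching $\target$ in time $T^\ast$; here one uses that $\target$, being a circle, is closed. This limit is the sought time-optimal trajectory.

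In truth there is no genuine obstruction in this problem, which is why the statement is quoted without proof. The only point requiring care is the passage to the limit, and both of its potential failure modes are ruled out by the two structural features isolated above: global boundedness of the speeds prevents escape to infinity and yields compactness/equicontinuity, while convexity of $F(q)$ guarantees that the limiting velocity is still admissible via the closure lemma. Thus the result is the textbook Filippov theorem specialized to a particularly well-behaved, globally bounded right-hand side with compact convex control set and closed target.
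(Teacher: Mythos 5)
Your proposal is correct and follows exactly the route the paper takes: the paper states this proposition without proof as a direct consequence of Filippov's theorem (citing \cite{Agrachev-Sachkov-2004}), relying on the compactness and convexity of the velocity sets, the boundedness of the dynamics, and controllability to ensure the target is reachable. Your write-up simply fills in the standard hypotheses verification and the minimizing-sequence argument behind that citation, so there is no substantive difference.
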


%-----------------------------------------------------------------------------------
%
% Reduction of the system
%
%-----------------------------------------------------------------------------------
\subsection{Dimension reduction of the system}

To solve problem \textrm{\Pdub} it is convenient to work with a reduced system in dimension two.
Indeed, in dimension two,
a complete theory to build time-optimal syntheses exists and will be described in Section \ref{sec:theory}.

Let the control set be defined by $\controlset = [-\umax,\umax]\times[\vmin,\vmax]\subset\R^2$.

Also,
we introduce the UAV-based coordinates $\left(\xt,\yt,\theta\right)$ with $\xt$ and $\yt$ defined by the transformation (in $\SO(2)$):
\begin{equation*}
  \begin{pmatrix}
    \xt \\
    \yt
  \end{pmatrix}
  =
  \begin{pmatrix}
    \cos\theta & \,\sin\theta \\
    -\sin\theta & \,\cos\theta \\
  \end{pmatrix}
  \begin{pmatrix}
    x \\
    y
  \end{pmatrix}.
\end{equation*}

The main advantage of this \UAV\!\!-based coordinate system is that it decouples the variable $\theta$
and projects the final manifold $\target$ to the point $\Xzerot=(0,-\rmin)$.
Therefore,
the original time-optimal control problem can be equivalently reformulated in the reduced state space $(\xt,\yt)$ as the following minimum-time problem:

\begin{itemize}
  \item[{\textrm{\Pone}}]
  For every $(\xt_0,\yt_0)\in\R^2$ find a pair trajectory-control joining $(\xt_0,\yt_0)$ to $\Xzerot=\left(0,-\rmin\right)$, which is time-optimal for the control system
  \begin{equation}\label{eq:dubinstilde}
    \left\{
    \begin{aligned}
      \dot{\xt} &= v+u\yt\\
      \dot{\yt} &= -u\xt
    \end{aligned}
    \right.,
    \quad (u,v)\in\controlset.
  \end{equation}
\end{itemize}

\begin{remark}
One can read, in equation (\ref{eq:dubinstilde})
that $(0,-\rmin)$ is an equilibrium point
corresponding to the control values $u=\umax$ and $v=\vmin$.
\end{remark}

The family of all solutions to problem \textrm{\Pone} for $(\xt_0,\yt_0)\in\R^2$ is called the \textit{time-optimal synthesis}.
%\textit{time-optimal stabilizing synthesis}
%(see \cite{PiccoliSussmann_2000_SIAM}).

Following a standard approach (see \cite{BoscainPiccoli2004_BOOK}) for time-optimal control syntheses, it is convenient to rephrase problem {\textrm{\Pone}} as an equivalent problem
backward in time.
Hence, changing the sign of the dynamics, the following equivalent time-optimal problem is considered:
\begin{itemize}
  \item[{\textrm{\Ptwo}}]
  For every $\left(\xt_{f},\yt_{f}\right)\in\R^2$ find a pair trajectory-control joining $\Xzerot=\left(0,-\rmin\right)$ to $\left(\xt_{f},\yt_{f}\right)$, which is  time-optimal for the control system
  \begin{equation}\label{eq:system-reduced}
    \left\{
    \begin{aligned}
    \dot{\xt} &= -v-u\yt\\
    \dot{\yt} &= u\xt
    \end{aligned}
    \right.,
    \quad (u,v)\in\controlset.
  \end{equation}
\end{itemize}
Once problem \textrm{\Ptwo} is solved, then the time-optimal
%%%POINT A ECLAIRCIR: le point est bien stabilisé mais a priori pas au sens Lyapunov!
%stabilizing 
synthesis (corresponding to problem \textrm{\Pone}) is obtained straightforwardly following the travelled trajectories backward.
\begin{remark}\label{rem:reparametrization}
Note that
up to a dilation in the $(x,y)$-plane
and a dilation of time (a time-reparametrization with constant derivative),
we may assume that
$[-\umax,\umax]\times[\vmin,\vmax]=[-1,1]\times[1,\p]$ ($\p=\vmax/\vmin$).
This normalization is used to simplify the treatment in Sections \ref{sec:applicationI} and \ref{sec:applicationII}.
\end{remark}

%-----------------------------------------------------------------------------------
%-----------------------------------------------------------------------------------
%
% Time-optimal synthesis on $\R^2$
%
%-----------------------------------------------------------------------------------
%-----------------------------------------------------------------------------------
\section{Time-optimal synthesis on \texorpdfstring{$\R^2$}{}}\label{sec:theory}

In this section, following the same ideas as those developed by Boscain, Bressan, Piccoli and Sussmann
in \cite{BoscainPiccoli2001,BoscainPiccoli2004_BOOK,BressanPiccoli1998,Piccoli1996,Sussmann1987_REG-SYN}
for optimal syntheses on two-dimensional manifolds for single input control systems,
we introduce important definitions and develop basic facts about optimal syntheses on $\R^2$
for control-affine systems 
% of the form (\ref{eq:general-system}) 
with two bounded controls
(which are different from those studied \startmodif
in \cite{BoscainChambrionCharlot2005}
\stopmodif and \cite{Boscain2014b}).
This part is widely inspired by the book \cite{BoscainPiccoli2004_BOOK} and extends some of its results.

\startmodif
% Unless explicitly specified,the definitions and results given in the next Subsections \ref{sec:PMP}, \ref{sec:basic-def} and \ref{sec:abnormal-traj} are given in $\R^n$.
The definitions and results given in Subsections \ref{sec:PMP}, \ref{sec:basic-def} and \ref{sec:abnormal-traj} are valid in $\R^n$ for any $n$. However, starting from Subsection \ref{sec:singular-traj} results are valid and make sense only for $n=2$.
\stopmodif
%-----------------------------------------------------------------------------------
%
% Pontryagin Maximum Principle
%
%-----------------------------------------------------------------------------------
\subsection{Pontryagin Maximum Principle}\label{sec:PMP}

Let $\FF$ and $\GG$ be two smooth and complete vector fields on $\R^n$.
Define the control variable $\UU=(u,v)$ and the control set $\controlset=[-\umax,\umax]\times[\vmin,\vmax]\subset\R^2$, where $\umax$, $\vmin$, $\vmax$ are assumed to be positive.
Consider the following general control-affine time-optimal problem.
\begin{itemize}
  \item[{\textrm{\Pgen}}]
  For every $\qzero$ and $\qfinal$ in $\R^n$ find the pair trajectory-control joining $\qzero$ to $\qfinal$, which is  time-optimal for the control system
  \begin{equation}\label{eq:general-system}
    \dot \qq = v\FF(\qq) + u \GG(\qq),\quad
    \qq \in \R^n,\quad
    (u,v) \in \controlset.
  \end{equation}
\end{itemize}
\begin{definition}[admissible control/trajectory]
An \textit{admissible control} for system (\ref{eq:general-system}) is an
essentially bounded function $\UU(\cdot): [\Interval]\to \controlset$.
An \textit{admissible trajectory} is a solution to (\ref{eq:general-system})
such that
$\dot
\qq(t)=v(t)\FF(\qq(t))+u(t)\GG(\qq(t))$
a.e. on $[\Interval]$ for some  admissible control $\UU(\cdot)$. 
\end{definition}
Thanks to the compactness of the set of controls, the convexity of the set of velocities,
and the completeness of the vector fields,
Filippov’s theorem (see, for instance, \cite{Agrachev-Sachkov-2004}) yields:
\begin{proposition}
  For any pair of points in $\R^n$, there exists a time-optimal trajectory joining them.
\end{proposition}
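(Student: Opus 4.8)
The statement is an application of Filippov's existence theorem, so the plan is to verify its hypotheses for system \eqref{eq:general-system} and then pass to the limit along a minimizing sequence. The one structural feature to exploit is that the right-hand side $v\FF(\qq)+u\GG(\qq)$ is \emph{affine} in the control $\UU=(u,v)$, so the velocity set
\[
  \mathcal{F}(\qq)=\{\,v\FF(\qq)+u\GG(\qq) : (u,v)\in\controlset\,\}
\]
is the image of the convex compact set $\controlset$ under an affine map, hence convex and compact for every $\qq\in\R^n$. This convexity is the crucial ingredient.

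First, assuming $\qzero$ and $\qfinal$ are joinable, I would set $T^\ast=\inf\{T\ge 0 : \text{some admissible trajectory steers } \qzero \text{ to } \qfinal \text{ in time } T\}<+\infty$ and take a minimizing sequence of trajectory--control pairs $(\gamma_k,\UU_k)$ defined on $[0,T_k]$ with $\gamma_k(0)=\qzero$, $\gamma_k(T_k)=\qfinal$ and $T_k\downarrow T^\ast$. I would then confine all these trajectories to a single compact set: since the controls take values in the compact set $\controlset$, the time horizons are bounded by $T^\ast+1$, and $\FF,\GG$ are complete, the attainable set up to time $T^\ast+1$ is bounded, so the $\gamma_k$ remain in a fixed compact $K\subset\R^n$. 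On $K$ the fields $\FF,\GG$ are bounded, whence the $\gamma_k$ are equi-Lipschitz.

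Next I would extract limits. By Arzel\`a--Ascoli a subsequence of $\gamma_k$ converges uniformly to some continuous $\gamma$, and by weak-$\ast$ compactness of $L^\infty([0,T^\ast+1];\controlset)$ a further subsequence of the controls satisfies $\UU_k\rightharpoonup\UU$. Because $\controlset$ is closed and convex, the weak-$\ast$ limit $\UU$ still takes values in $\controlset$ almost everywhere, so $\UU$ is admissible. The delicate point is to identify the limit dynamics, namely to show $\dot\gamma=v\FF(\gamma)+u\GG(\gamma)$ a.e. Here the affine dependence on the control is decisive: combining the uniform convergence $\gamma_k\to\gamma$ (which passes through the smooth coefficients $\FF,\GG$) with the weak-$\ast$ convergence $\UU_k\rightharpoonup\UU$ lets me pass to the limit in the integral form $\gamma_k(t)=\qzero+\int_0^t\bigl(v_k\FF(\gamma_k)+u_k\GG(\gamma_k)\bigr)\,ds$; without convexity of $\controlset$ the weak limit could escape the admissible velocity set, which is exactly what convexity prevents.

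Finally, I would check the constraints: uniform convergence together with $T_k\to T^\ast$ and the equi-Lipschitz bounds gives $\gamma(0)=\qzero$ and $\gamma(T^\ast)=\qfinal$, so $\gamma$ is an admissible trajectory joining the two points in time $T^\ast$, hence time-optimal. The main obstacle is the limit-passage described in the previous paragraph; all the remaining steps are the routine compactness bookkeeping that the completeness and compactness hypotheses are precisely designed to supply.
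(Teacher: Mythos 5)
Your proposal is correct and is essentially the paper's own approach: the paper states this proposition without proof by invoking Filippov's theorem, citing exactly the three ingredients you use (compactness of $\controlset$, convexity of the velocity sets $\{v\FF(\qq)+u\GG(\qq) : (u,v)\in\controlset\}$, and completeness of $\FF$ and $\GG$), and your minimizing-sequence argument via Arzel\`a--Ascoli and weak-$\ast$ compactness is precisely the standard proof of that theorem. Your explicit caveat that $\qzero$ and $\qfinal$ must be joinable is a point the paper leaves implicit, and it is indeed needed for the infimum $T^\ast$ to be taken over a nonempty set.
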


The main tool to compute time-optimal trajectories is the Pontryagin Maximum Principle (PMP).
A general version of PMP can be found in \cite{Agrachev-Sachkov-2004}.
The following theorem is a version of PMP for control systems of the form (\ref{eq:general-system})
that we state in our own context only.

\begin{theorem}[PMP]\label{thm:PMP}
  Consider the control system (\ref{eq:general-system}).
  For every $(\pp,\qq,\UU)\in \R^n\times\R^n\times \controlset$, define the Hamiltonian function
  \begin{equation}
    H(\pp,\qq,\UU)=v\left\langle \pp,F(\qq)\right\rangle +u\left\langle \pp,G(\qq)\right\rangle .\label{eq:hamiltonian}
  \end{equation}
  Let $\UU(\cdot)$ be an admissible time-optimal control defined on $[\Interval]$ and let $\qq(\cdot)$ be the corresponding trajectory.
  Then there exist a never vanishing Lipschitz covector (or adjoint vector) $\pp(\cdot):t\in[\Interval]\mapsto \pp(t)\in \R^n$ and a non negative constant $\lambda$ such that for almost all $t\in[\Interval]$:
  \begin{enumerate}[\rm i.]
    \item\label{PMP0}
      $\dot \qq(t)
      =\displaystyle{\frac{\partial H}{\partial \pp}(\pp(t),\qq(t),\UU(t))}$,
      \vspace{0,2em}
    \item\label{PMP1}
      $\dot \pp(t)
      =-\displaystyle{\frac{\partial H}{\partial \qq}(\pp(t),\qq(t),\UU(t))}$,
      \vspace{0,2em}
    \item\label{PMP2}
      $H(\pp(t),\qq(t),\UU(t))\displaystyle=\max_{W\in\,\controlset}H(\pp(t),\qq(t),W)$,
      \vspace{0,2em}
    \item\label{PMP4}
      $H(\pp(t),\qq(t),\UU(t))=\lambda\ge0$.
      \vspace{0,2em}
  \end{enumerate}
\end{theorem}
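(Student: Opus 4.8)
The plan is to obtain this statement as the specialization of the general minimum-time Pontryagin Maximum Principle (see \cite{Agrachev-Sachkov-2004}) to the control-affine system \eqref{eq:general-system}, translating its abstract conclusion into the four explicit conditions (i)--(iv). Since problem \textrm{\Pgen} has a free terminal time and fixed endpoints $\qzero,\qfinal$, the natural object to introduce is the full Pontryagin Hamiltonian
\begin{equation*}
  \mathcal{H}(\pp,\qq,\UU) = v\ps{\pp}{\FF(\qq)} + u\ps{\pp}{\GG(\qq)} - \lambda_0 = H(\pp,\qq,\UU) - \lambda_0,
\end{equation*}
where $\lambda_0\ge0$ is the multiplier associated with the cost $\int 1\,dt = t_1-t_0$ to be minimized. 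The general principle then furnishes a nontrivial pair $(\lambda_0,\pp(\cdot))$, with $\pp(\cdot)$ Lipschitz, satisfying the maximized Hamiltonian system, which I would unwind item by item.

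First, conditions (i) and (ii) are immediate: the constant $-\lambda_0$ contributes nothing to the partial derivatives, so the canonical equations $\dot\qq = \partial\mathcal{H}/\partial\pp$ and $\dot\pp = -\partial\mathcal{H}/\partial\qq$ produced by the general PMP coincide verbatim with those written for $H$ in (i)--(ii). Likewise (iii) is the pointwise maximization of the general principle: for almost every $t$ the optimal control maximizes $W\mapsto\mathcal{H}(\pp(t),\qq(t),W)$ over $\controlset$, and removing the control-independent term $\lambda_0$ shows this is the same as maximizing $W\mapsto H(\pp(t),\qq(t),W)$. Since $H$ is affine in $\UU=(u,v)$ and $\controlset$ is a rectangle, the maximum is attained, so (iii) is well posed.

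The crux is condition (iv), where two facts must be combined. The first is constancy: as the vector fields $\FF,\GG$ are time-independent the system is autonomous, and the standard lemma on the conservation of the Hamiltonian along extremals gives that $t\mapsto\mathcal{H}(\pp(t),\qq(t),\UU(t))$ is constant on $[\Interval]$. The second is the free-time transversality condition, which for a free terminal time forces that constant value to be zero; hence $\mathcal{H}\equiv0$, that is $H(\pp(t),\qq(t),\UU(t))\equiv\lambda_0$, and setting $\lambda:=\lambda_0\ge0$ yields (iv). I expect this to be the main obstacle, since it requires invoking the free-terminal-time transversality and the constancy lemma in precisely the form attached to the cited general principle; the sign $\lambda\ge0$ is exactly the one produced by \emph{minimizing} (rather than maximizing) elapsed time.

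It remains to justify that $\pp(\cdot)$ never vanishes, and here I would exploit the affine structure. The adjoint equation (ii) is linear and homogeneous in $\pp$, so by uniqueness for linear ordinary differential equations its solution either never vanishes or is identically zero on $[\Interval]$. If $\pp\equiv0$ then $H\equiv0$, whence (iv) forces $\lambda=\lambda_0=0$ and the pair $(\lambda_0,\pp)$ is trivial, contradicting the nontriviality guaranteed by the general PMP. Therefore $\pp(t)\neq0$ for every $t\in[\Interval]$, which completes the proof.
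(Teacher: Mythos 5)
Your proposal is correct and takes essentially the same route as the paper, which states this theorem without proof as a specialization of the general PMP in \cite{Agrachev-Sachkov-2004}: you simply make the specialization explicit, including the two points the paper leaves implicit, namely that autonomy plus the free-terminal-time condition force $H\equiv\lambda\ge0$, and that the adjoint equation being linear and homogeneous in $\pp$ upgrades the nontriviality of $(\lambda,\pp(\cdot))$ to a never-vanishing covector. No gaps.
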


A pair trajectory-control $(\qq(\cdot),\UU(\cdot))$ (resp. a triplet $(\pp(\cdot),\qq(\cdot),\UU(\cdot))$) satisfying the conditions given by the PMP is said to be an \textit{extremal trajectory} (resp.  an \textit{extremal}).
An extremal  corresponding to $\pzero=0$ is said to be \textit{abnormal}, otherwise we call it \textit{normal}.

\begin{remark}\label{rem:h-cont}
Notice that,
up to change $\UU(\cdot)$ on a set of measure zero,
an extremal control can always be chosen so that the function
$t\mapsto H(\pp(t),\qq(t),\UU(t))$ is continuous.
Consequently,
we may always assume
(without loss of generality)
that condition \ref{PMP4} of PMP is valid everywhere.
\end{remark}

%-----------------------------------------------------------------------------------
%
% Basic definitions
%
%-----------------------------------------------------------------------------------
\subsection{Basic definitions}\label{sec:basic-def}

\begin{definition}[Switching functions]
  Let $\qq(\cdot)$ be an extremal trajectory.
  The corresponding $u$- and $v$-switching functions
  are (the differentiable functions) 
  defined respectively as
  \[
    \phiu(t)=\left\langle \pp(t),G\left(\qq(t)\right)\right\rangle 
    \textrm{~and~}
    \phiv(t)=\left\langle \pp(t),F\left(\qq(t)\right)\right\rangle.
  \]
\end{definition}
Switching functions are very important since their analysis determine when the corresponding control may change.
Unlike in the single input case, the switching functions are differentiable but not necessarily $C^1$.

In the following three definitions (\startmodif
Definitions
\stopmodif \ref{def:bang}-\ref{def:switchingtime}), $\qq(\cdot)$ is an extremal trajectory defined on the time interval $[\Interval]$ and $\UU(\cdot):[\Interval]\to\controlset$ is the corresponding control.
\begin{definition}[Bang]\label{def:bang}
  $\UU(\cdot)$ is said to be a $u$-\textit{bang} (resp. $v$-\textit{bang}) control if, for a.e. $t\in[\Interval]$, $u(t)=-\umax$ (or $u(t)=\umax$) (resp. $v(t)=\vmin$ (or $v(t)=\vmax$)).
  $\UU(\cdot)$ is a \textit{bang} control if, for a.e. $t$ in $[\Interval]$, it is $u$-\textit{bang} and $v$-\textit{bang}. 
  A finite concatenation of bang controls is called a \textit{bang-bang} control.
\end{definition}

\begin{definition}[Singular]\label{def:singular}
  We say that $\UU(\cdot)$ is a \emph{$u$-singular} control (resp. \emph{$v$-singular}) if the corresponding switching function $\phiu$ (resp. $\phiv$) vanishes identically on $[\Interval]$. If $\phiu$ and $\phiv$ both vanish identically on $[\Interval]$, we say that $\UU(\cdot)$ is \emph{totally singular}.
\end{definition}

\begin{definition}[Switching times]\label{def:switchingtime}
A \textit{$u$-switching time} of $\UU(\cdot)$
is a time $\tsw\in (t_0,t_1)$ such that,
for a sufficiently small $\varepsilon>0$,
$u(t)=\umax$ for a.e. $t \in (\tsw-\epsilon,\tsw]$ and
$u(t)=-\umax$ for a.e. $t\in(\tsw,\tsw+\epsilon]$
or vice-versa.
A $v$-switching time is defined similarly.
A $\uv$-switching time is a time
that is both a $u$- and a $v$-switching time.
If $\tsw$ is a switching time,
the corresponding point $\qq(\tsw)$ on 
the trajectory $\qq(\cdot)$ is called a \textit{switching point}.
\end{definition}

%-----------------------------------------------------------------------------------
%
% Abnormal trajectories
%
%-----------------------------------------------------------------------------------
\subsection{Abnormal trajectories}\label{sec:abnormal-traj}

The following lemma gives some information on the nature of abnormal extremals of system (\ref{eq:general-system}).

\begin{lemma}\label{lem:Abnormals_form}
Let $\gamma(\cdot)=\left(\pp(\cdot),\qq(\cdot),\UU(\cdot)\right)$ be an abnormal extremal defined on $[\Interval]$.
Assume that $\gamma(\cdot)$ is never totally singular in restriction to any subinterval of $[\Interval]$.
Then $\gamma(\cdot)$ is bang with $v(\cdot)=\vmin$ on an open dense subset of $[\Interval]$.
\end{lemma}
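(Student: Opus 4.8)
The plan is to combine abnormality ($\pzero=0$) with the maximization condition \ref{PMP2} of Theorem \ref{thm:PMP}, exploiting crucially that the linear velocity is bounded below by $\vmin>0$. First, since $\gamma(\cdot)$ is abnormal, condition \ref{PMP4} gives $H(\pp(t),\qq(t),\UU(t))=0$, and by Remark \ref{rem:h-cont} I may take this to hold for every $t\in[\Interval]$. As $H=v\phiv+u\phiu$ is affine in $\UU$ and $\controlset=[-\umax,\umax]\times[\vmin,\vmax]$ is a rectangle, the maximization \ref{PMP2} decouples into independent maximizations in $u$ and in $v$, giving for every $t$
\[
0=\max_{W\in\controlset}H=\umax\,|\phiu(t)|+\max_{v\in[\vmin,\vmax]}v\,\phiv(t),
\]
with optimal controls $u(t)=\umax\,\mathrm{sign}\,\phiu(t)$ on $\{\phiu\neq0\}$ and $v(t)$ attaining $\max_{v}v\phiv(t)$. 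The equality propagates from almost-everywhere to everywhere because $t\mapsto\max_WH$ is continuous.

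Next I would read off the sign of $\phiv$. Since $\umax|\phiu(t)|\ge0$, the identity forces $\max_{v}v\phiv(t)\le0$; were $\phiv(t)>0$ this maximum would equal $\vmax\phiv(t)>0$, a contradiction. Hence $\phiv(t)\le0$ for all $t$, and consequently the maximizing linear velocity is $v(t)=\vmin$ wherever $\phiv(t)<0$ — this is exactly where $\vmin>0$ is used, the lower endpoint of a strictly positive interval being selected. Substituting $\max_vv\phiv(t)=\vmin\phiv(t)$ back into the displayed identity yields the pointwise proportionality
\[
\umax\,|\phiu(t)|=-\vmin\,\phiv(t)=\vmin\,|\phiv(t)|,\qquad t\in[\Interval].
\]

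This proportionality is the crux: it shows that the two continuous switching functions vanish simultaneously, so that $Z:=\{t:\phiv(t)=0\}=\{t:\phiu(t)=0\}$ is a single closed set. If $Z$ had nonempty interior, then both $\phiu$ and $\phiv$ would vanish identically on a subinterval, i.e.\ $\gamma(\cdot)$ would be totally singular there (Definition \ref{def:singular}), contradicting the hypothesis; hence $Z$ has empty interior and its complement $O:=[\Interval]\setminus Z$ is open and dense. On $O$ one has $\phiv<0$ and $\phiu\neq0$, so for a.e.\ $t\in O$ the maximization forces $v(t)=\vmin$ and $u(t)=\pm\umax$, that is, $\gamma(\cdot)$ is bang with $v=\vmin$ on $O$, as claimed. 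I expect the only delicate point to be the upgrade from the a.e.\ statements of the PMP to the pointwise relations above: this is secured by continuity of the switching functions and of $t\mapsto\max_WH$ together with Remark \ref{rem:h-cont}, after which the no-total-singularity hypothesis is precisely what forbids the common zero set $Z$ from containing an interval.
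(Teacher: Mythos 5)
Your proof is correct and takes essentially the same route as the paper's: both use $H\equiv 0$ together with the maximization condition to show that $\phiu$ and $\phiv$ share the same zero set, invoke the no-total-singularity hypothesis to conclude this common zero set has empty interior, and deduce that on its open dense complement $\phiv<0$ forces the bang controls $(\pm\umax,\vmin)$. Your explicit identity $\umax|\phiu(t)|=\vmin|\phiv(t)|$ and the careful a.e.-to-everywhere upgrade are merely a more detailed packaging of the paper's argument.
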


\begin{proof}
  Since the considered extremal is abnormal, we have, for all $t\in[\Interval]$, 
  \begin{equation}\label{eq:abnormal-condition}
    H\left(\pp(t),\qq(t),\UU(t)\right)=v(t)\phiv(t) + \umax\left|\phiu(t)\right|=0.
  \end{equation}
  From (\ref{eq:abnormal-condition}), we conclude that a zero value of $\phiu(\cdot)$ implies a zero value of $\phiv(\cdot)$ and conversely.
  Hence, the extremal being never totally singular in restriction to any subinterval of $[\Interval]$, $\phiu(\cdot)$ and $\phiv(\cdot)$ cannot vanish identically on any subinterval of $[\Interval]$.
  Consequently,
  the set $E = \{ t\in[\Interval]\mid \phiv(t)\neq0 \}$ is open (since $\phiv$ is continuous) and dense in $[\Interval]$.
  From (\ref{eq:abnormal-condition}) again, we necessarily have $\phiv(t) < 0$ for all $t\in E$.
  Consequently, the considered extremal is bang (on $E$) with $v(\cdot)=\vmin$.
\end{proof}

\startmodif
\begin{remark}
According to the forthcoming Remark \ref{rem:tot-sing},
generically, on $\R^2$, an abnormal is not totally singular.
\end{remark}
\stopmodif

%%COMMENTAIRE A NE PAS EFFACER
%Let $\left(\pp(\cdot),\qq(\cdot),\UU(\cdot)\right)$ be an abnormal extremal
%which is not totally singular defined on $[\Interval]$.
%According to PMP and Lemma \ref{lem:Abnormals_form}, we have
%for a.e. $t\in[\Interval]$,
%\begin{equation}\label{eq:initial-p-abnormal}
%H\left(\pp(t),\qq(t),\UU(t)\right)
%=\vmin\phiv(t)+\umax\phiu(t)
%=0.
%\end{equation}
%In order to determine the abnormal extremal completely,
%one needs to recover the initial value $\pp(t_0)$ of its covector.
%But $\pp(t_0)$ is a non zero vector of a two dimensional space that can be normalized dividing
%equation (\ref{eq:initial-p-abnormal}) % the Hamiltonian
%by $\left|\pp(t_0)\right|$.
%Hence,
%$\pp(t_0)$
%is easily found solving equation (\ref{eq:initial-p-abnormal}) at $t=t_0$
%and taking into account that $\phiv(t_0)<0$.

%-----------------------------------------------------------------------------------
%
% Singular trajectories
%
%-----------------------------------------------------------------------------------
\subsection{Singular trajectories}\label{sec:singular-traj}

\startmodif
From now and until the end of the paper, we assume $n=2$, i.e. $\qq\in\R^2$.
\stopmodif
Let us introduce the functions\footnote{If $F_1$ and $F_2$ are two vector fields, $[F_1,F_2]$ denotes their Lie bracket.}
\begin{eqnarray*}
\DA(\qq) & = & \det\left(F(\qq),G(\qq)\right),\\
\DBu(\qq) & = & \det\left(G(\qq),[F,G](\qq)\right),\\
\DBv(\qq) & = & \det\left(F(\qq),[F,G](\qq)\right),
\end{eqnarray*}
whose zero sets are \textit{fundamental loci} (see \cite{BoscainPiccoli2004_BOOK}) in the construction of the optimal synthesis.

\begin{remark}
Notice that,
although the functions $\DA$, $\DBu$ and $\DBv$ depend on coordinates in \startmodif $\R^2$ \stopmodif,
the sets $\DA^{-1}(0)$, $\DBu^{-1}(0)$ and $\DBv^{-1}(0)$ do not;
indeed, they are intrinsic objects related with the control system (\ref{eq:general-system}).
\end{remark}
The following lemma which is a direct generalization of \cite[Theorem 12 page 47]{BoscainPiccoli2004_BOOK} is stated without proof.
\begin{lemma}\label{lem:singular-trajectories}
  $u$-singular (resp. $v$-singular) trajectories are contained in the set $\DBu^{-1}(0)$ (resp. $\DBv^{-1}(0)$).
\end{lemma}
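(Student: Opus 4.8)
The plan is to differentiate the two switching functions along the extremal and exploit the cancellations forced by the PMP adjoint equation. First I would write the adjoint equation from condition \ref{PMP1} of Theorem \ref{thm:PMP}: since $H=v\langle \pp,F(\qq)\rangle+u\langle \pp,G(\qq)\rangle$, one has $\dot\pp=-v\,(DF)^{\mathrm T}\pp-u\,(DG)^{\mathrm T}\pp$ for almost every $t$, where $DF,DG$ denote the Jacobians. Differentiating $\phiu=\langle \pp,G(\qq)\rangle$ and $\phiv=\langle \pp,F(\qq)\rangle$, then substituting both this adjoint equation and $\dot\qq=vF+uG$, the terms not involving the Lie bracket cancel pairwise (the $u$-terms in $\dot\phiu$, the $v$-terms in $\dot\phiv$), leaving the two fundamental identities, valid for almost every $t$:
\[
\dot\phiu(t)=v(t)\,\langle \pp(t),[F,G](\qq(t))\rangle,\qquad \dot\phiv(t)=-u(t)\,\langle \pp(t),[F,G](\qq(t))\rangle.
\]

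For a $u$-singular trajectory the argument is then immediate. By definition $\phiu\equiv0$ on $[\Interval]$, so $\dot\phiu\equiv0$; since $v(t)\ge\vmin>0$, the first identity forces $\langle \pp(t),[F,G](\qq(t))\rangle=0$ for a.e.\ $t$, and hence for every $t$ by continuity of $\pp$ and $\qq$. Combined with $\phiu(t)=\langle \pp(t),G(\qq(t))\rangle=0$ and the fact that the covector $\pp(t)$ never vanishes, this shows that in $\R^2$ both $G(\qq(t))$ and $[F,G](\qq(t))$ lie in the one-dimensional kernel of the nonzero covector $\pp(t)$; they are therefore linearly dependent, so $\DBu(\qq(t))=\det\big(G(\qq(t)),[F,G](\qq(t))\big)=0$ and the trajectory stays in $\DBu^{-1}(0)$.

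The $v$-singular case follows the same scheme but carries the genuine difficulty, which is the main obstacle: from $\phiv\equiv0$ the second identity only gives $u(t)\,\langle \pp(t),[F,G](\qq(t))\rangle=0$, and since $u$ ranges over $[-\umax,\umax]$ and may vanish, I cannot divide by $u$. The asymmetry with the previous case is exactly that $v$ is bounded below by $\vmin>0$ whereas $u$ is not. To overcome this I would split $[\Interval]$ according to $\phiu$: on the open set $\{\phiu\neq0\}$ the maximality condition \ref{PMP2} forces $u=\umax\,\mathrm{sign}(\phiu)\neq0$, whence $\langle \pp,[F,G](\qq)\rangle=0$ there; on the interior of $\{\phiu=0\}$ the extremal is totally singular, so I instead invoke the \emph{first} identity with $v\ge\vmin>0$ to obtain $\langle \pp,[F,G](\qq)\rangle=0$ again. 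The union of these two open sets is dense in $[\Interval]$ (its complement lies in the nowhere-dense boundary of $\{\phiu=0\}$), so by continuity $\langle \pp(t),[F,G](\qq(t))\rangle=0$ for every $t$. Together with $\phiv(t)=\langle \pp(t),F(\qq(t))\rangle=0$ and $\pp(t)\neq0$, this gives $\DBv(\qq(t))=\det\big(F(\qq(t)),[F,G](\qq(t))\big)=0$, i.e.\ the trajectory lies in $\DBv^{-1}(0)$.
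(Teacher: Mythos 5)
Your proof is correct; note, however, that there is no proof in the paper to compare against: Lemma \ref{lem:singular-trajectories} is stated without proof, as a ``direct generalization'' of \cite[Theorem 12, page 47]{BoscainPiccoli2004_BOOK}, which concerns the single-input system $\dot\qq=F(\qq)+uG(\qq)$. Your argument is the natural way to supply the missing proof, and its real value is that it pinpoints where the generalization is \emph{not} immediate. For a $u$-singular arc, the identity $\dot\phiu=v\lang\pp,[F,G](\qq)\rang$ carries the factor $v\ge\vmin>0$, so the single-input reasoning transfers verbatim; for a $v$-singular arc, the factor in $\dot\phiv=-u\lang\pp,[F,G](\qq)\rang$ is $u$, which may vanish, and this is exactly the obstacle you isolate. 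Your resolution --- splitting $[\Interval]$ into the open set $\{\phiu\neq0\}$, where the maximality condition forces $|u|=\umax>0$, and $\operatorname{int}\{\phiu=0\}$, where the arc is totally singular and the $\dot\phiu$ identity with $v\ge\vmin$ applies, then concluding by density and continuity --- is sound. A slightly shorter route to the same end: since $\phiv\equiv0$, condition \ref{PMP4} of the PMP together with Remark \ref{rem:h-cont} gives $\umax|\phiu(t)|=\lambda$ for all $t$, so either $\lambda>0$ and $\phiu$ never vanishes (hence $|u|=\umax$ a.e.), or $\lambda=0$ and the arc is totally singular on all of $[\Interval]$; either alternative yields $\lang\pp,[F,G](\qq)\rang\equiv0$ without the topological argument. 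In both your proof and this variant, the closing linear-algebra step --- two vectors of $\R^2$ annihilated by the never-vanishing covector $\pp(t)$ must be parallel, so the relevant determinant vanishes --- is the same one the paper uses in its proof of Lemma \ref{lem:uv-sing}.
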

\begin{lemma}[$\uv$-singular trajectories]\label{lem:uv-sing}
  $\uv$-singular trajectories are contained in the set
  $\DA^{-1}(0) \bigcap\DBu^{-1}(0) \bigcap\DBv^{-1}(0)$.
\end{lemma}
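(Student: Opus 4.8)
The plan is to split the claim into its three set-membership requirements and to handle them separately, using Lemma \ref{lem:singular-trajectories} for two of the three inclusions and a direct dimension argument for the last.

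First, I would observe that a $\uv$-singular (totally singular) trajectory is, by Definition \ref{def:singular}, simultaneously $u$-singular and $v$-singular, since the switching functions $\phiu$ and $\phiv$ both vanish identically on $[\Interval]$. Lemma \ref{lem:singular-trajectories} then immediately places such a trajectory in $\DBu^{-1}(0)$ (because it is $u$-singular) and in $\DBv^{-1}(0)$ (because it is $v$-singular). Thus two of the three required inclusions come for free, and the only genuine work is the inclusion in $\DA^{-1}(0)$.

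Second, for the remaining inclusion, I would exploit the standing assumption $n=2$ together with the fact that the PMP furnishes a never-vanishing covector $\pp(\cdot)$. Along the totally singular arc one has $\phiu(t)=\langle \pp(t),G(\qq(t))\rangle=0$ and $\phiv(t)=\langle \pp(t),F(\qq(t))\rangle=0$ for every $t\in[\Interval]$, so the nonzero vector $\pp(t)$ is orthogonal to both $F(\qq(t))$ and $G(\qq(t))$. In the plane, two vectors orthogonal to one and the same nonzero vector are necessarily collinear; hence $F(\qq(t))$ and $G(\qq(t))$ are linearly dependent, which gives $\DA(\qq(t))=\det\left(F(\qq(t)),G(\qq(t))\right)=0$, i.e. $\qq(t)\in\DA^{-1}(0)$.

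There is no serious obstacle in this argument; the nontrivial content is carried entirely by the already-cited Lemma \ref{lem:singular-trajectories}, and the only place the hypothesis $\qq\in\R^2$ is essential is the final step, where the planar orthogonality argument forces $F$ and $G$ onto a common line. Combining the three inclusions yields membership in $\DA^{-1}(0)\cap\DBu^{-1}(0)\cap\DBv^{-1}(0)$, as claimed.
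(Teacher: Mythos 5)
Your proposal is correct and follows essentially the same route as the paper's own proof: the inclusions in $\DBu^{-1}(0)$ and $\DBv^{-1}(0)$ are obtained by citing Lemma \ref{lem:singular-trajectories}, while the inclusion in $\DA^{-1}(0)$ comes from the never-vanishing PMP covector being orthogonal to both $F$ and $G$, which in the plane forces $F$ and $G$ to be parallel. If anything, you are slightly more explicit than the paper in flagging exactly where the hypothesis $n=2$ enters, which is a nice touch.
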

\begin{proof}
  Let $\left(\pp(t),\qq(t),\UU(t)\right)$ be a $\uv$-singular extremal defined on $[\Interval]$.
  Then for a.e. $t\in[\Interval]$,
  \begin{eqnarray*}
    \phiv(t) & = & \left\langle \pp(t),F\left(\qq(t)\right)\right\rangle =0,\\
    \phiu(t) & = & \left\langle \pp(t),G\left(\qq(t)\right)\right\rangle =0,
  \end{eqnarray*}
  which implies that $\pp(t)$ is orthogonal to both $F(\qq(t))$ and $G(\qq(t))$.
  But, according to PMP, $\pp(t)$ cannot vanish,
  hence,
  $F$ and $G$ must be parallel along $\qq(\cdot)$.
  We thus get $\qq(\cdot) \subset \DA^{-1}(0)$.
  Moreover,
  according to Lemma \ref{lem:singular-trajectories},
  we also have \(\DBu^{-1}(0) \bigcap\DBv^{-1}(0)\).
\end{proof}
\begin{remark}\label{rem:tot-sing}
% Although it is not adressed here,
% it can be proved that, under generic assumptions on the vector fields $F$ and $G$,
% the intersection $\DA^{-1}(0) \bigcap\DBu^{-1}(0) \bigcap\DBv^{-1}(0)$ is empty,
% i.e. there is no totally singular trajectories.
Although it is not addressed here,
it can be proved that
the intersection $\DA^{-1}(0) \bigcap\DBu^{-1}(0) \bigcap\DBv^{-1}(0)$ is generically empty.
In other words, generically, on $\R^2$, there is no totally singular trajectories.
\end{remark}
The next lemma describes the kind of switches that may occur  along singular arcs.
\begin{lemma}\label{lem:Singular_control_and_switch}
  Along a $u$-singular trajectory which is not totally singular,
  $v$ is a.e. equal to $\vmax$.
  Along a $v$-singular trajectory which is not totally singular, a $u$-switching cannot occur.
\end{lemma}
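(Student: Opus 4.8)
The plan is to exploit two structural features of problem~(\ref{eq:general-system}) and avoid any Lie-bracket computation. First, the control set $\controlset$ is a rectangle, so the pointwise maximization \ref{PMP2} of PMP decouples into two independent one-dimensional maximizations, giving the bang laws $v=\vmax$ whenever $\phiv>0$ and $v=\vmin$ whenever $\phiv<0$, and $u=\umax\operatorname{sign}(\phiu)$ whenever $\phiu\ne0$. Second, by Remark~\ref{rem:h-cont} the Hamiltonian takes the constant value $\lambda\ge0$ along the whole extremal, which by~(\ref{eq:hamiltonian}) reads $v\phiv+u\phiu\equiv\lambda$. The positivity $0<\vmin\le\vmax$ will be used to divide by $v$ and read off the sign of a switching function.

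For the first assertion I would take a $u$-singular arc, so that $\phiu\equiv0$ on $[\Interval]$. Feeding this into the constancy of the Hamiltonian gives $v(t)\phiv(t)=\lambda$ a.e., whence $\phiv(t)=\lambda/v(t)\ge0$ everywhere because $v\ge\vmin>0$. The decisive step is to rule out $\lambda=0$: if $\lambda=0$ then $\phiv\equiv0$, and together with $\phiu\equiv0$ this would make the arc totally singular, contrary to hypothesis. Hence $\lambda>0$, so $\phiv>0$ on the whole arc, and the $v$-maximization law forces $v=\vmax$ a.e.

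For the second assertion I would take a $v$-singular arc, so that $\phiv\equiv0$ on $[\Interval]$. Now the constancy of the Hamiltonian reads $u(t)\phiu(t)=\lambda$ a.e.; on the open set $\{\phiu\ne0\}$ the $u$-maximization law turns this into $\umax|\phiu(t)|=\lambda$. Since the arc is \emph{not totally singular}, $\phiu\not\equiv0$, so this set is nonempty and forces $\lambda>0$. But then $u\phiu=\lambda>0$ at every instant, which precludes $\phiu$ from vanishing anywhere; being continuous and nowhere zero, $\phiu$ keeps a constant sign, hence so does $u=\umax\operatorname{sign}(\phiu)$, and no $u$-switching can occur.

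The only genuinely delicate point in both parts is the dichotomy on $\lambda$: the whole argument hinges on upgrading ``not totally singular'' to ``$\lambda>0$'', after which the positivity of $v$ (first part) or the continuity together with the constant magnitude of $\phiu$ (second part) closes the argument immediately. I would be careful to state each maximization law only on the open set where the relevant switching function is nonzero, and to use the global constraint $H\equiv\lambda$ to propagate the sign information to the whole closed interval.
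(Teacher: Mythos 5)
Your proof is correct and follows essentially the same route as the paper's: both exploit $\phiu\equiv0$ (resp. $\phiv\equiv0$), the constancy $H\equiv\lambda$ of the Hamiltonian, and the hypothesis of non-total singularity to force $\lambda>0$, from which the strict sign of the remaining switching function (and hence $v=\vmax$, resp. the impossibility of a $u$-switching) is read off. The only difference is presentational: you make explicit the dichotomy on $\lambda$ that the paper compresses into the phrase ``since the trajectory is not $v$-singular, we have $v(t)\phiv(t)=\lambda>0$'', and you write out the second part which the paper dismisses as ``similar''.
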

\begin{proof}
  Let $(\qq(\cdot),\UU(\cdot))$ be a $u$-singular extremal trajectory on $[\Interval]$
   which is not totally singular.
   According to the PMP and since the trajectory is not $v$-singular,
   we have
  \[
  H\left(\pp(t),\qq(t),\UU(t)\right)
  =
  v(t)\phiv(t)
  =\lambda
  >0
  \]
  for a.e. $t\in[\Interval]$.
  Hence, $\phiv(\cdot)$ is positive on $[\Interval]$ and consequently
  $v(t)=\vmax$ for a.e. $t\in[\Interval]$.
  The proof for a $v$-singular trajectory is similar but the PMP yields not the value of the control $u$ along it.
\end{proof}

% \begin{remark}\label{rem:vmin_on_turnpike}
%   When a trajectory enters or exits a $u$-singular arc the control $v$ is equal to $\vmax$.
%   In a similar way, the control $u$ is the same at the entrance and the exit of a $v$-singular arc which is not totally singular.
% \end{remark}

%-----------------------------------------------------------------------------------
%
% Switchings
%
%-----------------------------------------------------------------------------------
\subsection{Switchings}

%-----------------------------------------------------------------------------------
% $u$-switchings
%-----------------------------------------------------------------------------------
\subsubsection{$u$-switchings}

\begin{lemma}
  Along a normal extremal trajectory, a $u$-switching can occur if and only if $v=\vmax$.
\end{lemma}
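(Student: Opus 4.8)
The plan is to read off the conclusion directly from the maximality condition \ref{PMP2} together with normality, without ever computing the derivatives of the switching functions. Since the control set $\controlset=[-\umax,\umax]\times[\vmin,\vmax]$ is a rectangle, the pointwise maximization of $H=v\phiv+u\phiu$ decouples into independent maximizations in $u$ and in $v$. Hence, for a.e.\ $t$, one has $v(t)=\vmax$ when $\phiv(t)>0$ and $v(t)=\vmin$ when $\phiv(t)<0$, while $u(t)=\umax\,\mathrm{sign}\,\phiu(t)$ when $\phiu(t)\ne0$; the maximized Hamiltonian is $\max_{W\in\controlset}H=v^\star\phiv+\umax|\phiu|$, where $v^\star=\vmax$ if $\phiv\ge0$ and $v^\star=\vmin$ if $\phiv<0$. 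By Remark \ref{rem:h-cont} I may assume $H\equiv\lambda$ on all of $[\Interval]$, and normality means $\lambda>0$. Finally, because $\phiu$ is continuous, a $u$-switching time $\tsw$ forces $\phiu(\tsw)=0$, since $\phiu$ must change sign there.

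First I would prove necessity. Let $\tsw$ be a $u$-switching time, so $\phiu(\tsw)=0$ and the term $\umax|\phiu(\tsw)|$ vanishes. Evaluating the (continuous) maximized Hamiltonian at $\tsw$ then gives $\lambda=H(\tsw)=v^\star(\tsw)\,\phiv(\tsw)$, with $v^\star(\tsw)\in\{\vmin,\vmax\}$ according to the sign of $\phiv(\tsw)$. Since $\vmin,\vmax>0$ and $\lambda>0$, the value $\phiv(\tsw)$ can be neither negative (which would give $H(\tsw)=\vmin\phiv(\tsw)<0$) nor zero (which would give $H(\tsw)=0$); hence $\phiv(\tsw)>0$. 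By the maximization in $v$ this forces $v(\tsw)=\vmax$, and since $\phiv$ is continuous and strictly positive at $\tsw$, it remains positive on a neighbourhood, so that $v=\vmax$ a.e.\ around the switching time. Normality is used in an essential way: for an abnormal extremal $\lambda=0$ and, exactly as in the proof of Lemma \ref{lem:Abnormals_form}, $\phiu(\tsw)=0$ would merely force $\phiv(\tsw)=0$, and the conclusion would fail.

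To obtain the converse, and thereby the stated equivalence, I would argue that $u$-switchings are excluded wherever $v\ne\vmax$. Where $v=\vmin$ one has $\phiv<0$, and the identity $\lambda=\vmin\phiv+\umax|\phiu|$ gives $\umax|\phiu|=\lambda-\vmin\phiv>\lambda>0$, so $\phiu$ cannot vanish and no $u$-switching can occur there; the remaining possibility $\phiv=0$ (a $v$-singular instant) is likewise incompatible with $\phiu=0$, because it would give $H=0\ne\lambda$. Consequently the set of times at which a $u$-switching may occur is exactly $\{\,\phiv>0\,\}=\{\,v=\vmax\,\}$, which is the asserted ``if and only if''.

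The computations here are entirely elementary, so the only real difficulty is one of formulation rather than of estimate: making precise the modal phrase ``can occur''. I would read it as a statement about the region in which $\phiu$ is permitted to vanish, and the argument above shows that this region coincides with $\{\,v=\vmax\,\}$. The one point that must be handled with care is that the value of $v$ ``at'' the switching time is meaningful only through the continuity of $\phiv$; it is the strict inequality $\phiv(\tsw)>0$, propagated to a neighbourhood, that legitimately yields $v=\vmax$ near $\tsw$, and in particular shows that a $u$-switching and a $v$-switching cannot occur simultaneously.
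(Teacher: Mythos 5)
Your proof is correct and follows essentially the same route as the paper's: at a $u$-switching time $\phiu$ vanishes, so the constant Hamiltonian (positive by normality) reduces to $v\phiv$, forcing $\phiv>0$ and hence, by continuity of $\phiv$ and the maximality condition, $v=\vmax$ almost everywhere on a neighbourhood of the switching time. The additional material you include (the quantitative exclusion $\umax|\phiu|>\lambda$ where $\phiv<0$, and the degenerate case $\phiv=0$) is just the contrapositive of this same argument, so it supplements but does not diverge from the paper's shorter proof.
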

\begin{proof}
  Let $\qq(\cdot)$ be a normal trajectory on $[\Interval]$ and let $\UU(\cdot)$ be the corresponding control. Let $\tau\in(t_0,t_1)$ be a $u$-switching time, i.e. $\phiu(\tau)=0$. From the PMP, we have 
  \[H\left(\pp(\tau),\qq(\tau),\UU(\tau)\right)=v(\tau)\phiv(\tau)>0,\]
  with $v(\tau)>0$.
  Hence,
  $\phiv(\tau)>0$ and thus for $\epsilon$ sufficiently small,
  $v(t)=\vmax$ for a.e. $t\in[\tau-\epsilon,\tau+\epsilon]$.
\end{proof}

\begin{remark}\label{rem:2-v-switch-in-a-row}
  The previous lemma implies in particular that, along a normal trajectory, a $v$-switching from $\vmax$ to $\vmin$ is necessarily followed by another $v$-switching from $\vmin$ to $\vmax$ before a $u$-switching occurs.
\end{remark}

%-----------------------------------------------------------------------------------
% $\uv$-switchings
%-----------------------------------------------------------------------------------
\subsubsection{$\uv$-switchings} %and $\uv$-singular trajectories}

\begin{lemma}[$\uv$-switchings]\label{lem:u-v-switchings_cannot_occur} 
A $\uv$-switching cannot occur along an extremal trajectory.
\end{lemma}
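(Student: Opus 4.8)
The plan is to argue by contradiction: I will show that the simultaneous vanishing of both switching functions forced by a $\uv$-switching makes the Hamiltonian vanish, hence the extremal abnormal, and then contradict this using the rigidity of abnormal extremals.

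First I would translate the switching hypothesis into conditions on $\phiu$ and $\phiv$. Suppose $\tau$ is a $\uv$-switching time. Because $u$ switches between $\umax$ and $-\umax$ at $\tau$, the maximality condition \ref{PMP2} forces $\phiu(\tau)=0$; likewise, the $v$-switching forces $\phiv(\tau)=0$. Evaluating \eqref{eq:hamiltonian} at $\tau$ then yields $H(\pp(\tau),\qq(\tau),\UU(\tau))=v(\tau)\phiv(\tau)+u(\tau)\phiu(\tau)=0$. By Remark \ref{rem:h-cont} together with condition \ref{PMP4}, the value of $H$ is constant and equal to $\lambda\ge0$ along the extremal, so $\lambda=0$ and the extremal is abnormal.

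Next I would exploit the shape of abnormal extremals. Along an abnormal extremal, condition \ref{PMP4} becomes the identity \eqref{eq:abnormal-condition}, $v(t)\phiv(t)+\umax|\phiu(t)|=0$ for all $t$. Since $v(t)\ge\vmin>0$ and $\umax|\phiu(t)|\ge0$, this forces $\phiv(t)\le0$ for every $t$ (with $\phiv(t)<0$ exactly where $\phiu(t)\neq0$). Thus the $v$-switching function of an abnormal extremal never becomes positive. But a genuine $v$-switching at $\tau$ requires $\phiv$ to change sign there, hence to be strictly positive on the side where $v=\vmax$, on which \ref{PMP2} imposes $\phiv\ge0$. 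This already gives the contradiction in the generic bang--bang situation, and hence proves that no $\uv$-switching can occur.

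The delicate point, and the step I expect to require the most care, is to make sure the $v$-switching is genuine, i.e. that $\phiv$ actually takes positive values near $\tau$ rather than merely touching zero. Indeed, combining $\phiv\le0$ (from abnormality) with $\phiv\ge0$ on the $\vmax$-side (from \ref{PMP2}) only yields $\phiv\equiv0$ there, which by \eqref{eq:abnormal-condition} also gives $\phiu\equiv0$, i.e. a totally singular arc abutting $\tau$. To close the argument cleanly one must rule out such an arc; this is precisely the content of Lemma \ref{lem:uv-sing} and Remark \ref{rem:tot-sing}, which confine totally singular trajectories to the (generically empty) locus $\DA^{-1}(0)\cap\DBu^{-1}(0)\cap\DBv^{-1}(0)$. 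Once totally singular arcs are excluded, the extremal is bang on a punctured neighbourhood of $\tau$, so Lemma \ref{lem:Abnormals_form} applies and gives $v=\vmin$ on an open dense set around $\tau$; by continuity of $\phiv$ this already precludes a $v$-switching, and the contradiction of the previous paragraph applies.
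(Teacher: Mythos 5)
Your proof is correct, and its skeleton is the same as the paper's: a $\uv$-switching at $\tau$ forces $\phiu(\tau)=\phiv(\tau)=0$, hence $H\equiv\lambda=0$, the extremal is abnormal, and the rigidity of abnormal extremals rules out the switching. The paper concludes in one stroke by citing Lemma~\ref{lem:Abnormals_form} (abnormal extremals only admit $(\pm\umax,\vmin)$ as bang controls, so no $v$-switching can occur), whereas your middle step rederives the key fact directly from the abnormality identity \eqref{eq:abnormal-condition}, namely $\phiv\le0$ everywhere, and opposes it to the sign $\phiv\ge0$ forced by the maximality condition on the $\vmax$-side of a genuine $v$-switching; this is essentially the computation inside the proof of Lemma~\ref{lem:Abnormals_form}, inlined. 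Where you genuinely add value is in flagging the degenerate possibility that these two sign constraints only yield $\phiv\equiv0$, hence $\phiu\equiv0$, i.e.\ a totally singular arc abutting $\tau$: the paper's proof applies Lemma~\ref{lem:Abnormals_form} without verifying its hypothesis that the abnormal extremal is never totally singular on any subinterval, an omission covered only implicitly by the genericity statement of Remark~\ref{rem:tot-sing}. You make that reliance explicit and close it with Lemma~\ref{lem:uv-sing} and Remark~\ref{rem:tot-sing}, which is exactly what is available; note that this leaves the lemma valid only generically (or under a standing assumption excluding totally singular arcs), a caveat your proof shares with the paper's but, unlike the paper's, states honestly. (For the drone system itself the issue is vacuous, since there $\DBv^{-1}(0)=\emptyset$ and no totally singular trajectory exists.)
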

\begin{proof}
Let $\left(\pp(\cdot),\qq(\cdot),\UU(\cdot)\right)$ be an extremal
and let $\tau$ be a $\uv$-switching time.
Since the Hamiltonian is constant along trajectories with
  \[
  H\left(\pp(\tau),\qq(\tau),\UU(\tau)\right)
  =  
  v(\tau)\phiv(\tau)+\umax\left|\phiu(\tau)\right|=0,
  \]
the extremal is abnormal.
But,
according to Lemma \ref{lem:Abnormals_form},
an abnormal extremal admits only $(u(\cdot),v(\cdot))=(\pm\umax,\vmin)$ as bang controls
and thus cannot have a $\uv$-switching.
\end{proof}

%-----------------------------------------------------------------------------------
%
% Special domains
%
%-----------------------------------------------------------------------------------
\subsection{Special domains}

Using the sets
$\DA^{-1}\left(0\right)$, $\DBu^{-1}\left(0\right)$ and $\DBv^{-1}\left(0\right)$
defined in Section \ref{sec:singular-traj},
we can define domains in which a control can switch at most once.
Consider a point $\qq\notin\DA^{-1}(0)$.
Then $F(\qq)$ and $G(\qq)$ are linearly independent and form a basis.
An easy computation shows that
\begin{equation}\label{eq:fg}
  \left[F,G\right](\qq)
  =f(\qq)F(\qq)+g(\qq)G(\qq),
\end{equation}
with
\[
  f(\qq)=-\frac{\DBu(\qq)}{\DA(\qq)} \qquad \text{and} \qquad 
  g(\qq)=\frac{\DBv(\qq)}{\DA(\qq)}.
\]
\begin{lemma}\label{lem:one-switch-per-cadrant}
A normal and non-singular trajectory
along which $f>0$ (resp. $f<0$)
admits at most one $u$-switching
and
necessarily from $-\umax$ to $\umax$ (resp. from $\umax$ to $-\umax$).
Similarly,
a normal and non-singular trajectory
along which $g>0$ (resp. $g<0$)
admits at most one $v$-switching
and necessarily from $\vmax$ to $\vmin$
(resp. from $\vmin$ to $\vmax$).
\end{lemma}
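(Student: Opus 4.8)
The plan is to derive a differential equation governing the switching functions by differentiating them along the flow, then read off the sign constraints imposed by the hypotheses on $f$ and $g$. First I would compute $\dot\phiu$ and $\dot\phiv$. Using the definitions $\phiu(t)=\langle\pp,G(\qq)\rangle$ and $\phiv(t)=\langle\pp,F(\qq)\rangle$ together with the Hamiltonian equations \ref{PMP0}--\ref{PMP1} of PMP (Theorem \ref{thm:PMP}), a direct calculation gives $\dot\phiu=\langle\pp,[vF+uG,G]\rangle=v\langle\pp,[F,G]\rangle$ and symmetrically $\dot\phiv=-u\langle\pp,[F,G]\rangle$, since the $\langle\pp,[G,G]\rangle$ and $\langle\pp,[F,F]\rangle$ terms vanish. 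The key point is that both derivatives involve the same quantity $\langle\pp,[F,G]\rangle$.

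Next I would express $\langle\pp,[F,G]\rangle$ in terms of the switching functions themselves using the decomposition \eqref{eq:fg}. Away from $\DA^{-1}(0)$ we have $[F,G]=fF+gG$, so $\langle\pp,[F,G]\rangle=f\,\phiv+g\,\phiu$. Substituting yields the closed system
\begin{equation*}
\dot\phiu=v\,(f\,\phiv+g\,\phiu),\qquad
\dot\phiv=-u\,(f\,\phiv+g\,\phiu).
\end{equation*}
Now focus on $u$-switchings, which occur exactly at zeros of $\phiu$. At such a time $\tau$ we have $\phiu(\tau)=0$, so $\dot\phiu(\tau)=v(\tau)\,f(\tau)\,\phiv(\tau)$. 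Because the trajectory is non-singular, $\phiv$ does not vanish on the relevant interval, and since $v\ge\vmin>0$ the sign of $\dot\phiu$ at a $u$-zero is determined by $f\cdot\phiv$. By Lemma \ref{lem:one-switch-per-cadrant}'s companion reasoning (or directly, since a $u$-switching requires $v=\vmax$ and hence $\phiv>0$), one checks that $\phiv$ has a definite sign; then $f>0$ forces $\dot\phiu>0$ at every zero of $\phiu$, meaning $\phiu$ can only cross zero upward. A function that is monotone-at-its-zeros in a single direction has at most one zero, and an upward crossing corresponds precisely to a switch from $u=-\umax$ to $u=+\umax$ (recall the maximality condition \ref{PMP2} forces $u=\umax\,\mathrm{sign}\,\phiu$). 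The case $f<0$ is symmetric and gives the reverse switch.

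The argument for $v$-switchings is entirely parallel, using $\dot\phiv(\tau)=-u(\tau)\,g(\tau)\,\phiu(\tau)$ at a zero of $\phiv$, with the sign of $u$ fixed and $\phiu$ of definite sign along the non-singular arc; here $g>0$ produces the switch from $\vmax$ to $\vmin$. The main obstacle I anticipate is justifying that $\phiv$ (resp.\ $\phiu$) keeps a constant sign on the interval between consecutive switches so that the "monotone at every zero" conclusion genuinely yields \emph{at most one} zero rather than merely isolated zeros; this requires combining the PMP positivity $H=\lambda\ge0$ with the earlier switching lemmas to pin down the sign, and handling the fact that $\phiu,\phiv$ are only differentiable, not $C^1$, so care is needed at the zeros where the control is discontinuous. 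I would resolve this by working on a maximal subinterval where $f$ (resp.\ $g$) has constant sign and showing the flow of the scalar ODE for $\phiu$ cannot return to zero once it has left it with the forced sign of the derivative.
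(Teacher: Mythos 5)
Your proposal is correct and follows essentially the same route as the paper's proof: differentiate the switching functions, use the decomposition $[F,G]=fF+gG$ from \eqref{eq:fg} to get $\dot{\phiu}(\tau)=v(\tau)f(\qq(\tau))\phiv(\tau)$ at any zero $\tau$ of $\phiu$, and use normality to fix the sign of $\phiv$ there, so that $\phiu$ crosses zero in only one direction. The obstacle you anticipate (constant sign of $\phiv$ on an interval) is not actually needed: the identity $H(\pp(\tau),\qq(\tau),\UU(\tau))=v(\tau)\phiv(\tau)=\lambda>0$ pins down $\phiv(\tau)>0$ pointwise at \emph{every} zero of $\phiu$, and a continuous, differentiable function whose derivative is strictly positive at each of its zeros can vanish at most once, which is exactly how the paper concludes.
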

\begin{proof}
Let $\left(\pp(\cdot),\qq(\cdot),\UU(\cdot)\right)$ be a normal,
non-singular extremal defined on $[\Interval]$
along which $f>0$.
At any $\tau\in[\Interval]$ such that $\phiu(\tau)=0$,
we have
\[
  H\left(\pp(\tau),\qq(\tau),\UU(\tau)\right)
  = v(\tau)\phiv(\tau)
  =\lambda>0,
\]
which, with equation (\ref{eq:fg}), implies
\begin{eqnarray}
\dot{\phiu}(\tau)
&=& v(\tau)\lang \pp(\tau),[F,G](\qq(\tau))\rang \nonumber\\
&=& v(\tau)f(\qq(\tau))\phiv(\tau)+v(\tau)g(\qq(\tau))\phiu(\tau) \nonumber\\
&=& v(\tau)\phiv(\tau)f(\qq(\tau)) \label{eq:dotphiu-f}
>0.
\end{eqnarray}
Since,
$\phiu$ is continuous, we conclude that $\phiu$ can vanish at most once in $[\Interval]$.
Moreover,
according to inequality (\ref{eq:dotphiu-f}),
$\dot{\phiu}(\tau)$ and $f(\qq(\tau))$ have same sign.
Consequently,
if $\phiu(\tau)=0$,
then $\dot \phiu(\tau)>0$ and
$u(\cdot)$ switches at $\tau$ from $-\umax$ to $\umax$.
Reasoning similarly with $\phiu$ replaced by $\phiv$ gives the second part of the lemma.
\end{proof}

%-----------------------------------------------------------------------------------
%-----------------------------------------------------------------------------------
%
% Construction of time-optimal synthesis for the reduced system
%
%-----------------------------------------------------------------------------------
%-----------------------------------------------------------------------------------
\section{Construction of time-optimal synthesis for the reduced system}\label{sec:applicationI}

In this section,
we apply the results obtained in the previous sections to solve problem \textrm{\Ptwo}.
\startmodif
 Although the problem
 \stopmodif is similar to the one studied
in \cite{MaillotBoscainGauthierSerres2015},
the resolution is much more complicated due to the presence of a second control.

%%%COMMENTAIRE A NE PAS EFFACER
%%%PEUT A CHANGER PLUS TARD
% %NE PAS EFFACER : CAS VMAX-VMIN
% In this section, for the sake of clarity and without loss of generality,
% we assume that
% $\controlset=[-1,1]\times\left[\vmin,\vmax\right]$.
% According to Remark \ref{rem:reparametrization},
% it is even possible to rewrite this set with only one parameter $\p = \vmax/\vmin$, however, we prefer to keep $\vmin$ and $\vmax$ to facilitate the reading.
% Note moreover that, in this case, $\rmin=\vmin$.
In this section, for the sake of clarity and without loss of generality,
we assume,
according to Remark \ref{rem:reparametrization}, that
$\controlset=[-1,1]\times[\vm,\VM]$.
Note moreover that, in this case, $\rmin=\vm$.

%-----------------------------------------------------------------------------------
%
% Application of the Pontryagin Maximum Principle
%
%-----------------------------------------------------------------------------------
\subsection{Application of the Pontryagin Maximum Principle}

First of all,
notice that system (\ref{eq:system-reduced}) is of the form (\ref{eq:general-system}) with
\[
  \qqt        = \begin{pmatrix}   \xt \\ \yt  \end{pmatrix},\quad
  F(\qqt)     = \begin{pmatrix}  -1   \\ 0    \end{pmatrix} \textrm{ and}\quad
  G(\qqt)     = \begin{pmatrix}  -\yt \\ \xt  \end{pmatrix}.
\]
We apply PMP to \textrm{\Ptwo}.
The control-dependent Hamiltonian function of PMP is
\[
  H\big(\qqt,\pp,\UU\big) = -v\px+u\left(\py \xt-\px \yt\right),
\]
with $\pp = \left(\px,\py\right)\in\R^2$ being the covector.
The adjoint system is thus given by 
\begin{equation}\label{eq:adjoint_system}
  \left\{
  \begin{aligned}
    \dot{\px}(t) &= -u(t)\py(t)\\
    \dot{\py}(t) &=  u(t)\px(t)
  \end{aligned}.
  \right.
\end{equation}
and the switching functions are
\begin{eqnarray}
  \phiu(t) & = & \py(t)\xt(t)-\px(t)\yt(t), \label{eq:switching_function_u}\\
  \phiv(t) & = & -\px(t). \label{eq:switching_function_v}
\end{eqnarray}
The maximality condition of the PMP reads
\[
  H\big(\qqt(t),\pp(t),\UU(t)\big)
  =\max_{\left(u,v\right)\in\,\controlset}\big(u\phiv(t)+v\phiu(t)\big)
  =\lambda,
\]
and yields the controls
\begin{equation}\label{E:bang-controls}
  u(t)=
  \begin{cases}
    -1  &\textrm{if~} \phiu(t)<0\\
    \phantom{-} 1   &\textrm{if~} \phiu(t)>0
  \end{cases},
  \quad
  v(t)=
  \begin{cases}
    \vm &\textrm{if~} \phiv(t)<0\\
    \VM &\textrm{if~} \phiv(t)>0
  \end{cases}.
\end{equation}
\begin{remark}
  The cases where the switching functions vanish identically is addressed in the next subsection.
\end{remark}

%-----------------------------------------------------------------------------------
%
% Singular trajectories
%
%-----------------------------------------------------------------------------------
\subsection{Singular trajectories}

Let us compute the quantities
\[
  \DA(\xt,\yt) = -\xt ,\quad \DBu(\xt,\yt)=\yt \quad \DBv(\xt,\yt)=1,
\]
\vspace{-1em}
\[
  f(\xt,\yt) = \frac{\yt}{\xt} ,\quad g(\xt,\yt)=-\frac{1}{\xt}.
\]
Lemmas \ref{lem:singular-trajectories},
\ref{lem:uv-sing}
and \ref{lem:Singular_control_and_switch}
imply that

\begin{itemize}
\item
there exists no $v$-singular trajectory
(and consequently no totally singular trajectory)
since $\DBv^{-1}(0)=\emptyset$;

\item
$u$-singular trajectories are contained in the
set\footnote{According to \cite{BoscainPiccoli2004_BOOK}, the set $\{(\tilde x,\tilde y)\in\R^2 \mid \tilde y=0\}$ is a turnpike.}
% $\DBu^{-1}(0)$ (Lemma \ref{lem:singular-trajectories}). In this case 
\[
\DBu^{-1}(0)=\left\{ \left(\xt,\yt\right)\in\mathbb{R}^{2} \mid \yt=0\right\}.
\]
To compute the corresponding control, %on $u$,
we differentiate w.r.t. $t$ the function $\phiu$
(which is identically zero).
A straightforward calculation yields %VOIR FAYEZ
\[
  \begin{aligned}
    \dot \phiu(t) & = \VM\lang \pp(t),\left[F,G\right](\qq(t))\rang
                  = -\VM\py(t)
                    = 0,\\
    \ddot \phiu(t) & = \VM u(t)\lang \pp(t),\left[G,\left[F,G\right]\right](\qq(t))\rang 
                      =\VM u(t)\px(t)
                      =0,
      \end{aligned}
\]
which,
taking into account that $(\px(t),\py(t))$ never vanishes, implies that,
along $u$-singular trajectories,
$u(\cdot)$ vanishes identically.
Note that this is quite intuitive since $u=0$ is
the only control that allows the trajectory to stay on the $x$-axis.
\end{itemize}

%-----------------------------------------------------------------------------------
%
% Notation
%
%-----------------------------------------------------------------------------------
%%%% BEGIN Notation
\subsection{Notation}\label{sec:notation}

The following table defines a naming convention for the five possible optimal controls in order to simplify the description of the trajectories.
\begin{table}[h]
  \begin{centering}
    \begin{tabular}{|c|c|}
    \hline Control          & Notation     \tabularnewline
    \hline $(-1,\vm)$     & $\cm$          \tabularnewline
    \hline $(1,\vm)$      & $\cp$          \tabularnewline
    \hline $(-1,\VM)$     & $\cM$          \tabularnewline
    \hline $(1,\VM)$      & $\cP$          \tabularnewline
    \hline {$u$-singular}   & $\cs$          \tabularnewline
    \hline 
    \end{tabular}
    \caption{Notation of the five possible optimal controls\label{tab:Control-notation}}
  \par\end{centering}
\end{table}
The letter stands for $u$ equals to plus $(\cp,\cP)$ or minus $(\cm,\cM)$ one or to the singular control.
The lower case and upper case correspond to $\vm$ and $\VM$ respectively.
We define $\traj{i}$ as the trajectory starting from $\Xzerot$ with the control 
$i\in\{\cm,\cM,\cp,\cP,\cs\}$.
Similarly,
$\traj{ij}$ denotes the trajectory switching from $\traj{i}$ at time $\Tsw{ij}$ with the control
$j\in\{\cm,\cM,\cp,\cP,\cs\}\setminus\{i\}$,
and $\zphiu{ij}$ (resp. $\zphiv{ij}$) denotes a zero of the switching function on $u$ (resp. $v$) corresponding to the trajectory $\traj{ij}$.

%-----------------------------------------------------------------------------------
%
% Optimal synthesis algorithm
%
%-----------------------------------------------------------------------------------
\subsection{Optimal synthesis algorithm}\label{sec:algorithm}

%%%%COMMENTAIRE A NE PAS EFFACER
%%%A REREDIGER BIEN PLUS TARD !!!!!!!!!
%Since $f(\Xzerot)$ and  $g(\Xzerot)$ are different from zero,
%Lemma \ref{lem:one-switch-per-cadrant} ensures that trajectories having chattering
%(e.g. an infinite number of switchings in finite time)
%cannot be extremal in a small neighborhood of $\Xzerot$.
Since $\Xzerot$ is an equilibrium point of system \eqref{eq:dubins} for the control
\((1,\vm)\) and since 
$\Xzerot\notin\DBu^{-1}(0)$,
there are, a priori, three possible optimal starting trajectories candidates:
$\traj{\cM}$, $\traj{\cm}$ and $\traj{\cP}$ corresponding to the bang controls  \( \left(-1,\VM\right) \), \( \left(-1,\vm\right) \) and \( \left(1,\VM\right) \) respectively (see Fig. \ref{fig:Integral-curves-q0}).
\begin{figure}[!ht]
  \centering
  \def\svgwidth{0.7\columnwidth}
  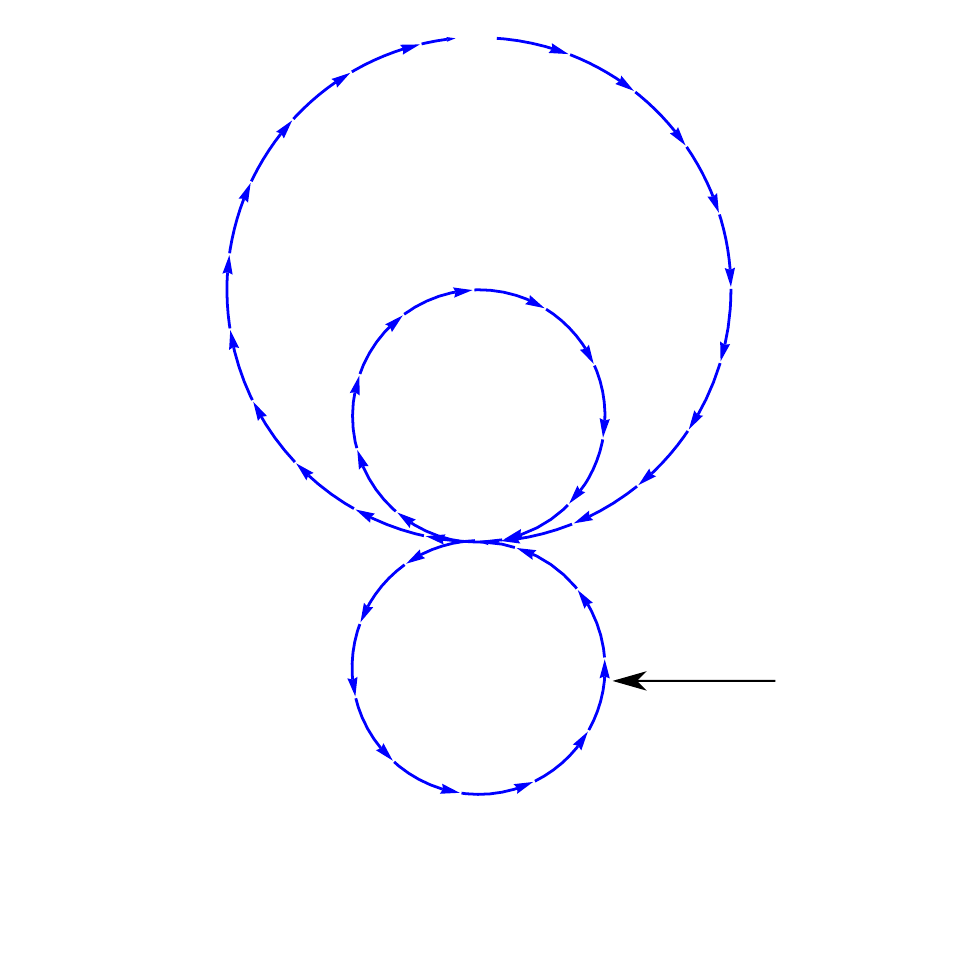
  \caption{Candidate extremal trajectories of problem \textrm{\Ptwo} issued from $\Xzerot$\label{fig:Integral-curves-q0}}
\end{figure}
The time-optimal synthesis is constructed following the three steps described below.
\begin{enumerate}[Step 1]
  \item For each bang trajectory starting from $\Xzerot$, compute the last time at which the trajectory is extremal (or has lost its optimality by intersecting itself) and study which kind of extremal trajectories can bifurcate from it.
  \item For each bang or singular trajectory bifurcating from one of the starting trajectories, compute the last time at which it is extremal.
  If there are intersections among trajectories, we cancel those parts that are not optimal (among trajectories already computed up to this step).
  \item For each trajectory
  computed at the previous step that did not loose its optimality, %at the previous step,
  prolong it with the next bang or singular trajectory up to the last time at which it is extremal.
  If there are intersections among trajectories, cancel those parts that are not optimal (among trajectories already computed up to this step).
\end{enumerate}
Then the synthesis is built recursively, repeating Step 3 until no new trajectories are generated.
In our case, four applications of Step 3 are necessary.

\begin{remark}
  Notice that, although Step 2 and Step 3 seem similar, Step 2 does not add any trajectory to the synthesis whereas Step 3 does. Step 1 and 2 correspond to an initialisation of the algorithm whereas Step 3 is the step that construct the synthesis recursively. 
\end{remark}

%-----------------------------------------------------------------------------------
%
% Expression of the adjoint vector
%
%-----------------------------------------------------------------------------------
\subsection{Expression of the adjoint vector}
Since the starting control $(u_0,v_0)=(u(0),v(0)) \in \{(-1,\vm),(-1,\VM),(1,\VM)\}$,
the solution to system (\ref{eq:adjoint_system})
with the normalization $|\pp(0)|=1$ is
\begin{equation}\label{eq:sol_adjoint}
\left\{
    \begin{aligned}
      \px(t) &=\cos\left(\alpha+t\right)\\
      \py(t) &=u_0\sin\left(\alpha+t\right), \quad u_0\in\{-1,1\},
    \end{aligned}
    \right.
\end{equation}
where $\alpha$ is defined by $\pp(0)=(\cos\alpha,u_0\sin\alpha)$.
The condition
\ref{PMP4} of PMP written at the initial point implies that
% %NE PAS EFFACER : CAS VMAX-VMIN
% \[
%   \left(-v(0)+u(0){\vm}\right)\px(0)
%   \ge0.
% \]
\[
  \left(-v_0+u_0\right)\px(0)
  \ge0.
\]
Since 
\(
\left(u_0,v_0\right)\neq\left(1,\vm\right),
\)
% %NE PAS EFFACER : CAS VMAX-VMIN
% \(
% \left(-v(0)+u(0){\vm}\right)
% \)
\(
\left(-v_0+u_0\right)
\)
\startmodif
is non-positive.
\stopmodif
Hence,
\(
\px(0)=\cos\alpha\le0
\)
and
$\alpha\in[\frac{\pi}{2},\frac{3\pi}{2}]$.
The following study consists in analyzing the behaviour of the extremal trajectories depending on the value of
$(\alpha,u_0,v_0) \in [\frac{\pi}{2},\frac{3\pi}{2}]\times\{(-1,\vm),(-1,\VM),(1,\VM)\}$.

%-----------------------------------------------------------------------------------
%
% Starting trajectories
%
%-----------------------------------------------------------------------------------
\subsection{Step 1: Analysis of the trajectories starting from \texorpdfstring{$\Xzerot$}{}}\label{S:Step1}
This section details the first step (and only this one) of the algorithm described in Section \ref{sec:algorithm}.

%-----------------------------------------------------------------------------------
% Starting with the control ${\left(-1,\VM\right)}$
%-----------------------------------------------------------------------------------
\subsubsection{Trajectories corresponding to control ${\left(-1,\VM\right)}$}\label{sec:(-1,eta)}

The trajectory $\traj{\cM}$ starting from $\Xzerot$ with the control \( \left(-1,\VM\right) \)
has coordinates
\[
  \left\{
  \begin{aligned}
    \xt(t) &=-\left(\VM+\vm\right)\sin t\\
    \yt(t) &=\VM-\left(\VM+\vm\right)\cos t,
  \end{aligned}
  \right.
\]
and from (\ref{eq:sol_adjoint}), the coordinates of the adjoint vector are
\[
  \left\{
  \begin{aligned}
    \px(t)&=\cos\left(t+\alpha\right)\\
    \py(t)&=-\sin\left(t+\alpha\right),
  \end{aligned}
  \right.
\]
with $\alpha\in[\frac{\pi}{2},\frac{3\pi}{2}]$. %such that $\pp(0)=(\cos\alpha,-\sin\alpha)$.
It follows that the switching functions are 
\begin{eqnarray*}
  \phiu(t) &=& \left(\VM+\vm\right)\cos\alpha-\VM\cos\left(t+\alpha\right),\\
  \phiv(t) &=& -\px(t)=-\cos\left(t+\alpha\right).
\end{eqnarray*}
Recall that the sign of each of these functions determines the value of the corresponding control. 
It is thus fundamental to study when does a switching function change its sign.
Depending on the initial value of the covector parameterized by $\alpha$ the following cases have to be distinguished.
\begin{itemize}
  \item For $\alpha={\pi}/{2}$, $\phiu$ starts with value zero and then takes positive values: from \eqref{E:bang-controls}, it cannot correspond to	a trajectory starting with control ${\left(-1,\VM\right)}$.

  \item For $\asing=\arccos\left(-{\VM}/({\VM+\vm})\right)$,
  		$\phiu$ starts with negative values and has a zero of order two at 
  		$\tsing=\pi-\asing$.
  		$\phiv$ starts with positive values and vanishes after $\phiu$ does.
      \startmodif
      The trajectory reaches the $\tilde x$-axis at time $\tsing$. In this case, at time $t=\tsing$, either the trajectory becomes $u$-singular, switches to ${\left(1,\VM\right)}$ or does not switch. In the latter case, there is a $v$-switching at time
      \(
        \Tsw{\cM\cm}(\alpha)=-\alpha+{3\pi}/{2}
      \).
      \stopmodif
  
  \item For every $\alpha\in\left({\pi}/{2},\asing\right)$,
	  	$\phiu$ starts with negative values and $\phiv$ with positive values.
	  	The function that vanishes first is $\phiu$. At time
	 	\(
	    	\Tsw{\cM\cP}(\alpha) = -\alpha+\arccos\left(\cos\alpha({\VM+\vm})/{\VM}\right)
	 	\),
	 	$\phiu$ changes its sign and a $u$-switching occurs.

  \item For every $\alpha\in\left(\asing,{3\pi}/{2}\right)$,
  		$\phiu$ is always negative. $\phiv$ starts with positive values and 
  		changes sign at time
  		\(
  			\Tsw{\cM\cm}(\alpha)=-\alpha+{3\pi}/{2}
  		\).
  		There is a $v$-switching at this time.
  
  \item For $\alpha={3\pi}/{2}$, $\phiv$ starts with value zero and then takes negative values: from \eqref{E:bang-controls}, it cannot correspond to
  		a trajectory starting with control ${\left(-1,\VM\right)}$.
\end{itemize}
\startmodif
From this analysis, we define the following families of trajectories issued from $\traj{M}$:
\begin{description}
  \item[Family 1] trajectories $\traj{Ms}$ corresponding to 
    $\alpha=\asing=\arccos\left(-{\VM}/({\VM+\vm})\right)$.
  \item [Family 2] trajectories $\traj{MP}$ corresponding to 
    $\alpha\in\left({\pi}/{2},\asing\right]$.
  \item [Family 3] trajectories $\traj{Mm}$ corresponding to 
    $\alpha\in\left[\asing,{3\pi}/{2}\right)$.
\end{description}
\stopmodif

% The three possible cases in the previous analysis give three different families of trajectories to study at Step 2 of the algorithm.

%-----------------------------------------------------------------------------------
% Starting with control $\left(1,\VM\right)$
%-----------------------------------------------------------------------------------
\subsubsection{Trajectories corresponding to control $\left(1,\VM\right)$}

The trajectory $\traj{P}$ starting from $\Xzerot$ with the control \( \left(1,\VM\right) \)
has coordinates
\[
\left\{
\begin{aligned}
\xt(t) &=-\left(\VM-\vm\right)\sin t\\
\yt(t) &=-\VM+\left(\VM-\vm\right)\cos t
\end{aligned}
\right.
\]
and from (\ref{eq:sol_adjoint}), the coordinates of the corresponding adjoint vector are
\[
\left\{
\begin{aligned}
\px(t) &=\cos\left(t+\alpha\right)\\
\py(t) &=\sin\left(t+\alpha\right),
\end{aligned}
\right.
\]
with $\alpha\in\left[\frac{\pi}{2},\frac{3\pi}{2}\right]$.
It follows that the switching functions are 
\begin{eqnarray*}
	\phi_{u}(t) & = & -\left(\VM-\vm\right)\cos\alpha+\VM\cos\left(t+\alpha\right),\\
	\phi_{v}(t) & = & -\cos\left(t+\alpha\right).
\end{eqnarray*}
Similarly to Section \ref{sec:(-1,eta)}, depending on the value of $\alpha$ the following analysis may be performed.
%The analysis of $\phi_{u}$ and $\phi_{v}$ yields the following.
\begin{itemize}
	\item For $\alpha\in [\frac{\pi}{2},\frac{3\pi}{2})$, $\phi_{u}$
		starts with a nonpositive value and then takes negative values: from \eqref{E:bang-controls},
		it cannot correspond to	a trajectory starting with control ${\left(1,\VM\right)}$.

	\item For $\alpha=\frac{3\pi}{2}$, $\phi_{v}$ starts with value zero and then takes
		negative values: from \eqref{E:bang-controls}, it cannot correspond to	a trajectory starting with control ${\left(1,\VM\right)}$.
\end{itemize}
We conclude that there is no optimal trajectory starting from $\Xzerot$ 
with the controls \( \left(1,\VM\right) \).

%-----------------------------------------------------------------------------------
% Starting with the controls $\left(-1,\vm\right)$
%-----------------------------------------------------------------------------------
\subsubsection{Trajectories corresponding to control $\left(-1,\vm\right)$} %: abnormal trajectory}

The trajectory $\traj{m}$ starting from $\Xzerot$ with the control \( \left(-1,\vm\right) \)
has coordinates
% %NE PAS EFFACER : CAS VMAX-VMIN
% \[
%   \left\{
%   \begin{aligned}
%     \xt(t) &=-2\vm\sin t\\
%     \yt(t) &=\vm-2\vm\cos t,
%   \end{aligned}
%   \right.
% \]
\[
  \left\{
  \begin{aligned}
    \xt(t) &=-2\sin t\\
    \yt(t) &=\vm-2\cos t,
  \end{aligned}
  \right.
\]
and from (\ref{eq:sol_adjoint}), the coordinates of the adjoint vector are
\[
  \left\{
  \begin{aligned}
    \px(t)&=\cos\left(t+\alpha\right)\\
    \py(t)&=-\sin\left(t+\alpha\right).
  \end{aligned}
  \right.
\]
It follows that the switching functions are 
% %NE PAS EFFACER : CAS VMAX-VMIN
% \begin{eqnarray*}
%   \phiu(t) &=& 2\vm\cos\left(\alpha\right) 
%                 -\vm\cos\left(t+\alpha\right),\\
%   \phiv(t) &=& -\cos\left(t+\alpha\right).
% \end{eqnarray*}
\begin{eqnarray*}
  \phiu(t) &=& 2\cos\left(\alpha\right) 
                -\cos\left(t+\alpha\right),\\
  \phiv(t) &=& -\cos\left(t+\alpha\right).
\end{eqnarray*}
The maximization condition \ref{PMP4} of PMP implies
% %NE PAS EFFACER : CAS VMAX-VMIN
% \[
%     \begin{aligned}
%       \phiu(0)  & = \vm\cos\left(\alpha\right)\le 0,\\
%       \phiv(0)  & = -\cos\left(\alpha\right)\le 0,
%     \end{aligned}
% \]
\[
    \begin{aligned}
      \phiu(0)  & = \cos\left(\alpha\right)\le 0,\\
      \phiv(0)  & = -\cos\left(\alpha\right)\le 0,
    \end{aligned}
\]
i.e. 
$\alpha\in \{ \frac{\pi}{2},\frac{3\pi}{2}\}$
(in particular, the trajectory is abnormal). %$\px(0)=0$
Note that $\phiu$ and $\phiv$ must be negative on a (small) open interval of the form $(0,\epsilon)$ since the starting control is the bang control $(-1,\vm)$.
Moreover,
since
$\phiu(0)=\phiv(0)=0$
and
% %NE PAS EFFACER : CAS VMAX-VMIN
% \begin{eqnarray*}
%   \dot{\phiu}(0)  &=&  \vm\sin\alpha,\\ % = -\vm\py(0)
%   \dot{\phiv}(0)  &=&  \sin\alpha, % = -\py(0)
% \end{eqnarray*}
\begin{eqnarray*}
  \dot{\phiu}(0)  &=&  \sin\alpha,\\ % = -\vm\py(0)
  \dot{\phiv}(0)  &=&  \sin\alpha, % = -\py(0)
\end{eqnarray*}
$\phiu$ and $\phiv$ will be both negative on $(0,\epsilon)$ if and only if $\alpha=3\pi/2$.
Consequently,
there is only one extremal
(corresponding to $\alpha=3\pi/2$)
starting from $\Xzerot$ with control $(-1,\vm)$.
The two switching functions along this extremal
%A ENLEVER POUR LA VERSION FINALE
% %NE PAS EFFACER : CAS VMAX-VMIN
% \begin{eqnarray*}
%   \phiu(t) &=& -\vm\sin t,\\
%   \phiv(t) &=& -\sin t,
% \end{eqnarray*}
(\(
	\phiu(t)=\phiv(t) = -\sin t
\))
change their sign at $t=\pi$
and since
a $\uv$-switching cannot occur
(see Lemma \ref{lem:u-v-switchings_cannot_occur})
the trajectory \startmodif
$\traj{m}$ loses its optimality no later than time $\pi$.
\stopmodif

%-----------------------------------------------------------------------------------
%
% Step 2: ...
%
%-----------------------------------------------------------------------------------
\subsection{Step 2: Analysis of the trajectories bifurcating from \texorpdfstring{$\traj{M}$}{}}
\label{S:Step2}

At this step we study separately all the bifurcating candidate extremals found at the previous step.
% We recall the three families found at the previous step.
% \begin{itemize}
% 	\item Family 1: trajectories $\traj{Ms}$ corresponding to 
% 		$\alpha=\alpha_{\mathrm{sing}}=\arccos\left(-{\VM}/({\VM+\vm})\right)$.
% 	\item Family 2: trajectories $\traj{MP}$ corresponding to 
% 		$\alpha\in\left({\pi}/{2},\alpha_{\text{\textrm{sing}}}\right)$.
% 	\item Family 3: trajectories $\traj{Mm}$ corresponding to 
% 		$\alpha\in\left(\alpha_{\text{\textrm{sing}}},{3\pi}/{2}\right)$.
% \end{itemize}

%-----------------------------------------------------------------------------------
% Family 1: singular trajectory
%-----------------------------------------------------------------------------------
\subsubsection{Family 1: singular trajectory}

This family corresponds to a trajectory entering the turnpike at the time 
$\tsing=\pi-\asing$. 
From Lemma \ref{lem:Singular_control_and_switch},
the control $(0,\VM)$ remains unchanged and the trajectory exits the turnpike with the control 
$v = \VM$.
The trajectory is obtained from (\ref{eq:dubinstilde}) and lies on the turnpike, that is
% %NE PAS EFFACER : CAS VMAX-VMIN
% \[
%    \left\{
%    \begin{aligned}
%       \xt(t) &=-\VM t-\sqrt{\vm\left(2\VM+\vm\right)}+\VM\tsing\\
%       \yt(t) &=0
%    \end{aligned}
%    \right.,
%    \quad\forall~t>\tsing.
% \]
\[
   \left\{
   \begin{aligned}
      \xt(t) &=-\VM t-\sqrt{\left(2\VM+\vm\right)}+\VM\tsing\\
      \yt(t) &=0
   \end{aligned}
   \right.,
   \quad\forall~t>\tsing.
\]
 It follows from (\ref{eq:adjoint_system}) that the covector is constant along a $u$-singular trajectory. Evaluating (\ref{eq:sol_adjoint}) at time $t=\tsing$ with $\alpha=\asing$ gives
\[
      \pp(t) = (\px(t),\py(t))=(-1,0), \quad \forall~t>\tsing.
\]
The $u$-singular trajectory is then extremal for any time $t>\tsing$.

%-----------------------------------------------------------------------------------
% Family 2
%-----------------------------------------------------------------------------------
\subsubsection{Family 2: first switching on $\boldsymbol{u}$}

This family corresponds to the set of trajectories switching from $\traj{M}$ with a $u$-switching. Recall that the trajectories of this family correspond to controls 
\(\left(1,\VM\right)\) and the $u$-switching time is
\(
\Tsw{MP}(\alpha) = -\alpha+\arccos\left(\cos\alpha({\VM+\vm})/{\VM}\right),
\)
with $\alpha\in\left(\frac{\pi}{2},\asing\right]$.
The trajectory, obtained from (\ref{eq:dubinstilde}), is
\[
   \left\{
   \begin{aligned}
     \xt(t) &=
     (\VM+\vm)\sin(t-2\Tsw{MP}(\alpha))-2\VM\sin(t-\Tsw{MP}(\alpha))
     \\   
     \yt(t) &=
     -\VM-(\VM+\vm)\cos(t-2\Tsw{MP}(\alpha))+2\VM\cos(t-\Tsw{MP}(\alpha)),
   \end{aligned}
   \right.
\]
for all $t>\Tsw{MP}(\alpha)$.
The covector satisfies (\ref{eq:adjoint_system}), which leads to the coordinates of the adjoint vector:
\[
   \left\{
   \begin{aligned}
    \px(t)  &=  \cos(t-\alpha-2\Tsw{MP}(\alpha))\\
    \py(t)  &=  \sin(t-\alpha-2\Tsw{MP}(\alpha)),
   \end{aligned}
   \right.
\]
%-------------------------------------------------------------------------------
% Switching functions
%-------------------------------------------------------------------------------
Given the coordinates of the trajectory and the covector, the switching functions are:
\begin{eqnarray*}
    \phiu(t) & = & \py(t)\xt(t)-\px(t)\yt(t)  \\
             & = & \VM\cos(t-\alpha-2\Tsw{MP}(\alpha))+(\VM+\vm)\cos\alpha
                   -2\VM\cos(\alpha+\Tsw{MP}(\alpha)) \\
             & = & -(\VM+\vm)\cos\alpha+\VM\cos(t-\alpha-2\Tsw{MP}(\alpha))
    \\
    \phiv(t)  & = & -\px(t)         \\
              & = & -\cos(t-\alpha-2\Tsw{MP}(\alpha))
\end{eqnarray*}
As in the previous section, we study the sign of the switching functions.
Let $\zphiu{MP}(\alpha)$ and
$\zphiv{MP}(\alpha)$, both greater than $\Tsw{MP}(\alpha)$, be the first zeros of $\phiu$ and $\phiv$ respectively.
We thus have
\[
   \phiu(\zphiu{MP}(\alpha))
   =
   -(\VM+\vm)\cos\alpha+\VM\cos(\zphiu{MP}(\alpha)-\alpha-2\Tsw{MP}(\alpha))=0.
\]
Since we seek the first zero of $\phiu$, it follows that
\begin{align*}
  \zphiu{MP}(\alpha)
      &= \alpha+\arccos\left(\VMvmVMcos\right)
          +2\Tsw{MP}(\alpha),\\
      &= -\alpha +3\arccos\left(\VMvmVMcos\right).            
\end{align*}
In the same way, we determine the first zero of $\phiv$, we have
\[
   \phiv(\zphiv{MP}(\alpha))  =   
  -\cos(\zphiv{MP}(\alpha)-\alpha-2\Tsw{MP}(\alpha))=0.
\]
which gives the $v$-switching time
\begin{align*}
      \zphiv{MP}(\alpha)
      &=  -\frac{\pi}{2}+\alpha+2\Tsw{MP}(\alpha),     \\
      &=  -\frac{\pi}{2}-\alpha+2\arccos\left(\VMvmVMcos\right).
\end{align*}
Evaluating the difference between the two switching times, 
one can determine which switching function changes sign first.
Since $\asing<\pi$, the difference
\[
   \zphiu{MP}(\alpha)-\zphiv{MP}(\alpha)
   =\arccos\left(\VMvmVMcos\right)
    +\frac{\pi}{2}
\]
is positive for all 
$\alpha\in\left(\frac{\pi}{2},\asing\right]$.
The next switching is then a $v$-switching which occurs at a time 
$\Tsw{MPp}(\alpha)=\zphiv{MP}(\alpha)$
for \startmodif
this family of trajectories.
\stopmodif
%-------------------------------------------------------------------------------
% Switching curve
%-------------------------------------------------------------------------------

\medskip
The curve made of $v$-switching points is called $v$-\emph{switching curve}. This curve, parametrized by $\alpha$, is given by
\[
   \left\{
   \begin{aligned}
      \xtswc{MP}(\alpha) & = \xt(\Tsw{MPp}(\alpha))
            = \left(\VM+\vm\right)\cos\alpha
      \\
      \ytswc{MP}(\alpha) & = \yt(\Tsw{MPp}(\alpha))
            = -\VM-(\VM+\vm)\sin\alpha
            +2\VM\sqrt{1-\left(\VMvmVMcos\right)^{2}},
   \end{aligned}
   \right.
\]
with $\alpha\in\left(\frac{\pi}{2},\asing\right]$.
On Fig. \ref{fig:1-switch}, this curve is shown in gray.

%-----------------------------------------------------------------------------------
% Family 3
%-----------------------------------------------------------------------------------
\subsubsection{Family 3: first switching on $\boldsymbol{v}$}

This family corresponds to the set of trajectories switching from $\traj{M}$ with a $v$-switching.
Recall that the trajectories of this family correspond to a control
$(-1,\vm)$ and a $v$-switching time $\Tsw{Mm}(\alpha)=-\alpha+3\pi/2$,
with $\alpha\in[\asing,\frac{3\pi}{2})$.
%-------------------------------------------------------------------------------
% Trajectory and covector
%-------------------------------------------------------------------------------
For all $t>\Tsw{Mm}(\alpha)$, the trajectory
is
\[
   \left\{
   \begin{aligned}
      \xt(t) &= (\VM-\vm)\cos(\alpha+t)-(\VM+\vm)\sin t
      \\
      \yt(t) &=  \vm-(\VM+\vm)\cos t-(\VM-\vm)\sin(\alpha+t)
   \end{aligned}
   \right.
\]
and the covector is given by
\[
   \left\{
   \begin{aligned}
      \px(t)
      &=\sin(t-\Tsw{Mm}(\alpha))=\cos(t+\alpha)\\
      \py(t)
      &=\cos(t-\Tsw{Mm}(\alpha))=-\sin(t+\alpha).
   \end{aligned}
   \right.
\]
%-------------------------------------------------------------------------------
% Switching time
%-------------------------------------------------------------------------------
Since the Family 3 of extremals results from a switching on $v$, according to Remark \ref{rem:2-v-switch-in-a-row}, the next switching is a $v$-switching. Moreover, since $\phi_{v}(t) = -\px(t)$ the next switching occurs at time 
$\Tsw{MmM}(\alpha) = \pi+\Tsw{Mm}(\alpha) = -\alpha+5\pi/2$.
%-------------------------------------------------------------------------------
% Switching curves
%-------------------------------------------------------------------------------
As for the Family 2, we get the corresponding parametric equation of the switching curve:
% %NE PAS EFFACER : CAS VMAX-VMIN
% \[
%    \left\{
%    \begin{aligned}
%       \xtswc{Mm}(\alpha)
%           &=   -(\VM+\vm)\cos\alpha\\
%       \ytswc{Mm}(\alpha)
%           &=  (2\vm-\VM)-(\VM+\vm)\sin\alpha
%    \end{aligned}
%    \right.,\quad
%    \alpha\in\left(\asing,\frac{3\pi}{2}\right).
% \]
\[
   \left\{
   \begin{aligned}
      \xtswc{Mm}(\alpha)
          &=   -(\VM+\vm)\cos\alpha\\
      \ytswc{Mm}(\alpha)
          &=  (2-\VM)-(\VM+\vm)\sin\alpha
   \end{aligned}
   \right.,\quad
   \alpha\in\left[\asing,\frac{3\pi}{2}\right).
\]
All results of Subsections \ref{S:Step1} and \ref{S:Step2} are shown in Fig. \ref{fig:1-switch}.
\begin{figure}[!ht]
\centering
  \def\svgwidth{0.6\columnwidth}
  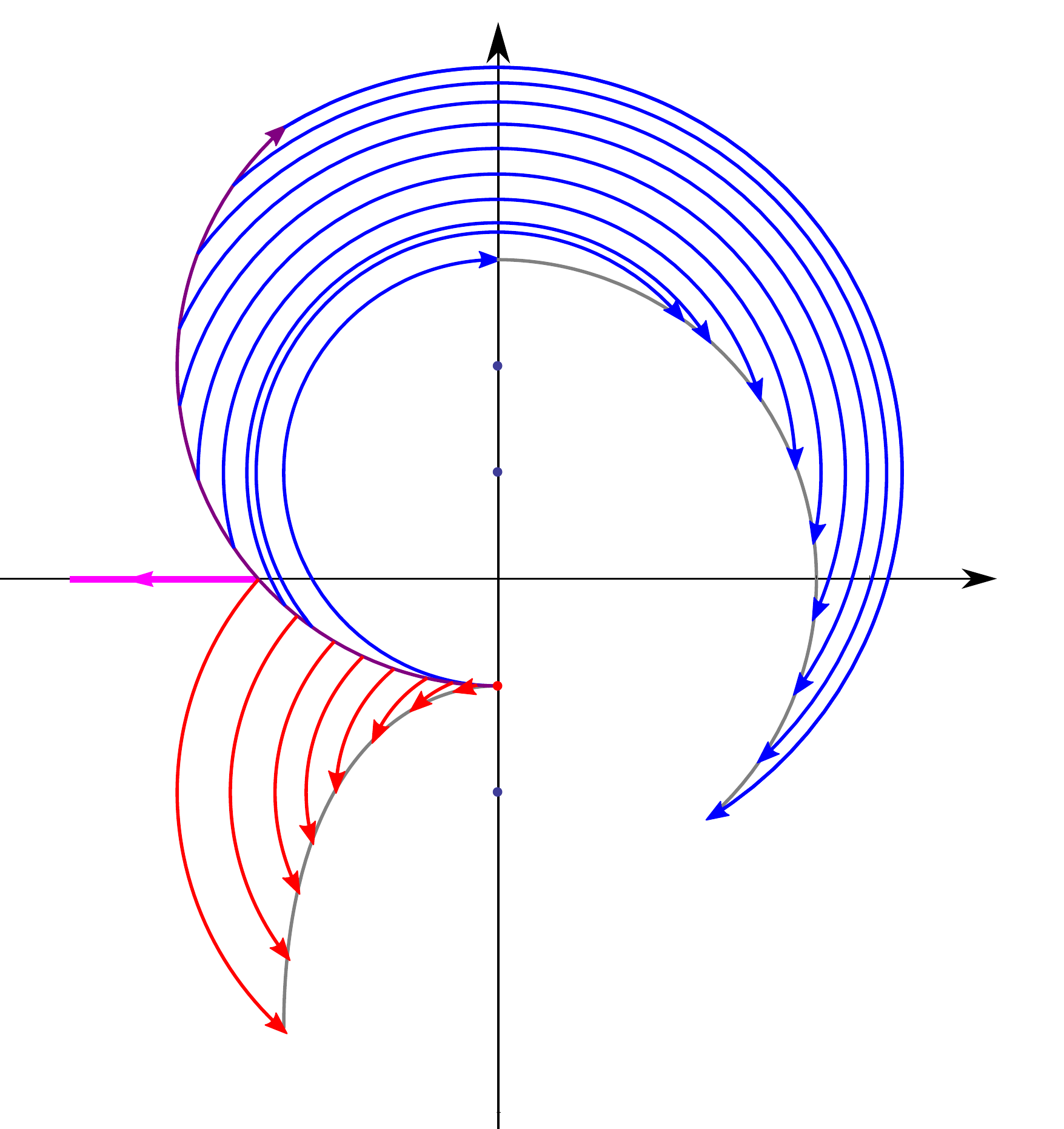
\protect\caption{Extremal trajectories starting from $\Xzerot$ and having one switching\label{fig:1-switch}}
\end{figure}

%-----------------------------------------------------------------------------------
%-----------------------------------------------------------------------------------
%
% Numerical simulations
% 
%-----------------------------------------------------------------------------------
%-----------------------------------------------------------------------------------
\section{Numerical simulations}\label{sec:applicationII}

%-----------------------------------------------------------------------------------
%
% Synthesis in the \UAV-based coordinates
%
%-----------------------------------------------------------------------------------
\subsection{Synthesis in the \texorpdfstring{\UAV\!\!}{}-based coordinates}

Numerical simulations have been made 
% %NE PAS EFFACER : CAS VMAX-VMIN
% with $\vm=1$ and $\VM=2$.
with $\p=2$.
For the sake of clarity, the detailed construction of the synthesis is postponed to the Appendix.

\medskip
During the construction of the optimal synthesis some special curves appears, namely
\begin{itemize}
\item
\textit{switching curves},  i.e. curves made of switching points;

\item
\textit{cut loci}, i.e.
sets of points where the extremal curves of problem \textrm{\Ptwo} lose global optimality.
\end{itemize}
In practice, switching curves and cut loci can be very difficult to compute.
In the following, some of them were computed numerically.
Following the algorithm described in Section \ref{sec:algorithm}, the time-optimal synthesis corresponding to the problem \textrm{\Pone} has been solved.
% The corresponding state feedback is given in Fig. \ref{fig:Final-synthesis} and Table \ref{tab:Colour-code}.
The corresponding (discontinuous) state-feedback is given in Fig. \ref{fig:Final-synthesis} and Table \ref{tab:Colour-code} as a partition of the reduced state space.
\startmodif
\begin{remark}
Note that the first version of the synthesis (Fig. \ref{fig:Final-synthesis}) presented without detail in the conference paper \cite{LagacheSerresAndrieuCDC15} is mistaken.
Indeed, in \cite{LagacheSerresAndrieuCDC15}, the synthesis is incomplete since the last arc $\gamma^{MPpPMm}$ of Family 2 is missing.
\end{remark}
\stopmodif
\begin{remark}
Notice that the minimum time function (as a function of $\xt$ and $\yt$) is not continuous along the abnormal trajectory.
As a consequence,
the optimality of the synthesis cannot be confirmed a posteriori using the verification theorem
(\cite[Theorem 2.13]{PiccoliSussmann2000_SIAM})
based on the notion of regular synthesis
as it was done in
\cite{SoueresBalluchiBicchi2001}
for the case
of the Dubins' system for tracking a rectilinear route in minimum time.
Indeed,
it is easy to see that the minimum time function is not
weakly upper semicontinuous (w.u.s.c. for short)
and thus does not match the hypothesis of
\cite[Theorem 2.13]{PiccoliSussmann2000_SIAM}.
To see this let $\tilde X_n \to \Xzerot$ as $n\to\infty$ where
$\tilde X_n=(\xt_n,\yt_n)$,
with  $\xt_n>0$,
belongs to the $v$-switching curve
passing through $\Xzerot$.
%and is such that.
Denote by $V(X)$ the minimum time to reach $\Xzerot$ from $X$.
We have
$V(\Xzerot)=0$. The $v$-switching occurring on the considered switching curve is a
switching from $\VM$ to $\vm$. 
Then, according to Remark \ref{rem:2-v-switch-in-a-row} and since 
$\phiv(t)=-\cos\left(\alpha+t\right)$,
\(
V(\tilde X_n)>\pi
\).
Consequently, $\lim_{n \to \infty}V(\tilde X_n)\geq\pi>0=V(\Xzerot)$ which shows that $V$ is not w.u.s.c. at $\Xzerot$.
\end{remark}
\begin{figure}
\centering
\def\svgwidth{0.8\columnwidth}
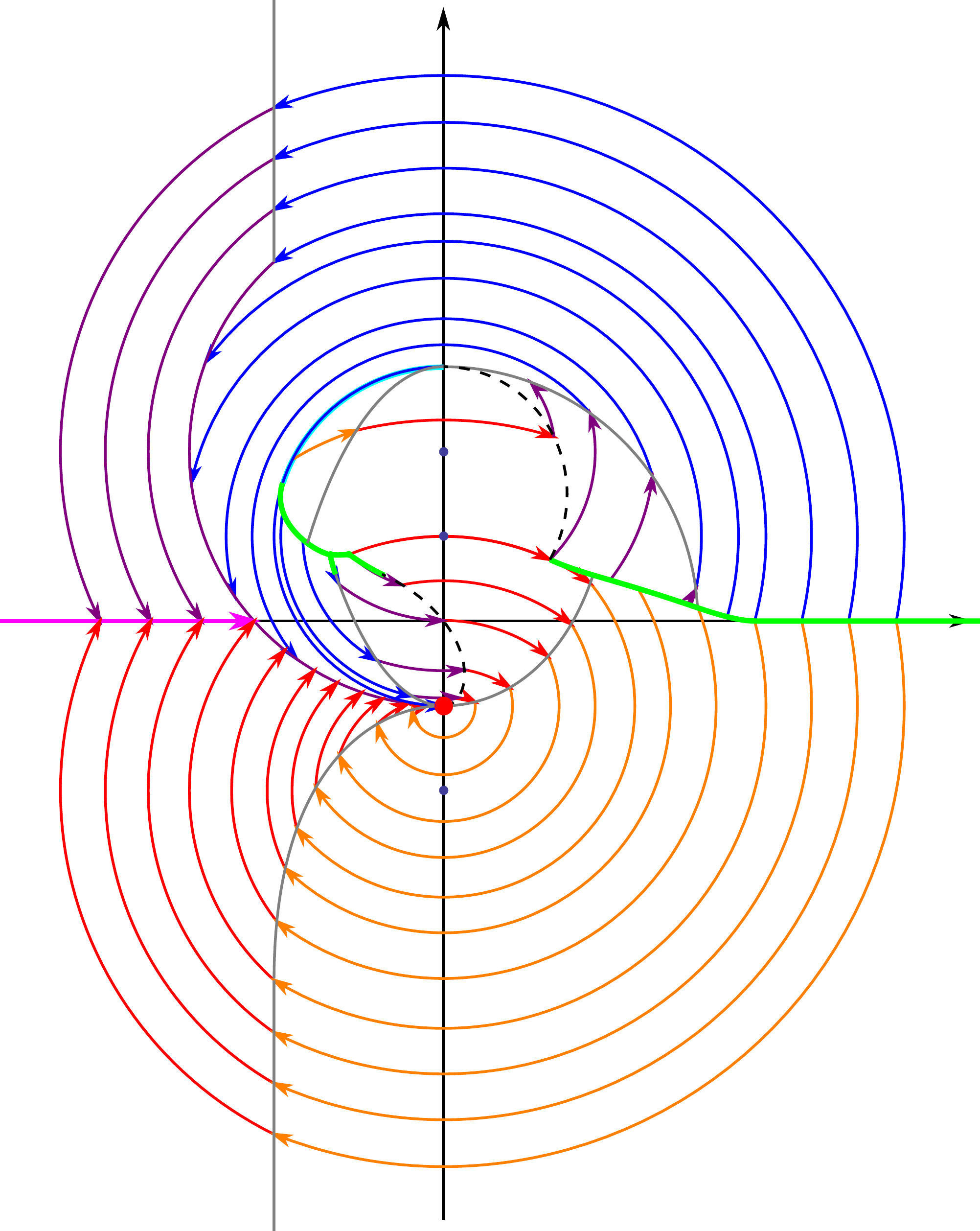
\caption{Time-optimal synthesis for the problem \textrm{\Pone}\label{fig:Final-synthesis}}
\end{figure}
\begin{table}
  \begin{centering}
    \begin{tabular}{|c|c|}
    \hline $(-1,\vm)$-bang arc & Blue\tabularnewline
    \hline $(1,\vm)$-bang arc & Orange\tabularnewline
    \hline $(-1,\VM)$-bang arc & Purple\tabularnewline
    \hline $(1,\VM)$-bang arc & Red\tabularnewline
    \hline {$u$-singular arc} & Magenta\tabularnewline
    \hline {$u$-switching curves} & Dashed black\tabularnewline
    \hline {$v$-switching curves} & Gray\tabularnewline
    \hline {Cut Locus} & Green\tabularnewline
    \hline {Abnormal Cut Locus} & Cyan\tabularnewline
    \hline 
    \end{tabular}
    \caption{Color convention of the optimal synthesis\label{tab:Colour-code}}
  \par\end{centering}
\end{table}
\subsection{Verification of the synthesis}
Using Lemma \ref{lem:one-switch-per-cadrant} for the problem {\textrm{\Pone}}, one can check the given synthesis. We have the functions:
\[
f(\qqt)=-\frac{\Delta_{Bu}(\qqt)}{\Delta_{A}(\qqt)}=-\frac{\yt}{\xt} \qquad \text{ and }\qquad
g(\qqt)=\frac{\Delta_{Bv}(\qqt)}{\Delta_{A}(\qqt)}=\frac{1}{\xt}.
\]
We define four domains according to the values of $f$ and $g$ (see Figure \ref{fig:Domains-verif}). We conclude that there is at most one switching on $u$ in each domains and at most one switching on $v$ in the union of domains 1 and 4 and in the union of 2 and 3. Moreover,
\begin{itemize}
\item in orthants 1 and 3: only a $u$-switching from 1 to -1 is allowed,

\item in orthants 2 and 4: only a $u$-switching from -1 to 1 is allowed,

\item in the union of orthants 1 and 4: only a $v$-switching from $\VM$ to $\vm$ is allowed,

\item in the union of orthants 2 and 3: only a $v$-switching from $\vm$ to $\VM$ is allowed.
\end{itemize}
One can easily check that the synthesis given in Figure \ref{fig:Final-synthesis} respects Lemma \ref{lem:one-switch-per-cadrant}.
\begin{figure}[!ht]
\centering
\def\svgwidth{0.5\columnwidth}
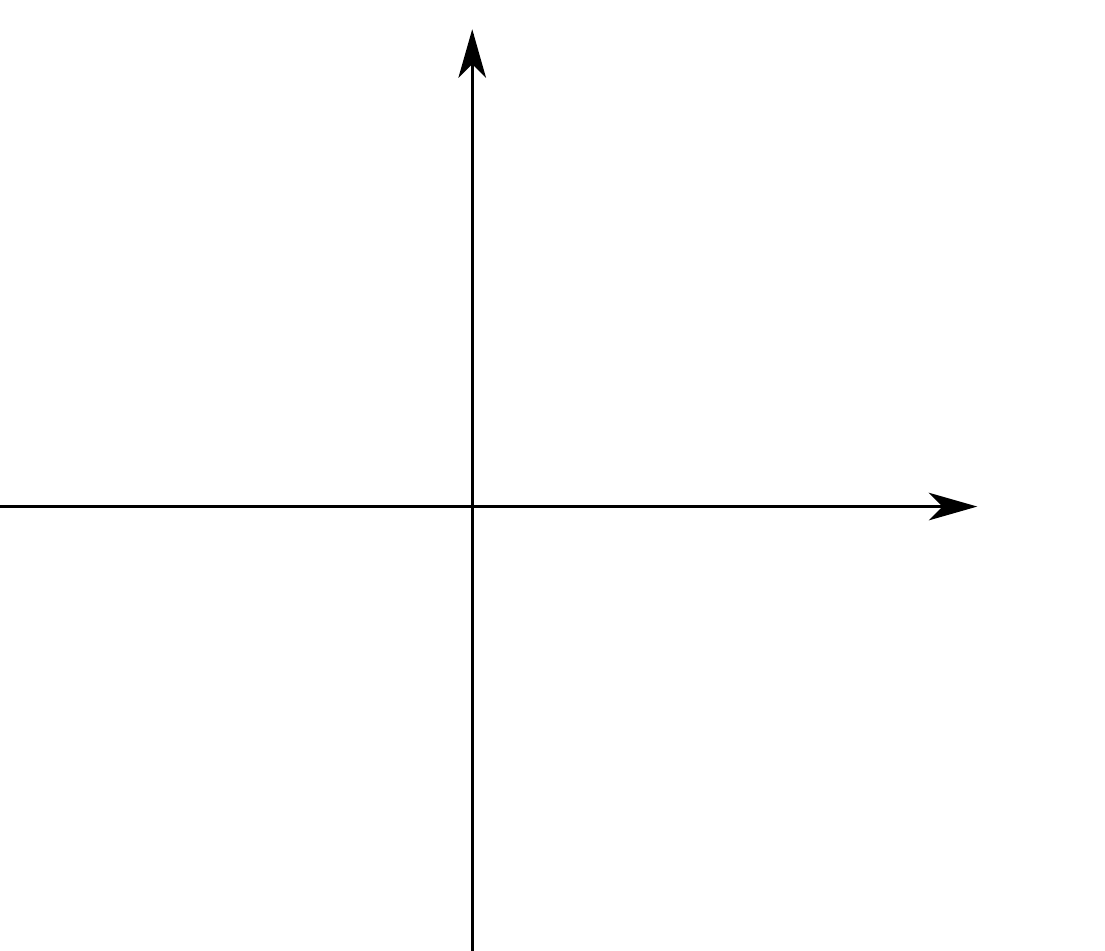
\caption{Domains on which at most one switching is possible\label{fig:Domains-verif}}
\end{figure}

%-----------------------------------------------------------------------------------
%
% Correspondence with problem \Pdub
%
%-----------------------------------------------------------------------------------
\subsection{Correspondence with problem \texorpdfstring{\Pdub}{}}

The solutions of problem \textrm{\Pdub} can be deduced from the solutions of problem \textrm{\Pone}.
In this section,
we display pairs of figures
(Fig. \ref{fig:Optimal-traj-depart-cut} and \ref{fig:Optimal-traj-heart})
showing two solutions of problem \textrm{\Pone}
(that start from the same point in the cut locus)
and the corresponding lifted solutions of problem \textrm{\Pdub}.
Notice that the singular trajectory go straight to the center of the target.
\begin{figure}
  \centering
  \def\svgwidth{1\columnwidth}
  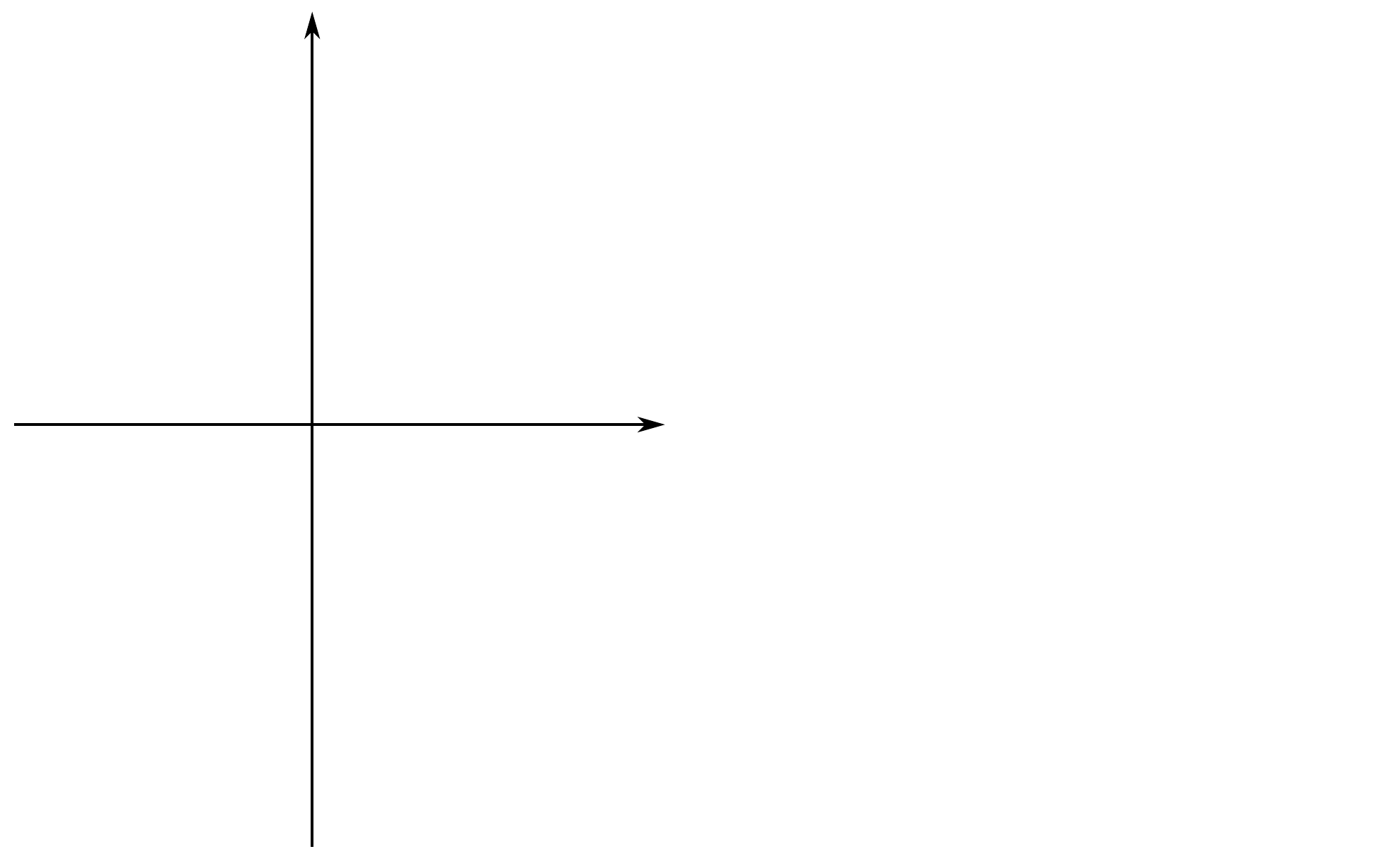
  \caption{Bang-bang-singular-bang optimal trajectories.
  Two optimal trajectories solutions of problem \textrm{\Pone} starting from the same point in the cut locus (left)
  and the corresponding optimal trajectories solutions of problem \textrm{\Pdub} (right)\label{fig:Optimal-traj-depart-cut}}
\end{figure}
\begin{figure}
  \centering
  \def\svgwidth{1\columnwidth}
  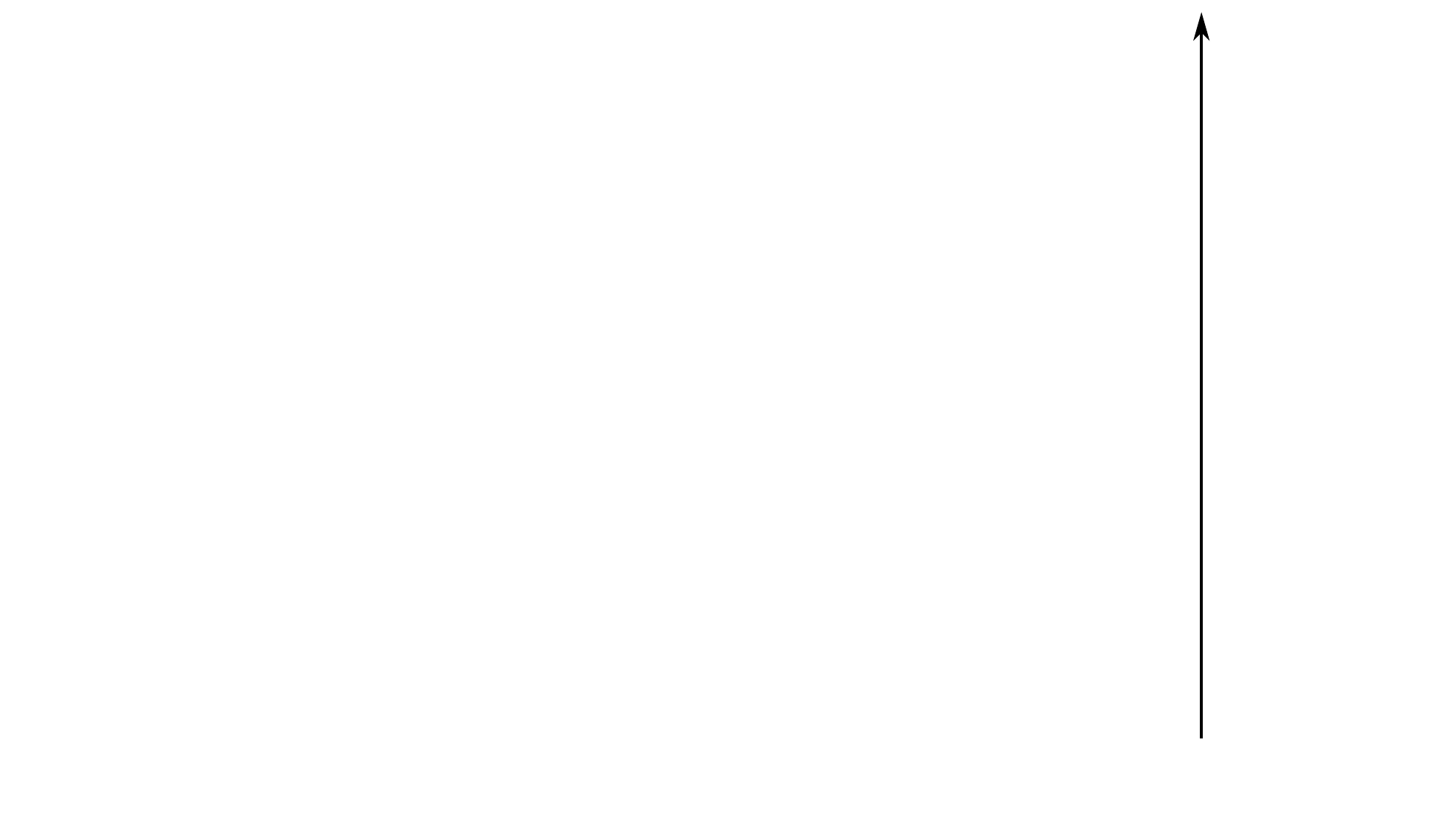
  \caption{Bang-bang-bang-bang-bang optimal trajectories and a bang-bang-bang-bang-bang-bang optimal trajectory.
  Three optimal trajectories solutions of problem \textrm{\Pone} starting from the same point in the cut locus (left)
  and the corresponding optimal trajectories solutions of problem \textrm{\Pdub} (right)\label{fig:Optimal-traj-heart}}
\end{figure}
\begin{figure}
  \centering
  \def\svgwidth{1\columnwidth}
  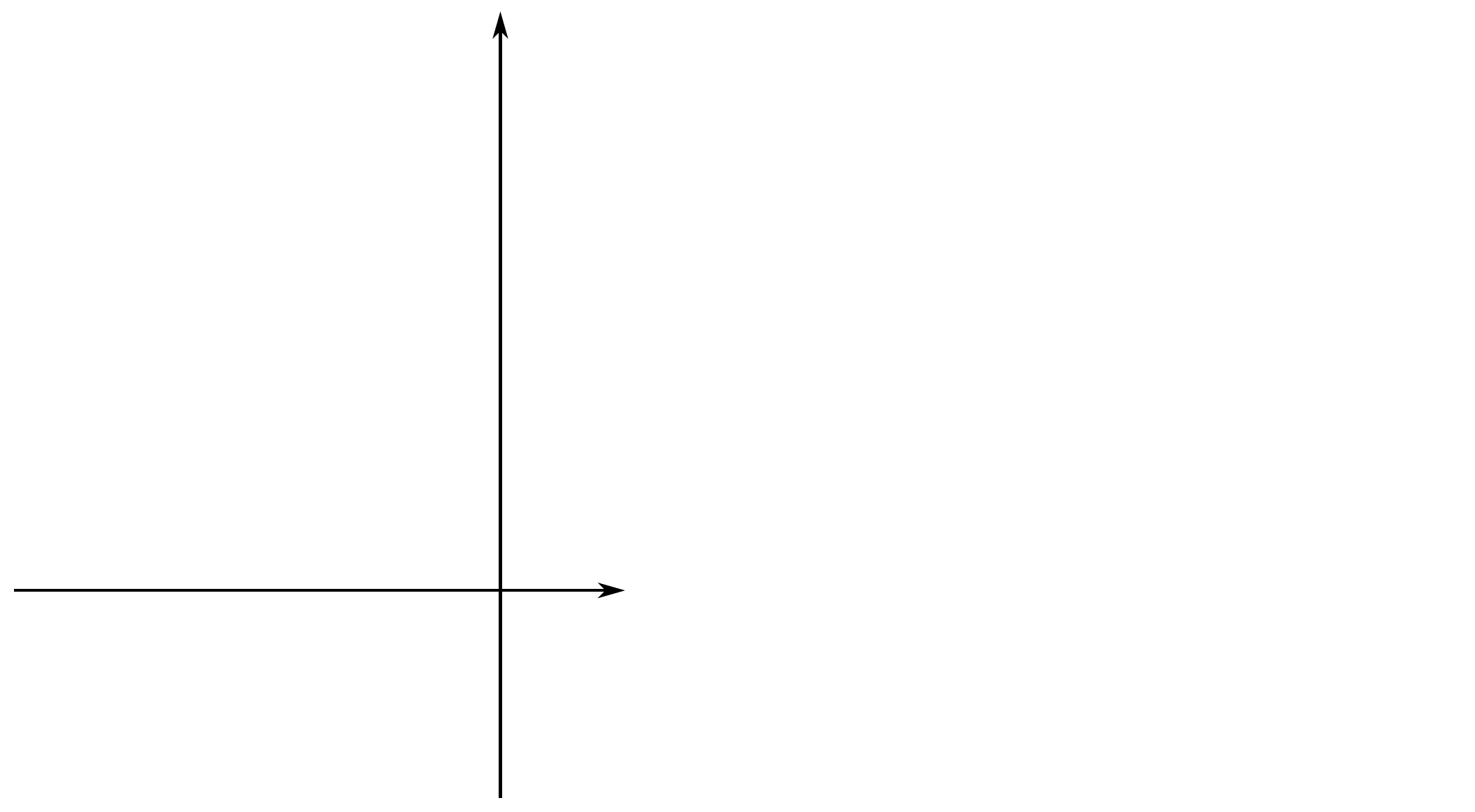
  \caption{The abnormal trajectory of problem \textrm{\Pone} and the corresponding trajectory for problem \textrm{\Pdub} (right)\label{fig:Optimal-traj-abnormal}}
\end{figure}

%-----------------------------------------------------------------------------------
%
% Stability of the optimal feedback control
%
%-----------------------------------------------------------------------------------
\subsection{Stability of the optimal feedback control}

We have shown that the discontinuous feedback control law is a time-optimal feedback for system (\ref{eq:dubinstilde}).
However, the optimality of the feedback does not imply Lyapunov stability
(see e.g. the beautiful and simple example in \cite{Boscain2014}.
In our case we have the following proposition.

\begin{proposition}
The time-optimal feedback law renders system (\ref{eq:dubinstilde}) globally asymptotically stable.
\end{proposition}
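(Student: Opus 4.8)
The plan is to verify the two constituent properties of global asymptotic stability of the equilibrium $\Xzerot$ of the closed-loop system \eqref{eq:dubinstilde} separately: global attractivity and Lyapunov stability.

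Global attractivity is immediate and in fact can be sharpened to finite-time convergence. For any initial condition, the existence of minimizers (Filippov's theorem) together with controllability guarantees that the minimum time to reach $\Xzerot$ is finite. Since, by construction, the trajectory produced by the optimal feedback coincides with an optimal trajectory of problem \textrm{\Pone}, it reaches $\Xzerot$ at that finite time. Hence every closed-loop trajectory converges to $\Xzerot$, which is more than global attractivity.

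The delicate part is Lyapunov stability, precisely because the minimum-time function $V$ is discontinuous at $\Xzerot$ (as shown in the preceding remark) and therefore cannot be used as a Lyapunov function. I would argue geometrically using the squared distance $W(\xt,\yt)=\tfrac12\big(\xt^2+(\yt+\rmin)^2\big)$. A direct computation along \eqref{eq:dubinstilde} gives
\begin{equation*}
  \dot W = \xt\,(v-u\rmin),
\end{equation*}
which vanishes identically for the equilibrium control $(u,v)=(\umax,\vmin)$; linearising \eqref{eq:dubinstilde} at $\Xzerot$ under this control yields a pure rotation of angular velocity $\umax$, so that trajectories driven by $(\umax,\vmin)$ are circles centred at $\Xzerot$ along which $W$ is \emph{exactly} conserved. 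The goal is then the standard $\varepsilon$–$\delta$ statement: for every $\varepsilon>0$ there is $\delta>0$ such that $W(X_0)\le\delta$ forces $W(X(t))\le\varepsilon$ for all $t\ge0$.

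To carry this out I would use the explicit local structure of the synthesis near $\Xzerot$, splitting a small neighbourhood into two regions. Off the $v$-switching curve through $\Xzerot$ the function $V$ is continuous, so starting points close to $\Xzerot$ have small minimum time; since $|\dot X|$ is bounded near $\Xzerot$, the whole trajectory is short and the excursion of $W$ is controlled by the elementary bound $\dot W\le C\sqrt{W}$ over that short interval. The genuinely delicate trajectories are those issued from the switching curve with $\xt>0$, for which $V>\pi$: there a $v$-switching $\VM\to\vm$ occurs at the starting point, and by Remark \ref{rem:2-v-switch-in-a-row} no $u$-switching can intervene before the next $v$-switching, which (since $\phiv(t)=-\cos(\alpha+t)$) happens exactly a time $\pi$ later. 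The key point, which is the main obstacle, is to show that the sign $f=-\yt/\xt>0$ near $\Xzerot$ forces, through Lemma \ref{lem:one-switch-per-cadrant} and the impossibility of a $\uv$-switching (Lemma \ref{lem:u-v-switchings_cannot_occur}), the control to equal $(\umax,\vmin)$ on this long phase — the alternative value $-\umax$ would move the trajectory away from $\Xzerot$ at speed $2\vmin$ and is thereby excluded. Granting this, the long phase is exactly a half-turn along a circle centred at $\Xzerot$ on which $W$ is conserved, so the trajectory stays at distance $\sqrt{2\,W(X_0)}$, and the remaining short $\VM$-arc that brings it into $\Xzerot$ is again controlled by $W(X_0)$. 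This reconciles the discontinuity of $V$ (arbitrarily small-amplitude trajectories may have minimum time bounded below by $\pi$) with Lyapunov stability, and combined with the finite-time convergence above yields global asymptotic stability.
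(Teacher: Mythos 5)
Your central mechanism is the right one---the identity $\dot W=\xt\,(v-u\rmin)$ along \eqref{eq:dubinstilde}, which shows that $(1,\vm)$-arcs (your $(\umax,\vmin)$) preserve the distance to $\Xzerot$, is exactly what underlies the paper's proof---but your case decomposition rests on a false premise, and it fails precisely at the initial points that are genuinely dangerous. Near $\Xzerot$, the minimum-time function $V$ is discontinuous at $\Xzerot$ not only along the $v$-switching curve but from an entire open region: every nearby point of the $(1,\vm)$-region (the points ``below'' the curve, which sit in the middle of the half-turn) has $V$ close to $\pi-\theta$, where $-\theta$ is its angular position about $\Xzerot$, and every nearby point of the $(1,\VM)$-region with $\xt>0$ (the points ``above'' the curve) has $V>\pi$, since its trajectory must still reach the curve \eqref{SC:MPp} and then perform the whole half-turn. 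Thus $V(X)\not\to0$ as $X\to\Xzerot$ from any direction pointing into these regions, and your case (i) argument (``$V$ continuous off the curve, hence small time, hence small excursion via $\dot W\le C\sqrt W$'') collapses on an open set of initial conditions, not on a negligible one. Worse, the points left untreated---those of the $(1,\VM)$-region with $\xt>0$---are exactly the ones whose optimal trajectories begin with an arc along which $\dot W=\xt(\VM-\vm)>0$, i.e.\ the distance to $\Xzerot$ really grows, and your proposal contains no estimate whatsoever for this excursion. Supplying that estimate is the actual content of the paper's proof: near $\Xzerot$ the distance increases only along $(1,\VM)$-arcs; such an arc is a circular arc centred at $(0,-\VM)$ and stops on the switching curve \eqref{SC:MPp}, which lies above the line $\yt=-\vm$; elementary circle geometry then gives $\sup_{t\ge0}\|X(t;Z)-\Xzerot\|\le\sqrt{\delta(2(\VM-\vm)+\delta)}$ whenever $\|Z-\Xzerot\|\le\delta\le\vm$, from which an explicit $\delta(\epsilon)$ follows.

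A second, independent defect is the step you yourself call the ``main obstacle'' and then assume (``granting this''). The justification you sketch---that $u=-\umax$ would move the trajectory away from $\Xzerot$ and ``is thereby excluded''---is not valid: moving away from the target is not by itself incompatible with time-optimality, as the synthesis itself shows (the optimal $(1,\VM)$-arcs with $\xt>0$ do move away). What actually pins down the control $(1,\vm)$ on the long phase is structural: the extremal through a point of that curve arrives with $u=1$, a simultaneous $\uv$-switching is excluded by Lemma \ref{lem:u-v-switchings_cannot_occur}, and by Remark \ref{rem:2-v-switch-in-a-row} no $u$-switching can occur before the next $v$-switching, which happens a time $\pi$ later because $\phiv(t)=-\cos(\alpha+t)$. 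With that repair, and---more importantly---with the missing bound on the distance-increasing $(1,\VM)$-arcs added, your outline would essentially reduce to the paper's argument.
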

\begin{proof}
The convergence to the equilibrium $\Xzerot$ follows from the optimality of the synthesis
(actually, the equilibrium is even reached in finite time).
We prove the stability by a direct application of Lyapunov's definition (in the reduced space).
A time-optimal trajectory starting in the ball
\(
\{Z \mid \|Z-\Xzerot\| \le \delta \le \vm\}
\)
may escape this ball if and only if it contains a bang arc corresponding to the control
\(
(u,v)=(1,\VM).
\)
It is an easy matter to see that for such a trajectory
the norm
\(
\|X(t;Z)-\Xzerot\|
\)
increases only along its $(1,\VM)$-bang arc.
Moreover,
this bang arc stops at the switching curve defined by \eqref{SC:MPp}, which is above the horizontal line
\(
\{(\xt,\yt)\in\R \mid \yt=-\vm\}.
\)
Hence,
one easily check that for every $\delta \le \vm$
\[
  \sup_{\|Z-\Xzerot\|\le\delta}
  \sup_{t\in \R_+}
  \left\{
  \|X(t;Z)-\Xzerot\|\right\}
  \le \sqrt{\delta(2(\VM-\vm)+\delta)}.
\]
Consequently,
for every $\epsilon>0$,
choosing either
\(
\delta=
\vm
\)
% %NE PAS EFFACER : CAS VMAX-VMIN
% if $\epsilon \ge \vm(2(\VM-\vm)+\vm)$
if $\epsilon \ge (2(\VM-\vm)+\vm)$
or
\(
\delta=
\sqrt{((\VM-\vm)^2+\epsilon^2)} - (\VM-\vm)
\)
otherwise
we obtain
\[
\forall~\epsilon>0 \quad
\exists~\delta>0 \mid
\|X(t;Z)-\Xzerot\|\le\epsilon, \quad
\forall~t\ge0, \quad
\forall~Z\in B(\Xzerot,\delta).
\]
\end{proof}

%-----------------------------------------------------------------------------------
%-----------------------------------------------------------------------------------
%
% Conclusion
%
%-----------------------------------------------------------------------------------
%-----------------------------------------------------------------------------------
\section{Conclusion}

\startmodif
In this paper we have solved a time-minimal control problem for a kinematic model describing a \UAV
flying at constant altitude with controls on the steering angle and on the linear velocity.
\stopmodif
Thanks to a change of coordinates applied to the three-dimensional Dubins' system,
we could simplify the problem and use (and extend) the existing theory of time-optimal syntheses
for two-dimensional single input affine control systems
to two-dimensional affine control systems with two inputs.
We gave the time-optimal synthesis as a state-feedback law such that the target is reached optimally
in finite time.
\startmodif
Note however that our kinematic model does not actually correspond to a realistic dynamic, as the velocities may not be continuous. A more challenging problem would be to consider a model with controls on accelerations. 
\stopmodif

%%%%%% BEGIN Rest of Synthesis %%%%%%%%%
% \appendix
%-----------------------------------------------------------------------------------
%-----------------------------------------------------------------------------------
%
% Appendix: construction details of the synthesis
% 
%-----------------------------------------------------------------------------------
%-----------------------------------------------------------------------------------
\section*{Appendix: construction details of the synthesis}\label{S:Appendix}
\renewcommand{\theequation}{A.\arabic{equation}}
\renewcommand{\thesection}{A}

%-----------------------------------------------------------------------------------
%
% 3rd arc
%
%-----------------------------------------------------------------------------------
\subsection{3rd arc}

%-----------------------------------------------------------------------------------
%
% Family 1
%
%-----------------------------------------------------------------------------------
\subsubsection{Family 1}

In this subsection, we study the trajectories bifurcating from the singular trajectory 
$\traj{Ms}$. A trajectory exiting the turnpike has either the control $(-1,\VM)$ or the control $(1,\VM)$. 

From each point 
% %NE PAS EFFACER : CAS VMAX-VMIN
% \[
%    \left\{
%    \begin{aligned}
%       \xt(\tau) &=-\VM\tau-\sqrt{\vm\left(2\VM+\vm\right)}+\VM\tsing\\
%       \yt(\tau) &=0
%    \end{aligned}
%    \right.,
%    \quad\forall~\tau>\tsing.
% \]
\[
   \left\{
   \begin{aligned}
      \xt(\tau) &=-\VM\tau-\sqrt{\left(2\VM+\vm\right)}+\VM\tsing\\
      \yt(\tau) &=0
   \end{aligned}
   \right.,
   \quad\forall~\tau>\tsing,
\]
two trajectories bifurcate. This family of trajectories is then parametrized by $\tau$ instead of $\alpha$. For $u\in\{-1,1\}$ and for all $t>\tau$ the corresponding trajectory is
% %NE PAS EFFACER : CAS VMAX-VMIN
% \[
%    \left\{
%    \begin{aligned}
%       \xt(t) &=
%       \left(-\sqrt{\vm(2\VM+\vm)}-\VM \tau + \VM \tsing\right) \cos(t-\tau)
%       -\VM \sin(t-\tau)
%       \\
%       \yt(t) &=
%       u\left(-\VM +
%             \left(-\sqrt{\vm(2\VM+\vm)}-\VM \tau + \VM \tsing\right) \sin(t-\tau)
%             +\VM\cos(t-\tau)\right),
%    \end{aligned}
%    \right.
% \]
\[
   \left\{
   \begin{aligned}
      \xt(t) &=
      \left(-\sqrt{(2\VM+\vm)}-\VM \tau + \VM \tsing\right) \cos(t-\tau)
      -\VM \sin(t-\tau)
      \\
      \yt(t) &=
      u\left(-\VM +
            \left(-\sqrt{(2\VM+\vm)}-\VM \tau + \VM \tsing\right) \sin(t-\tau)
            +\VM\cos(t-\tau)\right),
   \end{aligned}
   \right.
\]
% and
% \[
%    \left\{
%    \begin{aligned}
%       \xt(t) &=
%       \left(-\sqrt{\vm(2\VM+\vm)}-\VM \tau + \VM \tsing\right) \cos(t-\tau)
%       -\VM \sin(t-\tau)
%       \\
%       \yt(t) &=
%       \VM -
%       \left(-\sqrt{\vm(2\VM+\vm)}-\VM \tau + \VM \tsing\right) \sin(t-\tau)
%       -\VM\cos(t-\tau)
%    \end{aligned}
%    \right..
% \]
the covector is
\[
   \left\{
     \begin{aligned}
       \px(t) &=  -u\cos(t-\tau)  \\
       \py(t) &=  -\sin(t-\tau),
     \end{aligned}
   \right.
\]
% and
% \[
%    \left\{
%      \begin{aligned}
%        \px(t) &=  \cos(t-\tau)  \\
%        \py(t) &=  -\sin(t-\tau)
%      \end{aligned}
%    \right.,
%     \quad\forall~t>\tau.
% \]
the switching functions, defined by \eqref{eq:switching_function_u} and \eqref{eq:switching_function_v}, are
\begin{eqnarray*}
    \phi_{u}(t) 
    % & = & \py(t)\xt(t)-\px(t)\yt(t)\\
     & = & 
     \VM - \VM \cos(t-\tau),   \\
     \phi_{v}(t)
      % & = & -\px(t)    \\
     & = & 
     u\cos(t-\tau).
\end{eqnarray*}
% and
% \begin{eqnarray*}
%     \phi_{u}(t) & = & \py(t)\xt(t)-\px(t)\yt(t)\\
%      & = & 
%      \VM - \VM \cos(t-\tau)   \\
%      \phi_{v}(t) & = & -\px(t)    \\
%      & = & 
%      -\cos(t-\tau)
% \end{eqnarray*}
Since in each case $\phiu > 0$, the next switching occurs on the control $v$ at time $\Tsw{MsPp}(\tau)=\Tsw{MsMm}(\tau)=\tau+\pi/2$.

The parametric equations of the corresponding switching curve is
% \[
%    \left\{
%    \begin{aligned}
%       \xt^{\textrm{SC}}(\tau)
%       &=
%       -\VM                           \\
%       \yt^{\textrm{SC}}(\tau)
%       &=
%       -\VM -\sqrt{\vm(2\VM+\vm)}-\VM (\tau+\pi/2) + \VM \tsing
%    \end{aligned}
%    \right.
% \]
% %NE PAS EFFACER : CAS VMAX-VMIN
% \[
%    \left\{
%    \begin{aligned}
%       \xtswc{Msi}(\tau)
%       &=
%       -\VM                           \\
%       \ytswc{Msi}(\tau)
%       &=
%       u(-\VM -\sqrt{\vm(2\VM+\vm)}-\VM\tau + \VM \tsing),
%    \end{aligned}
%    \right.
% \]
\[
   \left\{
   \begin{aligned}
      \xtswc{Msi}(\tau)
      &=
      -\VM                           \\
      \ytswc{Msi}(\tau)
      &=
      u(-\VM -\sqrt{(2\VM+\vm)}-\VM\tau + \VM \tsing),
   \end{aligned}
   \right.
\]
% and
% \[
%    \left\{
%    \begin{aligned}
%       \xtswc{MsM}(\tau)
%       &=
%       -\VM                           \\
%       \ytswc{MsM}(\tau)
%       &=
%       \VM  + \sqrt{\vm(2\VM+\vm)}+\VM\tau - \VM \tsing
%    \end{aligned}
%    \right.
% \]
% for all $\tau>\tsing$.
with $i=\cP$ (resp. $\cM$) for $u=1$ (resp. $-1$).

%-----------------------------------------------------------------------------------
%
% Family 2
%
%-----------------------------------------------------------------------------------
\subsubsection{Family 2}

In this subsection, we study the trajectories $\traj{MPp}$ switching from $\traj{MP}$ 
at a time $\Tsw{MPp}(\alpha)$ with a control $(1,\vm)$ and $\alpha\in\left(\frac{\pi}{2},\asing\right]$. These trajectories start from the parametrized curve
\begin{equation}\label{SC:MP}
   \left\{
   \begin{aligned}
   \xtswc{MP}(\alpha) &=  (\VM+\vm)\cos\alpha
   \\
   \ytswc{MP}(\alpha) &=  -\VM-(\VM+\vm)\sin\alpha
                   +2\VM\sqrt{1-\left(\VMvmVMcos\right)^{2}},
   \end{aligned}
   \right.
\end{equation}
and, for all $t \ge \Tsw{MPp}(\alpha)$, have coordinates
\begin{equation}\label{Traj:MPp}
   \left\{
   \begin{aligned}
      \xt(t) &=
          \xtswc{MP}(\alpha) \cos(t-\Tsw{MPp}(\alpha))-
          (\vm+\ytswc{MP}(\alpha))\sin(t-\Tsw{MPp}(\alpha))
      \\
      \yt(t) &=
       (\vm+\ytswc{MP}(\alpha))\cos(t-\Tsw{MPp}(\alpha))+
      \xtswc{MP}(\alpha)\sin(t-\Tsw{MPp}(\alpha))-\vm %+,
   \end{aligned}
   \right.
%   \quad \forall t \geq \Tsw{MPp}(\alpha)
\end{equation}
with $\Tsw{MPp}(\alpha)=-\pi/2-\alpha+2\arccos\left(\VMvmVMcos\right)=-\frac{\pi}{2}+\alpha+2\Tsw{MP}(\alpha)$
% \[
%    \left\{
%    \begin{aligned}
%       \xt(t) &=
%           (\VM+\vm)\cos\alpha \cos(t-\Tsw{MPp}(\alpha))-  \\ & 
%           (\vm-\VM-(\VM+\vm)\sin\alpha
%           +2\VM\sqrt{1-\left(\VMvmVMcos\right)^{2}})
%           \sin(t-\Tsw{MPp}(\alpha))
%       \\
%       \yt(t) &=
%            -\vm + (\vm-\VM-(\VM+\vm)\sin\alpha
%            +2\VM\sqrt{1-\left(\VMvmVMcos\right)^{2}})\\ &
%                    \cos(t-\Tsw{MPp}(\alpha))
%                    +(\VM+\vm)\cos\alpha\sin(t-\Tsw{MPp}(\alpha))
%    \end{aligned}
%    \right.,
% \]
% \[
%    \left\{
%    \begin{aligned}
%       \xt(t) &= (\VM-\vm)\sin(t-\Tsw{MPp}(\alpha)) \\ &
%                 - 2\VM\sqrt{1-\left(\VMvmVMcos\right)^{2}}
%                 \sin(t-\Tsw{MPp}(\alpha)) \\ & 
%                 +(\VM+\vm)\cos(t-\alpha-\Tsw{MPp}(\alpha))
%       \\
%       \yt(t) &= -\vm -(\VM-\vm)\cos(t-\Tsw{MPp}(\alpha)) \\ &
%                 + 2\VM\sqrt{1-\left(\VMvmVMcos\right)^{2}}
%                 \cos(t-\Tsw{MPp}(\alpha)) \\ &
%                 +(\VM+\vm)\sin(t-\alpha-\Tsw{MPp}(\alpha))
%    \end{aligned}
%    \right.,
% \]
, i.e.
\[
   \left\{
   \begin{aligned}
      \xt(t) &=
        \!\begin{multlined}[t] (\VM-\vm)\cos\left(t+\alpha-2\acos\right) \\[.1em]
                - 2\VM\sin\left(t+\alpha-\acos\right) \\[.1em]
                +(\VM+\vm) \sin\left(t+2\alpha-2\acos\right)
        \end{multlined}\\[.2em]
      \yt(t) &= 
       \!\begin{multlined}[t] -\vm -(\VM-\vm)\sin\left(t+\alpha-2\acos\right) \\[.1em]
                + 2\VM\cos\left(t+\alpha-\acos\right) \\[.1em]
                -(\VM+\vm)\cos\left(t+2\alpha-2\acos\right).
    \end{multlined}
   \end{aligned}
   \right.
\]
By continuity of the covector, we have $\pp(\Tsw{MPp}(\alpha)) = (0,1)$,
the covector is then given by
\[
   \left\{
   \begin{aligned}
    \px(t)  &=  -\sin(t-\Tsw{MPp}(\alpha))\\
    \py(t)  &=  \cos(t-\Tsw{MPp}(\alpha))
   \end{aligned},
   \qquad\forall~t \geq \Tsw{MPp}(\alpha)
   \right..
\]

Since the previous switching was a $v$-switching from $\VM$ to $\vm$, the next switching will be a $v$-switching (see Remark \ref{rem:2-v-switch-in-a-row}) and will occur after a duration $\pi$ since ($\phiu(t)=\px(t)$ is a sinusoidal function). We then have 
\(
  \Tsw{MPpP}(\alpha)= \Tsw{MPp}(\alpha)+\pi
        =\pi/2-\alpha+2\arccos\left(\VMvmVMcos\right)
\).\\
The switching curve is then given by
\[
   \left\{
   \begin{aligned}
      \xtswc{MPp}(\alpha)
          &=   \xt(\Tsw{MPpP}(\alpha))=\xt(\Tsw{MPp}(\alpha)+\pi)\\
      \ytswc{MPp}(\alpha)
          &=  \yt(\Tsw{MPpP}(\alpha))=\yt(\Tsw{MPp}(\alpha)+\pi).
   \end{aligned}
   \right.
\]
which, using \eqref{Traj:MPp}, becomes
% %NE PAS EFFACER : CAS VMAX-VMIN
% \begin{equation}\label{SC:MPp}
%    \left\{
%    \begin{aligned}
%       \xtswc{MPp}(\alpha) &=
%           -\xtswc{MP}(\alpha)
%       \\
%       \ytswc{MPp}(\alpha) &=
%       -2\vm -\ytswc{MP}(\alpha),
%    \end{aligned}
%    \right.
% \end{equation}
\begin{equation}\label{SC:MPp}
   \left\{
   \begin{aligned}
      \xtswc{MPp}(\alpha) &=
          -\xtswc{MP}(\alpha)
      \\
      \ytswc{MPp}(\alpha) &=
      -2-\ytswc{MP}(\alpha),
   \end{aligned}
   \right.
\end{equation}
where $\xtswc{MP}$ and $\ytswc{MP}$ are given by \eqref{SC:MP}.
% \[
%    \left\{
%    \begin{aligned}
%     \xtswc{MPp}(\alpha) &=  -\left(\VM+\vm\right)\cos\alpha
%    \\
%    \ytswc{MPp}(\alpha) &=  (\VM-2\vm) + (\VM+\vm)\sin\alpha
%                    -2\VM\sqrt{1-\left(\VMvmVMcos\right)^{2}}
%    \end{aligned}
%    \right..
% \]

%-----------------------------------------------------------------------------------
%
% Family 3
%
%-----------------------------------------------------------------------------------
\subsubsection{Family 3}

In this subsection, we study the trajectories $\traj{MmM}$ switching from $\traj{Mm}$ 
at a time $\Tsw{MmM}(\alpha) = \pi+\Tsw{Mm}(\alpha) = -\alpha+5\pi/2$ with a control 
$(-1,\VM)$ and $\alpha\in\left[\asing,\frac{3\pi}{2}\right)$. These trajectories starts from the parametrized curve
% %NE PAS EFFACER : CAS VMAX-VMIN
% \[
%    \left\{
%    \begin{aligned}
%       \xtswc{Mm}(\alpha)
%           &=   -(\VM+\vm)\cos\alpha\\
%       \ytswc{Mm}(\alpha)
%           &=  (2\vm-\VM)-(\VM+\vm)\sin\alpha.
%    \end{aligned}
%    \right.
% \]
\[
   \left\{
   \begin{aligned}
      \xtswc{Mm}(\alpha)
          &=   -(\VM+\vm)\cos\alpha\\
      \ytswc{Mm}(\alpha)
          &=  (2-\VM)-(\VM+\vm)\sin\alpha.
   \end{aligned}
   \right.
\]
For all $t \geq \Tsw{MmM}(\alpha)$, the trajectories are
\[
   \left\{
   \begin{aligned}
      \xt(t) &= \xtswc{Mm}(\alpha)\cos(-t+\Tsw{MmM}(\alpha))
      +(\VM - \ytswc{Mm}(\alpha))\sin(-t+\Tsw{MmM}(\alpha))
      \\
      \yt(t) &=  \VM - (\VM - \ytswc{Mm}(\alpha))\cos(-t+\Tsw{MmM}(\alpha))
          +\xtswc{Mm}(\alpha)\sin(-t+\Tsw{MmM}(\alpha)),
   \end{aligned}
   \right.
\]
% \[
%    \left\{
%    \begin{aligned}
%       \xt(t) &= -(\VM+\vm)\cos\alpha\cos(-t+\Tsw{MmM}(\alpha)) \\ &
%       +(2(\VM - \vm)+(\VM+\vm)\sin\alpha)\sin(-t+\Tsw{MmM}(\alpha))
%       \\
%       \yt(t) &=  \VM - 
%           (2(\VM -\vm)+(\VM+\vm)\sin\alpha)
%           \cos(-t+\Tsw{MmM}(\alpha)) \\&
%           -(\VM+\vm)\cos\alpha\sin(-t+\Tsw{MmM}(\alpha))
%    \end{aligned}
%    \right.
% \]
% \[
%    \left\{
%    \begin{aligned}
%       \xt(t) &= -(\VM+\vm)\cos(-t+\alpha+\Tsw{MmM}(\alpha))+
%       2(\VM-\vm)\sin(-t+\Tsw{MmM}(\alpha))
%       \\
%       \yt(t) &=  \VM - 
%           2(\VM-\vm)
%           \cos(-t+\Tsw{MmM}(\alpha))-
%           (\VM+\vm)\sin(-t+\alpha+\Tsw{MmM}(\alpha))
%    \end{aligned}
%    \right.
% \]
% \[
%    \left\{
%    \begin{aligned}
%       \xt(t) &= -(\VM+\vm)\cos(t-5\pi/2)-
%       2(\VM-\vm)\sin(t+\alpha-5\pi/2)
%       \\
%       \yt(t) &=  \VM - 
%           2(\VM-\vm)
%           \cos(t+\alpha-5\pi/2)+
%           (\VM+\vm)\sin(t-5\pi/2)
%    \end{aligned}
%    \right.
% \]
which, after simplifications, yields
\[
   \left\{
   \begin{aligned}
      \xt(t) &= -(\VM+\vm)\sin t+
      2(\VM-\vm)\cos(t+\alpha)
      \\
      \yt(t) &=  \VM - 
          2(\VM-\vm)
          \sin(t+\alpha)-
          (\VM+\vm)\cos t.
   \end{aligned}
   \right.
\]
By continuity, $\pp(\Tsw{MmM}(\alpha))  =  (0,1)$
and for all $t \geq \Tsw{MmM}(\alpha)$ the covector is
\[
   \left\{
   \begin{aligned}
      \px(t)
      &=\sin(t-\Tsw{MmM}(\alpha))=\sin(t+\alpha-5\pi/2)=\cos(t+\alpha)\\
      \py(t)
      &=\cos(t-\Tsw{MmM}(\alpha))=\cos(t+\alpha-5\pi/2)=-\sin(t+\alpha).
   \end{aligned}
   \right.
\]
The switching functions, defined by \eqref{eq:switching_function_u} and \eqref{eq:switching_function_v}, are then
\begin{eqnarray*}
    \phiu(t) 
    % & = & \py(t)\xt(t)-\px(t)\yt(t)\\
    % &=& (-\sin(t+\alpha))(-(\VM+\vm)\sin t+2(\VM-\vm)\cos(t+\alpha))\\
    % & & - (\cos(t+\alpha))(\VM - 2(\VM-\vm)\sin(t+\alpha)-(\VM+\vm)\cos t)\\
    % &=& (\VM+\vm)\sin t\sin(t+\alpha)-2(\VM-\vm)\cos(t+\alpha)\sin(t+\alpha)\\
    % & & - \VM\cos(t+\alpha) + 2(\VM-\vm)\sin(t+\alpha)\cos(t+\alpha)+
    %       (\VM+\vm)\cos t\cos(t+\alpha)\\
    &=& (\VM+\vm)\cos\alpha - \VM\cos(t+\alpha),
    \\
    \phiv(t) 
    % & = & -\px(t)    \\
     & = & - \cos(t+\alpha).
\end{eqnarray*}
The analysis of these switching functions shows that 
\begin{itemize}
  \item for $\alpha \in [\asing,2\pi-\arccos(-\VM/(\VM+\vm))]$ $\phiu$ is always negative. $\phiv$ starts with positive values and changes sign at time 
  $\Tsw{MmMm}(\alpha)=\Tsw{MmM}(\alpha)+\pi$, at which a $v$-switching occurs.
  \item for $\alpha \in [2\pi-\arccos(-\VM/(\VM+\vm)),3\pi/2]$ $\phiu$ starts with negative values and $\phiv$ with positive values. The function that has a zero first is
  $\phiu$ and there is a $u$-switching at time 
  $\Tsw{MmMP}(\alpha)=\Tsw{MmM}(\alpha) - \arcsin((\VM+\vm)/\VM\cos\alpha)$
\end{itemize}

For $\alpha \in [2\pi-\arccos(-\VM/(\VM+\vm)),3\pi/2]$, the switching curve is given by
% %NE PAS EFFACER : CAS VMAX-VMIN
% \[
%    \left\{
%    \begin{aligned}
%       \xtswc{MmM}(\alpha) &=
%           \!\begin{multlined}[t]
%           -(\VM+\vm)\cos\left(-\alpha-\asin\right) \\[.1em]
%                     +2\frac{(\VM^2-\vm^2)}{\VM}\cos\alpha
%           \end{multlined}
%           \\[.2em]
%       \ytswc{MmM}(\alpha) &= 
%           \!\begin{multlined}[t]
%               \VM - 
%               2(\VM-\vm)\cos\left(\asin\right) \\[.1em]
%               +(\VM+\vm)\sin\left(-\alpha-\asin\right).
%           \end{multlined}
%    \end{aligned}
%    \right.
% \]
\[
   \left\{
   \begin{aligned}
      \xtswc{MmM}(\alpha) &=
          \!\begin{multlined}[t]
          -(\VM+\vm)\cos\left(-\alpha-\asin\right) %\\[.1em]
                    +2\VMCmvmCVMcos
          \end{multlined}
          \\[.2em]
      \ytswc{MmM}(\alpha) &= 
          \!\begin{multlined}[t]
              \VM - 
              2(\VM-\vm)\cos\left(\asin\right) \\[.1em]
              +(\VM+\vm)\sin\left(-\alpha-\asin\right).
          \end{multlined}
   \end{aligned}
   \right.
\]

We will see later that the switching curve for the other interval of $\alpha$ is not needed (see Remark \ref{rem:no_MmMm}, page \pageref{rem:no_MmMm}).

%-----------------------------------------------------------------------------------
%
% Reduction of 3rd arcs
%
%-----------------------------------------------------------------------------------
\subsubsection{Reduction of $\traj{MPp}$, $\traj{Mm}$ and $\traj{MmM}$}

The plot of the trajectories computed previously (see left part of Fig. \ref{fig:Cut_right_before_after}) shows that $\traj{MPp}$ intersects with $\traj{Mm}$ and $\traj{MmM}$. We then have to compute when the optimality of each trajectory is lost. This computation has been made numerically 
% %NE PAS EFFACER : CAS VMAX-VMIN
% for $\vm = 1$ and $\VM = 2$ and indicates that
for $\p = 2$ and indicates that
\begin{itemize}
  \item for $\alpha \in [\atraj{MPp},\asing]$, with $\atraj{MPp} \simeq 2.19947$, the trajectories $\traj{MPp}$ loose their optimality. $\traj{MPpP}$ is then defined for $\alpha \in (\pi/2,\atraj{MPp})$.
  \item for $\alpha \in [\asing,\atraj{MmM}]$, with $\atraj{MmM} \simeq 3.84506$, the trajectories $\traj{Mm}$ or $\traj{MmM}$ loose their optimality. $\traj{MmMP}$ is then defined for $\alpha \in (\atraj{MmM},3\pi/2)$.
\end{itemize}
\begin{figure}[!ht]
  \centering
  \def\svgwidth{1\columnwidth}
  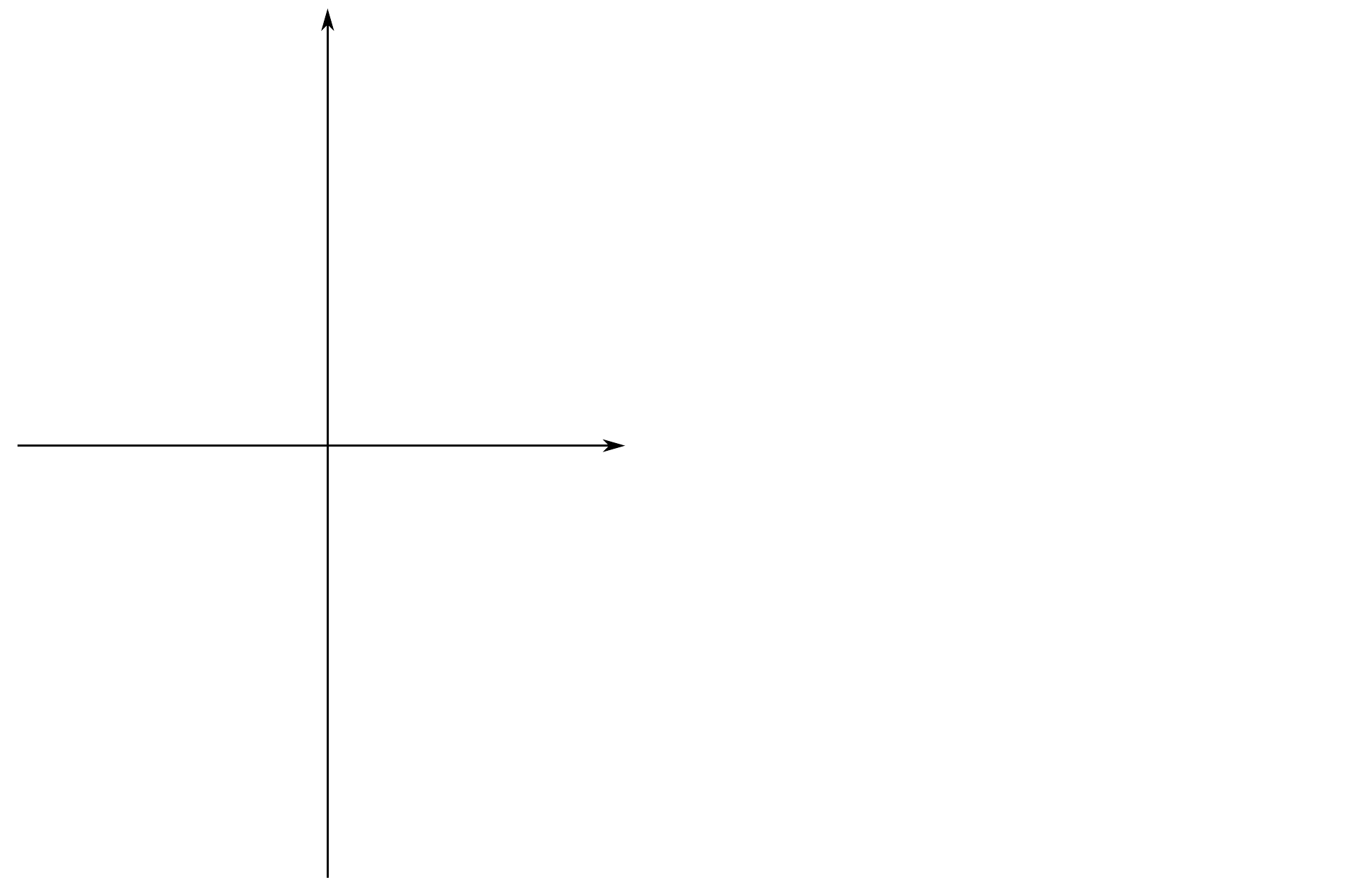
  \caption{Intersection of $\traj{MPp}$ with $\traj{Mm}$ and $\traj{MmM}$ (left) and the corresponding cut locus (right)}\label{fig:Cut_right_before_after}
\end{figure}
The times at which trajectories loose optimality, although not detailed here, allow to plot the corresponding cut locus (in green on the right part of Fig. \ref{fig:Cut_right_before_after}).
The rest of the synthesis will be restricted to the above-mentioned intervals.

%-----------------------------------------------------------------------------------
%
% 4th arc
%
%-----------------------------------------------------------------------------------
\subsection{4th arc}

%-----------------------------------------------------------------------------------
%
% Family 1
%
%-----------------------------------------------------------------------------------
\subsubsection{Family 1}

In the following, the computations are made for $\traj{MsPp}$. Computations for $\traj{MsMm}$ are  obtained similarly since these two trajectories are symmetric with respect to the $\xt$ axis.
The trajectories $\traj{MsPp}$ bifurcate at the time $\Tsw{MsPp}(\tau)=\tau+\pi/2$ from 
% %NE PAS EFFACER : CAS VMAX-VMIN
% \[
%    \left\{
%    \begin{aligned}
%       \xtswc{MsP}(\tau) &= 
%            -\VM                                                               \\
%       \ytswc{MsP}(\tau) &= 
%            -\VM -\sqrt{\vm(2\VM+\vm)}-\VM\tau + \VM \tsing.
%    \end{aligned}
%    \right.
% \]
\[
   \left\{
   \begin{aligned}
      \xtswc{MsP}(\tau) &= 
           -\VM                                                               \\
      \ytswc{MsP}(\tau) &= 
           -\VM -\sqrt{(2\VM+\vm)}-\VM\tau + \VM \tsing.
   \end{aligned}
   \right.
\]
Then for all $t\geq\Tsw{MsPp}(\tau)$ 
\[
   \left\{
   \begin{aligned}
      \xt(t) &=
            \xtswc{MsP}(\tau) \cos(t-\Tsw{MsPp}(\tau))
            -(\vm + \ytswc{MsP}(\tau))\sin(t-\Tsw{MsPp}(\tau))
      \\
      \yt(t) &= -\vm + (\vm + \ytswc{MsP}(\tau)) \cos(t-\Tsw{MsPp}(\tau))
            +\xtswc{MsP}(\tau) \sin(t-\Tsw{MsPp}(\tau)),
   \end{aligned}
   \right.
\]
which leads to
% \[
%    \left\{
%    \begin{aligned}
%       \xt(t) &=
%             -\VM \cos(t-\Tsw{MsPp}(\tau)) - \\ & 
%             (\vm-\VM -\sqrt{\vm(2\VM+\vm)}-\VM (\tau+\pi/2) + \VM \tsing)
%             \sin(t-\Tsw{MsPp}(\tau))
%       \\
%       \yt(t) &= -\vm
%             -\VM \sin(t-\Tsw{MsPp}(\tau)) + \\ & 
%             (\vm-\VM -\sqrt{\vm(2\VM+\vm)}-\VM (\tau+\pi/2) + \VM \tsing)
%             \cos(t-\Tsw{MsPp}(\tau))
%    \end{aligned}
%    \right.
% \]
% %NE PAS EFFACER : CAS VMAX-VMIN
% \[
%    \left\{
%    \begin{aligned}
%       \xt(t) &=
%             -\VM \sin(t-\tau) + \\ & 
%             (\vm-\VM -\sqrt{\vm(2\VM+\vm)}-\VM \tau + \VM \tsing)
%             \cos(t-\tau)
%       \\
%       \yt(t) &= -\vm
%             +\VM \cos(t-\tau) + \\ & 
%             (\vm-\VM -\sqrt{\vm(2\VM+\vm)}-\VM\tau + \VM \tsing)
%             \sin(t-\tau).
%    \end{aligned}
%    \right.
% \]
\[
   \left\{
   \begin{aligned}
      \xt(t) &=
            -\VM \sin(t-\tau) + \\ & 
            (\vm-\VM -\sqrt{(2\VM+\vm)}-\VM \tau + \VM \tsing)
            \cos(t-\tau)
      \\
      \yt(t) &= -\vm
            +\VM \cos(t-\tau) + \\ & 
            (\vm-\VM -\sqrt{(2\VM+\vm)}-\VM\tau + \VM \tsing)
            \sin(t-\tau).
   \end{aligned}
   \right.
\]
By continuity of the covector $\pp(\Tsw{MsPp}(\tau)) =  (0,-1)$
and the covector is
\[
   \left\{
     \begin{aligned}
       \px(t) &= \sin(t-\Tsw{MsPp}(\tau))=-\cos(t-\tau)   \\
       \py(t) &= -\cos(t-\Tsw{MsPp}(\tau))= -\sin(t-\tau) 
     \end{aligned}
   \right.,
    \quad\forall~t\geq\Tsw{MsPp}(\tau).
\]
Since the last switching was a $v$-switching from $\VM$ to $\vm$, the next switching will be a $v$-switching. Taking into account the form of $\phiv$, the switching will occur after a duration $\pi$. We conclude that $\Tsw{MsPpP}(\tau)=\Tsw{MsPp}(\tau)+\pi=\tau+3\pi/2$.\\
The plot of the trajectories shows that trajectories $\traj{MsMm}$ and $\traj{MsPp}$ will necessarily intersect before reaching their next switching curve, which, therefore does not need to be computed.\\
Moreover, since for a same $\tau$, $\traj{MsMm}(t)$ and $\traj{MsPp}(t)$ are symmetric with respect to the $\xt$ axis, they will reach the $\xt$ axis at the same time. We obtain that the set
% %NE PAS EFFACER : CAS VMAX-VMIN
% \[
% \mathcal{K}_2
% =
%   \left\{(\xt,\yt) \mid \xt > \VM^2-\vm^2+\left(\vm-\VM-\sqrt{\vm(2\VM+\vm)}\right)^2 \text{ and } \yt = 0 \right\},
% \]
\[
%\mathcal{K}_2
%=
  \left\{(\xt,\yt) \mid \xt > \VM^2-\vm+\left(\vm-\VM-\sqrt{(2\VM+\vm)}\right)^2 \text{ and } \yt = 0 \right\}
\]
forms a cut locus.
% Taking $\tau = \tsing = \pi -\arccos\left(-\frac{\VM}{\VM+\vm}\right)$ we obtain 
% \[
%    \left\{
%    \begin{aligned}
%       \xt(t) &=
%             +\VM \sin(t+\arccos\left(-\frac{\VM}{\VM+\vm}\right)) - \\ & 
%             (\vm-\VM -\sqrt{\vm(2\VM+\vm)}-\VM (\tau+\pi/2) + \VM \tsing)
%             \cos(t+\arccos\left(-\frac{\VM}{\VM+\vm}\right))
%       \\
%       \yt(t) &= -\vm
%             -\VM \cos(t+\arccos\left(-\frac{\VM}{\VM+\vm}\right)) - \\ & 
%             (\vm-\VM -\sqrt{\vm(2\VM+\vm)}-\VM (\tau+\pi/2) + \VM \tsing)
%             \sin(t+\arccos\left(-\frac{\VM}{\VM+\vm}\right))
%    \end{aligned}
%    \right.
% \]
% The parametric equation of the switching curve is then
% \[
%    \left\{
%    \begin{aligned}
%       \xtswc{MsPp}(\tau) &=
%             -\VM \cos(t-\tau-3\pi/2) - \\ & 
%             (\vm-\VM -\sqrt{\vm(2\VM+\vm)}-\VM (\tau+\pi/2) + \VM \tsing)
%             \sin(t-\tau-3\pi/2)
%       \\
%       \ytswc{MsPp}(\tau) &= -\vm
%             -\VM \sin(t-\tau-3\pi/2) + \\ & 
%             (\vm-\VM -\sqrt{\vm(2\VM+\vm)}-\VM (\tau+\pi/2) + \VM \tsing)
%             \cos(t-\tau-3\pi/2)
%    \end{aligned}
%    \right.
% \]

%-----------------------------------------------------------------------------------
%
% Family 2
%
%-----------------------------------------------------------------------------------
\subsubsection{Family 2}

Trajectories of Family 2 bifurcate from $\traj{MPp}$ at time 
$\Tsw{MPpP}(\alpha)=\pi/2-\alpha+2\acos$, with control $(1,\VM)$. Notice that after reduction of this family, we have $\alpha \in (\pi/2,\atraj{MPp})$.
$\traj{MPp}$ starts from the switching curve
% %NE PAS EFFACER : CAS VMAX-VMIN
% \[
%    \left\{
%    \begin{aligned}
%     \xtswc{MPp}(\alpha) &=  -\left(\VM+\vm\right)\cos\alpha
%    \\
%    \ytswc{MPp}(\alpha) &=  (\VM-2\vm) + (\VM+\vm)\sin\alpha
%                    -2\VM\sqrt{1-\left(\VMvmVMcos\right)^{2}},
%    \end{aligned}
%    \right.
% \]
\[
   \left\{
   \begin{aligned}
    \xtswc{MPp}(\alpha) &=  -\left(\VM+\vm\right)\cos\alpha
   \\
   \ytswc{MPp}(\alpha) &=  (\VM-2) + (\VM+\vm)\sin\alpha
                   -2\VM\sqrt{1-\left(\VMvmVMcos\right)^{2}},
   \end{aligned}
   \right.
\]
and has, for all $t\geq\Tsw{MPpP}(\alpha)$, coordinates
\[
   \left\{
   \begin{aligned}
      \xt(t) &=
          \xtswc{MPp}(\alpha) \cos(t-\Tsw{MPpP}(\alpha))-
          (\VM+\ytswc{MPp}(\alpha))\sin(t-\Tsw{MPpP}(\alpha))
      \\
      \yt(t) &=
      -\VM + (\VM+\ytswc{MPp}(\alpha))\cos(t-\Tsw{MPpP}(\alpha))+
      \xtswc{MPp}(\alpha)\sin(t-\Tsw{MPpP}(\alpha)).
   \end{aligned}
   \right.
\]
which is equivalent to
\[
   \left\{
   \begin{aligned}
      \xt(t) &= 
      \!\begin{multlined}[t]
        2(\VM-\vm)\cos\left(t+\alpha-2\acos\right) \\[.1em]
                - 2\VM\sin\left(t+\alpha-\acos\right) \\[.1em]
                +(\VM+\vm)\sin\left(t+2\alpha-2\acos\right)
      \end{multlined}
      \\[.2em]
      \yt(t) &= 
      \!\begin{multlined}[t]
         -\VM + 2(\VM-\vm)\sin\left(t+\alpha-2\acos\right) \\[.1em]
                + 2\VM\cos\left(t+\alpha-\acos\right) \\[.1em]
                -(\VM+\vm)\cos\left(t+2\alpha-2\acos\right).
      \end{multlined}          
   \end{aligned}
   \right.
\]
The covector is then
\[
   \left\{
     \begin{aligned}
       \px(t) &= -\sin(t-\Tsw{MPpP}(\alpha))   \\
       \py(t) &= \cos(t-\Tsw{MPpP}(\alpha)),%\quad\forall~t\geq\Tsw{MPpP}(\alpha),
     \end{aligned}
   \right.
\]
which leads to
\[
   \left\{
     \begin{aligned}
       \px(t) &= \cos\left(t+\alpha-2\acos\right)   \\
       \py(t) &= \sin\left(t+\alpha-2\acos\right),%\quad\forall~t\geq\Tsw{MPpP}(\alpha).
     \end{aligned}
   \right.
\]
The switching functions, defined by \eqref{eq:switching_function_u} and \eqref{eq:switching_function_v}, are then
\begin{eqnarray*}
     \phi_{u}(t) 
     % & = & \py(t)\xt(t)-\px(t)\yt(t)\\
     & = & 
     % \xtswc{MPp}(\alpha) - \VM \sin (t-\Tsw{MPpP}(\alpha)) \\
     % & = & 
     -(\VM + \vm)\cos\alpha - \VM \cos\left(t+\alpha-2\acos\right),
     \\
     \phi_{v}(t) 
     % & = & -\px(t)    \\
     & = & 
     \cos\left(t+\alpha-2\acos\right).
\end{eqnarray*}
The analysis of these switching functions for $\alpha \in (\pi/2,\atraj{MPp})$ shows that the next switching will be a $u$-switching that will occur at time
\[
\Tsw{MPpPM}(\alpha) = \Tsw{MPpP}(\alpha) - \asin  
                        = -\alpha+3\acos.
\]
The switching curve is then
% \[
%    \left\{
%    \begin{aligned}
%     \xtswc{MPpP}(\alpha) &=  \xtswc{MPp}(\alpha) \cos(\asin)
%           +(\VM+\ytswc{MPp}(\alpha))\sin(\asin)
%    \\
%    \ytswc{MPpP}(\alpha) &=  
%    -\VM + (\VM+\ytswc{MPp}(\alpha))\cos(\asin)-
%           \xtswc{MPp}(\alpha)\sin(\asin)
%    \end{aligned}
%    \right..
% \]
\[
   \left\{
   \begin{aligned}
    \xtswc{MPpP}(\alpha) &= 
            \VMvmVMcos \\ &
            \left(
            -3\VM\cos\left(\asin\right) + 2(\VM-\vm)\sin\alpha+(\VM+\vm)\sin\alpha
            \right)
   \\
   \ytswc{MPpP}(\alpha) &= 
      \!\begin{multlined}[t]    
            - 3\VM 
            + (\VM+\vm)\cos\left(\asin\right)\sin\alpha \\[.1em]
            + 3\VM\left(\VMvmVMcos\right)^2
            + 2(\VM-\vm)\cos\left(\asin\right).
      \end{multlined}
   \end{aligned}
   \right.
\]

%-----------------------------------------------------------------------------------
%
% Reduction
%
%-----------------------------------------------------------------------------------
\subsubsection{Reduction of $\traj{MPpP}$ and $\traj{MmM}$}

The simulation shows that $\traj{MPpP}$ and $\traj{MmM}$ intersect themselves. The corresponding cut locus is computed numerically and it yields:

\begin{itemize}
  \item for $\alpha \in [\atraj{MPpP},\asing]$, with $\atraj{MPpP} \simeq 2.18628$, the trajectories $\traj{MPpP}$ loose their optimality. $\traj{MPpPM}$ is then defined for $\alpha \in (\pi/2,\atraj{MPpP})$.
  \item for $\alpha \in [\asing,\atraj{MmM'}]$, with $\atraj{MmM'} \simeq 4.09691$, the trajectories $\traj{MmM}$ loose their optimality. $\traj{MmMP}$ is then defined for $\alpha \in (\atraj{MmM'},3\pi/2)$.
\end{itemize}

The remaining part of the synthesis will be restricted to the above-mentioned intervals.

\begin{remark}\label{rem:no_MmMm}
  Recall that the trajectories $\traj{MmMm}$ are defined for 
  $\alpha \in [\asing,2\pi-\arccos(-\VM/(\VM+\vm))]$.
  % %NE PAS EFFACER : CAS VMAX-VMIN
  % Since for $\vm = 1$ and $\VM = 2$,
  Since $\VM = 2$,
  $\atraj{MmM'} > 2\pi-\arccos(-\VM/(\VM+\vm))$, these arcs do not appear because $\traj{MmM}$ looses its optimality before switching to $\traj{MmMm}$.
\end{remark}

%-----------------------------------------------------------------------------------
%
% Family 3
%
%-----------------------------------------------------------------------------------
\subsubsection{Family 3}

In this subsection, we study the trajectories $\traj{MmMP}$ switching from 
$\traj{MmM}$ at a time 
$\Tsw{MmMP}(\alpha) = -\alpha+5\pi/2-\arcsin((\VM+\vm)/\VM\cos\alpha)$ with a control $(1,\VM)$ and $\alpha \in (\atraj{MmM'},3\pi/2)$. The starting points are on the parametrized curve

% %NE PAS EFFACER : CAS VMAX-VMIN
% \[
%    \left\{
%    \begin{aligned}
%       \xtswc{MmM}(\alpha) &=   
%       \!\begin{multlined}[t]
%          -(\VM+\vm)\cos\left(-\alpha-\asin\right) \\[.1em]
%           +2\frac{(\VM^2-\vm^2)}{\VM}\cos\alpha
%       \end{multlined}
%           \\[.2em]
%       \ytswc{MmM}(\alpha) &=  
%       \!\begin{multlined}[t]
%           \VM - 
%           2(\VM-\vm)\cos\left(\asin\right) \\[.1em]
%           +(\VM+\vm)\sin\left(-\alpha-\asin\right).
%       \end{multlined}
%    \end{aligned}
%    \right.
% \]
\[
   \left\{
   \begin{aligned}
      \xtswc{MmM}(\alpha) &=   
      \!\begin{multlined}[t]
         -(\VM+\vm)\cos\left(-\alpha-\asin\right) %\\[.1em]
          +2\VMCmvmCVMcos
      \end{multlined}
          \\[.2em]
      \ytswc{MmM}(\alpha) &=  
      \!\begin{multlined}[t]
          \VM - 
          2(\VM-\vm)\cos\left(\asin\right) \\[.1em]
          +(\VM+\vm)\sin\left(-\alpha-\asin\right).
      \end{multlined}
   \end{aligned}
   \right.
\]

For all $t\geq\Tsw{MmMP}(\alpha)$, the trajectory
is
\[
   \left\{
   \begin{aligned}
      \xt(t) &=
          \xtswc{MmM}(\alpha) \cos(t-\Tsw{MmMP}(\alpha))-
          (\VM+\ytswc{MmM}(\alpha))\sin(t-\Tsw{MmMP}(\alpha))
      \\[.2em]
      \yt(t) &=
      \!\begin{multlined}[t]
        -\VM + (\VM+\ytswc{MmM}(\alpha))\cos(t-\Tsw{MmMP}(\alpha)) \\[0.1em]
        +\xtswc{MmM}(\alpha)\sin(t-\Tsw{MmMP}(\alpha)).
        \end{multlined}
   \end{aligned}
   \right.
\]
By continuity of the covector we have
% \[
%    \left\{
%    \begin{aligned}
%     \px(\Tsw{MmMP}(\alpha))  &=  -\sin\left(\asin\right)\\
%     \py(\Tsw{MmMP}(\alpha))  &=  \cos\left(\asin\right)
%    \end{aligned}
%    \right..
% \]
\[
   \left\{
   \begin{aligned}
    \px(\Tsw{MmMP}(\alpha))  &=  -\left(\VMvmVMcos\right)\\
    \py(\Tsw{MmMP}(\alpha))  &=  \cos\left(\asin\right).
   \end{aligned}
   \right.
\]
The covector is then given by
% \[
%    \left\{
%    \begin{aligned}
%       \px(t)  &= 
%              -\sin(\asin)\cos(t-\Tsw{MmMP}(\alpha)) \\ &
%              -\cos(\asin)\sin(t-\Tsw{MmMP}(\alpha))
%       \\
%       \py(t)  &= 
%              \cos(\asin)\cos(t-\Tsw{MmMP}(\alpha))  \\ &
%              -\sin(\asin)\sin(t-\Tsw{MmMP}(\alpha))
%    \end{aligned}
%    \right.
% \]
\[
   \left\{
   \begin{aligned}
      \px(t)  &= 
             \cos\left(t+\alpha-2\acos\right)
      \\
      \py(t)  &=
              \sin\left(t+\alpha-2\acos\right)
   \end{aligned}.
   \right.
\]

The switching functions, defined by \eqref{eq:switching_function_u} and \eqref{eq:switching_function_v}, are then
\[
\begin{aligned}
    \phiu(t)  
    % &= \py(t)\xt(t)-\px(t)\yt(t) \\
            %   &=            
            %   \left(\cos(\asin)\cos(t-\Tsw{MmMP}(\alpha))\right.   \\ &
            %  \left.-\sin(\asin)\sin(t-\Tsw{MmMP}(\alpha))\right)  \\ &
            % \left(\xtswc{MmM}(\alpha) \cos(t-\Tsw{MmMP}(\alpha))-
            % (\VM+\ytswc{MmM}(\alpha))\sin(t-\Tsw{MmMP}(\alpha))\right) \\
            % & +
            %  \left(\sin(\asin)\cos(t-\Tsw{MmMP}(\alpha))\right. \\ &
            %  \left.+\cos(\asin)\sin(t-\Tsw{MmMP}(\alpha))\right) \\ &
            %  \left(-\VM 
            %  + (\VM+\ytswc{MmM}(\alpha))\cos(t-\Tsw{MmMP}(\alpha))\right.
            %  \left.+ \xtswc{MmM}(\alpha)\sin(t-\Tsw{MmMP}(\alpha))\right) \\
             &= 
        \!\begin{multlined}[t]
             \cos\left(\asin\right)\xtswc{MmM}(\alpha) 
             + \left(\VMvmVMcos\right)
                (\VM+\ytswc{MmM}(\alpha)) \\[.1em]
             - \VM \cos\left(t+\alpha-2\acos\right),
        \end{multlined}            
    \\
    \phiv(t) 
    % & = -\px(t)    \\
     & = -\cos\left(t+\alpha-2\acos\right).
\end{aligned}
\]

The analysis of these switching functions shows that the next switching will be a 
$v$-switching that will occur at time 
\[
  \begin{aligned}
  \Tsw{MmMPp}(\alpha)   &= -\alpha +5\pi/2-2\asin \\
                        &= \Tsw{MmMP}(\alpha) - \asin.
  \end{aligned}
\]

This switching time leads to the switching curve:
\[
   \left\{
   \begin{aligned}
      \xtswc{MmMP}(\alpha) &=
          \xtswc{MmM}(\alpha) \cos\left(\asin\right)\\ &
          -(\VM+\ytswc{MmM}(\alpha))\left(\VMvmVMcos\right)
      \\
      \ytswc{MmMP}(\alpha) &=
        -\VM + (\VM+\ytswc{MmM}(\alpha))\cos\left(\asin\right) \\&
        +\xtswc{MmM}(\alpha)\left(\VMvmVMcos\right).
   \end{aligned}
   \right.
\]

%-----------------------------------------------------------------------------------
%
% Family 2
%
%-----------------------------------------------------------------------------------
\subsubsection{Family 2}

In this subsection we study the trajectories $\traj{MPpPM}$ switching from 
$\traj{MPpP}$ at time $\Tsw{MPpPM} = -\alpha +3 \acos$ with control 
$(-1,\VM)$ and $\alpha \in (\pi/2,\atraj{MPpP})$. We recall
\[
   \left\{
   \begin{aligned}
    \xtswc{MPpP}(\alpha) &= 
            \VMvmVMcos \\ &
            \left(
            -3\VM\cos\left(\asin\right) + 2(\VM-\vm)\sin\alpha+(\VM+\vm)\sin\alpha
            \right)
   \\
   \ytswc{MPpP}(\alpha) &= 
      \!\begin{multlined}[t]    
            - 3\VM 
            + (\VM+\vm)\cos\left(\asin\right)\sin\alpha \\[.1em]
            + 3\VM\left(\VMvmVMcos\right)^2
            + 2(\VM-\vm)\cos\left(\asin\right).
      \end{multlined}
   \end{aligned}
   \right.
\]
The trajectories are then
\[
   \left\{
   \begin{aligned}
      \xt(t) &=
          \xtswc{MPpP}(\alpha) \cos(t-\Tsw{MPpPM}(\alpha))
          -(\VM-\ytswc{MPpP}(\alpha))\sin(t-\Tsw{MPpPM}(\alpha))
      \\[.2em]
      \yt(t) &=
      \!\begin{multlined}[t]
      \VM - (\VM-\ytswc{MPpP}(\alpha))\cos(t-\Tsw{MPpPM}(\alpha)) \\[.1em]
      -\xtswc{MPpP}(\alpha)\sin(t-\Tsw{MPpPM}(\alpha)).
      \end{multlined}
   \end{aligned}
   \right.
\]
The covector satisfies
% \[
%    \left\{
%      \begin{aligned}
%        \px(\Tsw{MPpPM}(\alpha)) &= \cos(\acos)   \\
%        \py(\Tsw{MPpPM}(\alpha)) &= \sin(\acos) 
%      \end{aligned}
%    \right.,
% \]
\[
   \left\{
     \begin{aligned}
       \px(\Tsw{MPpPM}(\alpha)) &= \left(\VMvmVMcos\right)   \\
       \py(\Tsw{MPpPM}(\alpha)) &= \sin\left(\acos\right),
     \end{aligned}
   \right.
\]
which leads to
% \[
%    \left\{
%    \begin{aligned}
%       \px(t)  &= 
%              \cos(\acos)\cos(t-\Tsw{MPpPM}(\alpha)) \\ &
%              +\sin(\acos)\sin(t-\Tsw{MPpPM}(\alpha))
%       \\
%       \py(t)  &= 
%              \sin(\acos)\cos(t-\Tsw{MPpPM}(\alpha))  \\ &
%              -\cos(\acos)\sin(t-\Tsw{MPpPM}(\alpha))
%    \end{aligned}
%    \right..
% \]

\[
   \left\{
   \begin{aligned}
      \px(t)  &= 
             \cos\left(t+\alpha-4\acos\right) \\
      \\
      \py(t)  &= 
             -\sin\left(t+\alpha-4\acos\right).
   \end{aligned}
   \right.
\]

We have then the switching functions, defined by \eqref{eq:switching_function_u} and \eqref{eq:switching_function_v},
\[
   \begin{aligned}
     \phi_{u}(t)  
     % &=& \py(t)\xt(t)-\px(t)\yt(t) \\
      %             &=& (\sin(\acos)\cos(t-\Tsw{MPpPM}(\alpha))
      %        -\cos(\acos)\sin(t-\Tsw{MPpPM}(\alpha))) \\ & &
      %             (\xtswc{MPpP}(\alpha) \cos(t-\Tsw{MPpPM}(\alpha))
      %     -(\VM-\ytswc{MPpP}(\alpha))\sin(t-\Tsw{MPpPM}(\alpha))) \\ & &
      %     -(\cos(\acos)\cos(t-\Tsw{MPpPM}(\alpha)) 
      %       +\sin(\acos)\sin(t-\Tsw{MPpPM}(\alpha))) \\ & &
      %     (\VM - (\VM-\ytswc{MPpP}(\alpha))\cos(t-\Tsw{MPpPM}(\alpha))
      % -\xtswc{MPpP}(\alpha)\sin(t-\Tsw{MPpPM}(\alpha))) \\
      &=
      \!\begin{multlined}[t]
      \xtswc{MPpP}(\alpha)\sqrt{1-\left(\frac{\VM + \vm}{\VM}\cos\alpha\right)^2} + 
      (\VM-\ytswc{MPpP}(\alpha))\frac{\VM + \vm}{\VM}\cos\alpha
      \\
      - \VM \cos\left(t+\alpha-4\acos\right),
      \end{multlined}
     \\
     \phi_{v}(t) 
     % & = & -\px(t)    \\
     & =
     -\cos\left(t+\alpha-4\acos\right).
   \end{aligned}
\]
The analysis of these switching functions shows that the next switching will be a 
$v$-switching and will occur at time 
\[
  \begin{aligned}
    \Tsw{MPpPMm}(\alpha)  &= \Tsw{MPpPM}(\alpha)-\asin \\
                          &=  -\alpha -\frac{\pi}{2} +4 \acos.
  \end{aligned}
\]
The switching curve is then
\[
   \left\{
   \begin{aligned}
    \xtswc{MPpPM}(\alpha) &=  
            \xtswc{MPpP}(\alpha) \sqrtacos
          +(\VM-\ytswc{MPpP}(\alpha))\VMvmVMcos
   \\[.2em]
   \ytswc{MPpPM}(\alpha) &= 
   \!\begin{multlined}[t]
         \VM - (\VM-\ytswc{MPpP}(\alpha))\sqrtacos \\[.1em]
        +\xtswc{MPpP}(\alpha)\VMvmVMcos.
   \end{multlined}
   \end{aligned}
   \right.
\]

%-----------------------------------------------------------------------------------
%
% Reduction of the trajectories
%
%-----------------------------------------------------------------------------------
\subsubsection{Reduction of $\traj{MPpP}$ and $\traj{MPpPM}$}

The simulation shows that $\traj{MPpP}$ and $\traj{MPpPM}$ intersect themselves. Notice that this is a particular case because these trajectories belong to the same family (see e.g. Fig. \ref{fig:Cut_left_before}). The corresponding cut locus is computed numerically and it yields that for 
$\alpha \in [\atraj{MPpPM},\atraj{MPpP})$, with $\atraj{MPpPM} = 2.13033$, %aCut3Right1211 
the trajectories $\traj{MPpP}$ loose their optimality. 
$\traj{MPpPMm}$ is then defined for $\alpha \in (\pi/2,\atraj{MPpPM})$.\\

The rest of the synthesis will be restricted to the above-mentioned intervals.

%-----------------------------------------------------------------------------------
%
% 5th arc:
%
%-----------------------------------------------------------------------------------
\subsection{5th arc:}

%-----------------------------------------------------------------------------------
%
% Family 3
%
%-----------------------------------------------------------------------------------
\subsubsection{Family 3}

In this subsection we study the trajectories $\traj{MmMPp}$ switching from 
$\traj{MmMP}$ at time $\Tsw{MmMPp} =-\alpha +5\pi/2-2\asin$ with control 
$(1,\vm)$ and $\alpha \in (\atraj{MmM'},3\pi/2)$. We recall that these trajectories start from

\[
   \left\{
   \begin{aligned}
      \xtswc{MmMP}(\alpha) &=
      \!\begin{multlined}[t]
          \xtswc{MmM}(\alpha) \cos\left(\asin\right) \\[.1em]
          -(\VM+\ytswc{MmM}(\alpha))\left(\frac{\VM + \vm}{\VM}\cos\alpha\right)
      \end{multlined}
      \\[.2em]
      \ytswc{MmMP}(\alpha) &=
      \!\begin{multlined}[t]
        -\VM + (\VM+\ytswc{MmM}(\alpha))\cos\left(\asin\right) \\[.1em]
        +\xtswc{MmM}(\alpha)\left(\frac{\VM + \vm}{\VM}\cos\alpha\right),
      \end{multlined}
   \end{aligned}
   \right.
\]

For all $t \geq \Tsw{MmMPp}(\alpha)$, the trajectories are
\[
   \left\{
   \begin{aligned}
      \xt(t) &=
      \!\begin{multlined}[t]
          \xtswc{MmMP}(\alpha) \cos(t-\Tsw{MmMPp}(\alpha)) \\[.1em]
          -(\vm+\ytswc{MmMP}(\alpha))\sin(t-\Tsw{MmMPp}(\alpha))
      \end{multlined}
      \\[.2em]
      \yt(t) &=
      \!\begin{multlined}[t]
        -\vm + (\VM+\ytswc{MmMP}(\alpha))\cos(t-\Tsw{MmMPp}(\alpha)) \\[.1em]
        +\xtswc{MmMP}(\alpha)\sin(t-\Tsw{MmMPp}(\alpha)),
      \end{multlined}
   \end{aligned}
   \right.
\]

% We have
% \[
%    \left\{
%    \begin{aligned}
%     \px(\Tsw{MmMPp}(\alpha))  &=  0\\
%     \py(\Tsw{MmMPp}(\alpha))  &=  -1.
%    \end{aligned}
%    \right.
% \]

% and the covector is given by
% \[
%    \left\{
%    \begin{aligned}
%       \px(t)  &= \sin(t-\Tsw{MmMPp}(\alpha))
%       \\
%       \py(t)  &= -\cos(t-\Tsw{MmMPp}(\alpha))
%    \end{aligned}
%    \right.
% \]

Since the previous switching occurred on $v$, the next switching will also be on $v$ and will occur after a time $\pi$ (following the same reasoning as previously).\\
At this step of the construction of the synthesis, we immediately see that the next switching will not occur.  Indeed, these trajectories will necessarily intersect with trajectories found earlier in the construction of the synthesis before reaching their switching curve.

%-----------------------------------------------------------------------------------
%
% Family 2
%
%-----------------------------------------------------------------------------------
\subsubsection{Family 2}

In this subsection we study the trajectories $\traj{MPpPMm}$ switching from 
$\traj{MPpPM}$ at time $\Tsw{MPpPMm} = -\alpha -\frac{\pi}{2} +4 \acos$ with control 
$(-1,\vm)$ and $\alpha \in (\pi/2,\atraj{MPpPM})$.

The trajectories are
\[
   \left\{
   \begin{aligned}
      \xt(t) &=
      \!\begin{multlined}[t]
          \xtswc{MPpPM}(\alpha) \cos(t-\Tsw{MPpPMm}(\alpha))\\[.1em]
          -(\vm-\ytswc{MPpPM}(\alpha))\sin(t-\Tsw{MPpPMm}(\alpha))
          \end{multlined}
      \\[.2em]
      \yt(t) &=
      \!\begin{multlined}[t]
      \vm - (\vm-\ytswc{MPpPM}(\alpha))\cos(t-\Tsw{MPpPMm}(\alpha)) \\[.1em]
      -\xtswc{MPpPM}(\alpha)\sin(t-\Tsw{MPpPMm}(\alpha)).
      \end{multlined}
   \end{aligned}
   \right.
\]

Since the previous switching occurred on $v$, the next switching will also be on $v$ and will occur after a time $\pi$ (following the same reasoning as previously).\\
At this step of the construction of the synthesis, we immediately see that the next switching will not occur.  Indeed, these trajectories will necessarily intersect with trajectories found earlier in the construction of the synthesis before reaching their switching curve.

%-----------------------------------------------------------------------------------
%
% Last reduction
%
%-----------------------------------------------------------------------------------
\subsubsection{Reduction of $\traj{MmMP}$, $\traj{MmMPp}$, $\traj{MPpPM}$ and $\traj{MPpPMm}$}

The simulation shows that $\traj{MPpPMm}$ intersects with $\traj{MmMP}$, $\traj{MmMPp}$ and $\traj{MPpPM}$ (see Fig. \ref{fig:Cut_left_before}). The corresponding cut locus is computed numerically.

It also appears that $\traj{MmMPp}$ intersects with the abnormal trajectory $\traj{m}$. \startmodif
Clearly, $\traj{MmMPp}$ looses
\stopmodif its optimality at this intersection. A part of the abnormal trajectory is then a cut locus (in cyan on Fig. \ref{fig:Final-synthesis}).

\begin{figure}[!ht]
\centering
\def\svgwidth{0.5\columnwidth}
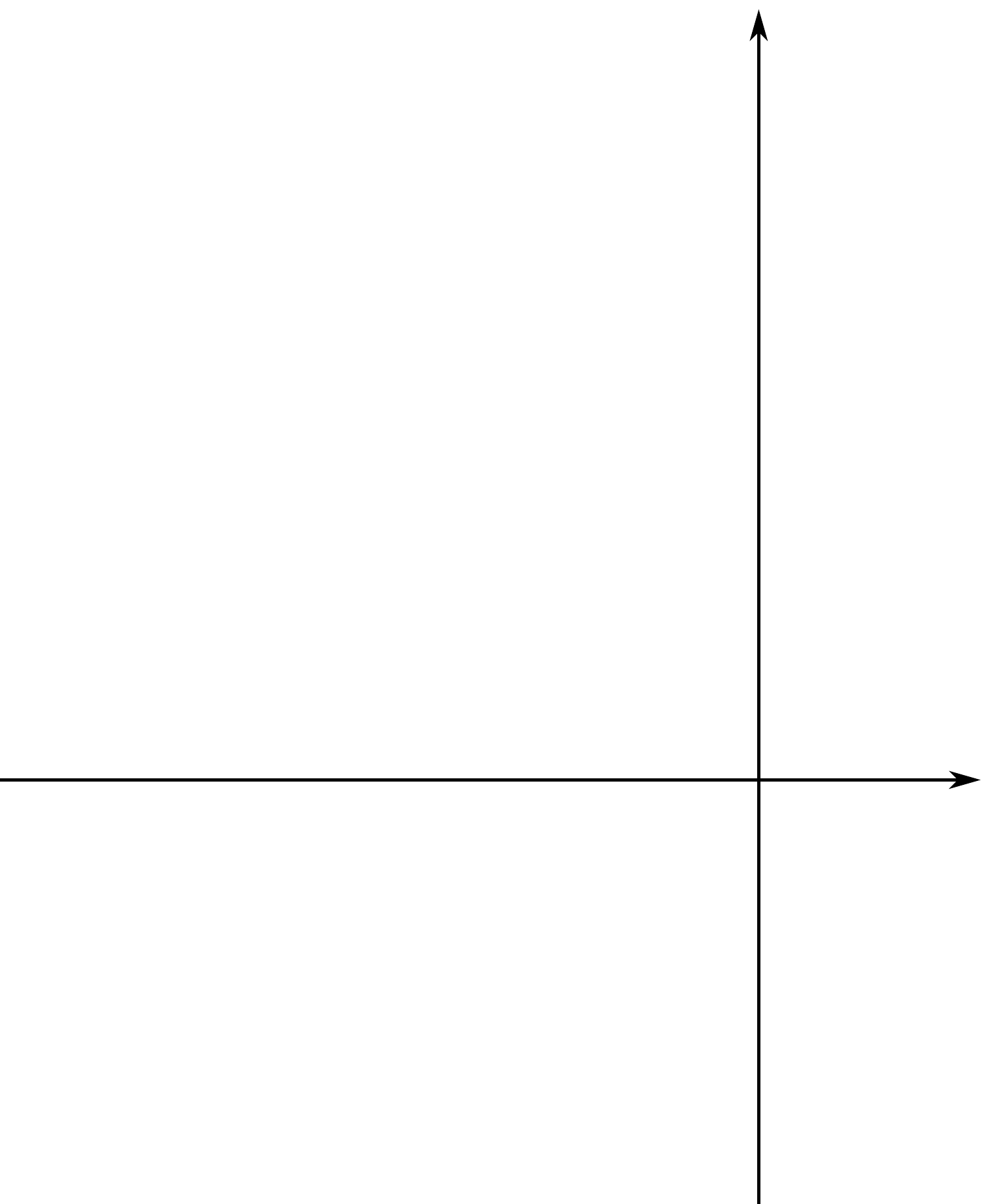
\caption{Intersection of $\traj{MPpPMm}$ with $\traj{MmMP}$, $\traj{MmMPp}$ and $\traj{MPpPM}$ \label{fig:Cut_left_before}}
\end{figure}

Since no new trajectories can be generated, the construction of the synthesis stops.

%%%%%% END Rest of Synthesis %%%%%%%%%

%-----------------------------------------------------------------------------------
%-----------------------------------------------------------------------------------
%
% Bibliography
%
%-----------------------------------------------------------------------------------
%-----------------------------------------------------------------------------------
\bibliographystyle{AIMS}
\bibliography{./biblio} % ENDROIT DE LA BIBLIOGRAPHIE DE CET ARTICLE

\providecommand{\href}[2]{#2}
\providecommand{\arxiv}[1]{\href{http://arxiv.org/abs/#1}{arXiv:#1}}
\providecommand{\url}[1]{\texttt{#1}}
\providecommand{\urlprefix}{URL }
\begin{thebibliography}{10}

\bibitem{Agrachev-Sachkov-2004}
\newblock A.~A. Agrachev and Y.~L. Sachkov,
\newblock \emph{Control theory from the geometric viewpoint}, vol.~87 of
  Encyclopaedia of Mathematical Sciences,
\newblock Springer-Verlag, Berlin, 2004,
\newblock \urlprefix\url{http://dx.doi.org/10.1007/978-3-662-06404-7},
\newblock Control Theory and Optimization, II.

\bibitem{BalluchiCDC2000}
\newblock A.~Balluchi, A.~Bicchi, B.~Piccoli and P.~Sou\`{e}res,
\newblock {Stability and robustness of optimal synthesis for route tracking by
  Dubins' Vehicules},
\newblock in \emph{Decision and Control (CDC), 2000 IEEE 39th Annual Conference
  on}, 2000,
\newblock 581--586.

\bibitem{BoscainChambrionCharlot2005}
\newblock U.~Boscain, T.~Chambrion and G.~Charlot,
\newblock Nonisotropic 3-level quantum systems: complete solutions for minimum
  time and minimum energy,
\newblock \emph{Discrete Contin. Dyn. Syst. Ser. B}, \textbf{5} (2005),
  957--990 (electronic),
\newblock \urlprefix\url{http://dx.doi.org/10.3934/dcdsb.2005.5.957}.

\bibitem{Boscain2014b}
\newblock U.~Boscain, F.~Gr{\"o}nberg, R.~Long and H.~Rabitz,
\newblock Minimal time trajectories for two-level quantum systems with two
  bounded controls,
\newblock \emph{J. Math. Phys.}, \textbf{55} (2014), 062106, 25,
\newblock \urlprefix\url{http://dx.doi.org/10.1063/1.4882158}.

\bibitem{BoscainPiccoli2001}
\newblock U.~Boscain and B.~Piccoli,
\newblock Extremal synthesis for generic planar systems,
\newblock \emph{J. Dynam. Control Systems}, \textbf{7} (2001), 209--258,
\newblock \urlprefix\url{http://dx.doi.org/10.1023/A:1013003204923}.

\bibitem{BoscainPiccoli2004_BOOK}
\newblock U.~Boscain and B.~Piccoli,
\newblock \emph{Optimal syntheses for control systems on 2-{D} manifolds},
  vol.~43 of Math\'ematiques \& Applications (Berlin) [Mathematics \&
  Applications],
\newblock Springer-Verlag, Berlin, 2004.

\bibitem{Boscain2014}
\newblock U.~Boscain and B.~Piccoli,
\newblock \emph{Encyclopedia of Systems and Control}, chapter Synthesis Theory
  in Optimal Control, 1--11,
\newblock Springer London, London, 2014,
\newblock \urlprefix\url{http://dx.doi.org/10.1007/978-1-4471-5102-9_50-1}.

\bibitem{BressanPiccoli1998}
\newblock A.~Bressan and B.~Piccoli,
\newblock A generic classification of time-optimal planar stabilizing
  feedbacks,
\newblock \emph{SIAM J. Control Optim.}, \textbf{36} (1998), 12--32
  (electronic),
\newblock \urlprefix\url{http://dx.doi.org/10.1137/S0363012995291117}.

\bibitem{Dubins1957}
\newblock L.~E. Dubins,
\newblock On curves of minimal length with a constraint on average curvature,
  and with prescribed initial and terminal positions and tangents,
\newblock \emph{Amer. J. Math.}, \textbf{79} (1957), 497--516.

\bibitem{LagacheSerresAndrieuCDC15}
\newblock M.~A. Lagache, U.~Serres and V.~Andrieu,
\newblock Time minimum synthesis for a kinematic drone model,
\newblock in \emph{2015 54th IEEE Conference on Decision and Control (CDC)},
  2015,
\newblock 4067--4072.

\bibitem{MaillotBoscainGauthierSerres2015}
\newblock T.~Maillot, U.~Boscain, J.-P. Gauthier and U.~Serres,
\newblock Lyapunov and {M}inimum-{T}ime {P}ath {P}lanning for {D}rones,
\newblock \emph{J. Dyn. Control Syst.}, \textbf{21} (2015), 47--80,
\newblock \urlprefix\url{http://dx.doi.org/10.1007/s10883-014-9222-y}.

\bibitem{Piccoli1996}
\newblock B.~Piccoli,
\newblock Classification of generic singularities for the planar time-optimal
  synthesis,
\newblock \emph{SIAM J. Control Optim.}, \textbf{34} (1996), 1914--1946,
\newblock \urlprefix\url{http://dx.doi.org/10.1137/S0363012993256149}.

\bibitem{PiccoliSussmann2000_SIAM}
\newblock B.~Piccoli and H.~Sussmann,
\newblock Regular synthesis and sufficiency conditions for optimality,
\newblock \emph{SIAM J. Control Optim.}, \textbf{39} (2000), 359--410
  (electronic),
\newblock \urlprefix\url{http://dx.doi.org/10.1137/S0363012999322031}.

\bibitem{SoueresBalluchiBicchi2001}
\newblock P.~Sou{\`e}res, A.~Balluchi and A.~Bicchi,
\newblock Optimal feedback control for route tracking with a bounded-curvature
  vehicle,
\newblock \emph{Internat. J. Control}, \textbf{74} (2001), 1009--1019,
\newblock \urlprefix\url{http://dx.doi.org/10.1080/00207170110052211}.

\bibitem{Sussmann1987_REG-SYN}
\newblock H.~Sussmann,
\newblock Regular synthesis for time-optimal control of single-input real
  analytic systems in the plane,
\newblock \emph{SIAM J. Control Optim.}, \textbf{25} (1987), 1145--1162,
\newblock \urlprefix\url{http://dx.doi.org/10.1137/0325062}.

\end{thebibliography}

\medskip
% The data information below will be filled by AIMS editorial staff
Received xxxx 20xx; revised xxxx 20xx.
\medskip

\end{document}